\newtheorem{prop}{Proposition}[section]
\newtheorem{theo}[prop]{Theorem}
\newtheorem{lem}[prop]{Lemma}
\newtheorem{cor}[prop]{Corollary}
\newtheorem{defi}[prop]{Definition}
\def\ZZ{{\mathbb Z}}
\def\RR{{\mathbb R}}
\def\CC{{\mathbb C}}
\begin{document}
\title{On the existence of conic K\"{a}hler-Einstein metrics}
\author{Gang $\text{Tian}^{*}$, Feng $\text{Wang}^{\dag}$}
\thanks { $*$ Partially supported by NSFC Grants 11331001.}
\thanks {  $\dag$  Partially supported by NSFC Grants 11501501 and
the Fundamental Research Funds for the Central Universities 2018QNA3001.}
\maketitle

\section{Introduction}
The Yau-Tian-Donaldson conjecture has been solved for Fano manifolds:
{\it If $M$ is a $K$-polystable Fano manifold, then it admits a K\"{a}hler-Einstein metric}.
Its first proof uses the continuity method through conic K\"ahler-Einstein metrics with conic angles along a smooth divisor. subsequently, there are proofs by Aubin's continuity method, the K\"ahler-Ricci flow as well as variational method.

Let us recall conic K\"ahler metrics along a smooth divisor.
First, we have the following model conic metric on $\mathbb{C}^n$:

$$\omega_{\beta}\,=\,\sqrt{-1}\,\left(\frac{dz_1 \wedge d\bar{z}_1}{|z_1|^{2-2\beta}}\,+\,\sum_{i= 2}^n\, dz_i\wedge d\bar{z}_i\right).$$

\begin{defi}
A conic K\"ahler metric $\omega$ with angle $2\pi\beta$ along a smooth divisor $D\subset M$ is simply a K\"ahler metric
$\omega$ on $M\backslash D$ such that for each $p\in D$, there is a local coordinate chart $U\subset \mathbb{C}^n$ around $p$ with local coordinates $z_1,\cdots,z_n$
such that
$\omega$ is equivalent to the model metric $\omega_\beta$ on $U$.
\end{defi}

In local coordinates $z_1,\cdots,z_n$, a conic metric $\omega$ is given by
$$\omega\,=\, \sqrt{-1}\,\sum_{i,j=1}^n \,g_{i\bar j}\,dz_i\wedge d\bar z_j,$$
then its Ricci curvature outside $D$ is given by
$$ {\rm{Ric}}(\omega)\,=\,-\,\sqrt{-1}\,\partial\bar\partial \log\det(g_{i\bar j}).$$
Using this formula, we can deduce that in the sense of currents,
$${\rm{Ric}}(\omega)\,=\,2\pi (1-\beta)[D]\,+\,\varphi, $$
where $\varphi$ is a ``nice'' (1,1)-form in certain sense, e.g., it has locally bounded potentials.

\begin{defi}
We say that $\omega$ is a conic K\"ahler-Einstein metric if
$${\rm{Ric}}(\omega)\,=\,\mu \omega\,+\,2\pi(1-\beta)\,[D],$$
in the sense of currents.
\end{defi}

If $M$ is a Fano manifold, then there exists a smooth divisor $ D\in |{-\lambda}K_M|$ for some large integer $\lambda>0$. Consider the following equation:
\begin{align}\label{conic1}
 {\rm{Ric}}(\omega)\,=\,\mu \omega\,+\,2\pi(1-\beta)\,[D],
\end{align}
where $\mu+(1-\beta)\lambda\,=\,1$. In the course of proving the Yau-Tian-Donaldson conjecture for Fano manifolds, one also showed the solvability of \eqref{conic1} for log-K-stable $(M, (1-\beta)D)$ (cf. \cite{Ti15}). The log K-stability was introduced in \cite{Li11} and extends the notion of K-stability introduced in \cite{Ti97}.

A natural question is to consider the existence of conic K\"{a}hler-Einstein metric on general log Fano manifolds. A
log Fano manifold is a tuple $(M, \sum_{i=1}^k (1-\beta^i) \,D_i)$, where $M$ is a K\"{a}hler manifold and $D_i$ are normal crossing smooth divisors such that $L$ is an ample $\mathbb{Q}$-line bundle, where $\beta_i\in(0,1)$ are rational numbers and
$$L\,=\,K_M^{-1}\,-\,\sum_{i=1}^k\,(1-\beta^i)\,D_i.$$
For simplicity, we often write $D$ for $\sum_{i=1}^k\,(1-\beta^i)\,D_i$ and $(M,D)$ for $(M, \sum_{i=1}^k (1-\beta^i) \,D_i)$.

A metric $\omega$ is called a conic K\"ahler metric if it is a smooth K\"ahler metric outside the support $|D|$ of $D$ and for each point $p\in |D|$ where $|D|$ is locally defined by the equation $z_1\cdots z_d = 0$ for some local coordinates $z_1,...,z_n$, the metric $\omega$ satisfies
$$ C^{-1}\,\omega_{cone}\, \leq\, \omega \,\leq \,C\,\omega_{cone},$$
where $C$ is a positive constant and $\omega_{cone}$ is the model conic metric with cone angles $2\pi\beta_i$ along $\{z_i = 0\}$, that is,
$$\omega_{cone}\,=\,\sum_{i=1}^d \,\sqrt{-1}\,\frac{dz_i\wedge d \bar z_i}{|z_i|^{2(1-\beta_i)}}\,+\,\sum_{k=d+1}^n \,\sqrt{-1}\,dz_i\wedge d \bar z_i.$$

A conic K\"ahler metric is called a conic K\"{a}hler-Einstein metric if it satisfies:
$${\rm Ric}(\omega)\,=\,\omega\,+\,2\pi\,D,$$
where $D= \sum_{i=1}^k\,(1-\beta^i)\,D_i$. It is proved in \cite{Ber} that if $(M,D)$ admits a conic K\"{a}hler-Einstein metric, then it is log-$K$-polystable as defined in \cite{Li11}. So we want to prove the converse, that is, the log $K$-polystability is also sufficient for the existence of conic K\"{a}ler-Einstein metrics with conic angle $2\pi \beta_i$ along each $D_i$.

One of our motivations comes from the study of the existence of K\"{a}hler-Einstein metrics on $K$-polystable singular Fano variety.
Such an existence problem can be reduced to studying the existence and compactness of general conic K\"{a}ler-Einstein metrics.
Let $X$ be a $\mathbb{Q}$-Fano variety. Assume that there exists a log resolution $\mu: M\rightarrow X$ such that
\[
K_M\,=\,\mu^* K_X\,+\,\sum_i\, a_i E_i
\]
with $a_i\in (-1, 0]$. So we have
\[
K_M^{-1}\,=\,\mu^*K_X^{-1}\,+\,\sum_i\, b_i E_i
\]
with $b_i\in [0, 1)$. Then
\[
L_\epsilon\,=\,\mu^*K_X^{-1}\,-\,\sum_i \epsilon_i E_i\,=\,K_M^{-1}\,-\,\sum_i\, (b_i\,+\,\epsilon_i)\, E_i
\]
is an ample $\mathbb{Q}$-line bundle for suitable small rational numbers $\epsilon_i$. If there is a K\"{a}hler-Einstein metric $\omega$ on $X$, then $\mu^*\omega$ will be a degenerate conic K\"{a}hler-Einstein metric with cone angle $2\pi(1-b_i)$ along $E_i$ in its normal direction:
$${\rm Ric}( \mu^*\omega)\,=\,\mu^*\omega\,+\,2\pi\sum_i\, b_i\,E_i.$$
So for a $K$-stable $\mathbb{Q}$-Fano variety with such a resolution, we first expect that $\mu^*\omega$ can be perturbed to obtain a sequence of
conic K\"{a}hler Einstein metrics on $M$:
$${\rm Ric}( \omega_\epsilon)\,=\,\omega_\epsilon\,+\,2\pi\sum_i \,(b_i+\epsilon)\,E_i,~~~[\omega_\epsilon]\,=\,2\pi \,c_1(L_\epsilon).$$
Then we expect that the Gromov-Hausdorff limit of $(M,\omega_\epsilon)$ coincides with the K\"{a}hler-Einstein metric on $X$. This is indeed true and we can turn around to use this to construct a K\"ahler-Einstein metric on $X$.

Our main theorem of this paper is the following
\begin{theo}\label{main}
If $(M,\Sigma_{i=1}^k(1-\beta^i)D_i)$ is log K-polystable, then there exists a conic K\"{a}hler-Einstein metric with angle $2\pi \beta^i$ along $D_i$.
\end{theo}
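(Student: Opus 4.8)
The plan is to prove existence by a continuity method in the cone angles, combined with a partial $C^0$ estimate and Gromov--Hausdorff convergence to close the method. I would keep the divisors $D_i$ fixed and deform the angle vector along a path $t\mapsto(\beta^1(t),\dots,\beta^k(t))$ inside the rational region where $L=K_M^{-1}-\sum_i(1-\beta^i(t))D_i$ remains an ample $\mathbb{Q}$-line bundle, choosing the starting point to have all angles so small that the associated log canonical threshold (log $\alpha$-invariant) of $(M,D)$ exceeds the dimensional bound $n/(n+1)$; there a solution exists by a Jeffres--Mazzeo--Rubinstein / Aubin--Yau type argument, since the relevant energy is proper. Let $S$ be the set of angle vectors for which the equation $\mathrm{Ric}(\omega)=\omega+2\pi\sum_i(1-\beta^i)D_i$ admits a conic Kähler--Einstein metric in the class $2\pi c_1(L)$; the aim is to show that $S$ contains the target angle vector, and for this I would prove that $S$ is nonempty, open, and closed.

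For openness I would linearize in Donaldson's conic Hölder spaces $C^{2,\alpha,\beta}$ adapted to the normal crossing cone model $\omega_{cone}$, in which the complex Monge--Ampère operator is elliptic and its linearization at a solution is $\Delta_\omega+1$. This operator is an isomorphism whenever there are no nontrivial holomorphic vector fields tangent to $|D|$, giving openness by the implicit function theorem; at a log-K-polystable solution its kernel is exactly the Lie algebra of the reductive automorphism group $\mathrm{Aut}(M,D)$, so I would invoke a Matsushima-type decomposition and solve the deformation problem transverse to the $\mathrm{Aut}(M,D)$-orbit. The genuinely difficult half is closedness, namely preventing the conic metrics from degenerating before the target angle is reached.

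For closedness I would first note that the equation itself forces a uniform Ricci lower bound $\mathrm{Ric}(\omega_t)\ge\omega_t$ away from $|D|$, so by Bishop--Gromov the diameters are bounded and $(M,\omega_t)$ is precompact in the Gromov--Hausdorff topology. The central analytic input is then a partial $C^0$ estimate: a uniform lower bound for the Bergman kernels $\rho_m(\omega_t)$ of high powers of $L$ along the path. Establishing this requires the Cheeger--Colding--Tian structure theory for the Gromov--Hausdorff limits, which is subtler here than in the single smooth divisor case, because at the intersection loci of the $D_i$ the tangent cones are products of two-dimensional cones, so the metric cone analysis and the $\epsilon$-regularity must be carried out for these normal crossing models while tracking the limiting positions of the divisors. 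Granting the partial $C^0$ estimate, the metrics embed uniformly into a fixed projective space, and one extracts a limiting normally crossing pair $(X_\infty,D_\infty)$, a $\mathbb{Q}$-Fano variety carrying a weak conic Kähler--Einstein metric.

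Finally I would close the argument by contradiction. If $S$ failed to reach the target angle, the degeneration of $(M,D)$ to $(X_\infty,D_\infty)$ would produce a nontrivial special log test configuration whose log-Futaki invariant is nonpositive, and vanishes only for a product configuration. Log-K-polystability then forces $(X_\infty,D_\infty)\cong(M,D)$, so no degeneration occurs, the convergence is smooth up to the conic structure, and the estimates extend to the endpoint; hence $S$ is closed and the target angle is attained, yielding the desired conic Kähler--Einstein metric. I expect the principal obstacle to be precisely the partial $C^0$ estimate together with the structure theory of the limits along the normal crossing strata, since that is exactly where the passage from a geometric degeneration to an algebraic log-destabilizer must be made rigorous.
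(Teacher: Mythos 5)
Your closedness machinery (uniform Ricci lower bound, Gromov--Hausdorff precompactness, partial $C^0$ estimate via Cheeger--Colding--Tian theory along the normal crossing strata, and the final contradiction with log $K$-polystability through a log test configuration extracted from the degeneration) is essentially the content of the paper's Sections 3--5, and that part of the plan is sound. The genuine gap is in the choice of continuity path and, specifically, its starting point. You propose to deform the angle vector $(\beta^1(t),\dots,\beta^k(t))$ itself, starting from ``all angles so small that the log canonical threshold exceeds $n/(n+1)$''. This has the monotonicity backwards: shrinking the angles means increasing the coefficients $1-\beta^i$ toward $1$, which makes the pair $(M,\sum_i(1-\beta^i)D_i)$ \emph{more} singular, so the log $\alpha$-invariant decreases rather than increases. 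Moreover the polarization $L_t=K_M^{-1}-\sum_i(1-\beta^i(t))D_i$ moves along your path, and there is no reason that it stays ample near the small-angle endpoint $K_M^{-1}-\sum_i D_i$, since the $D_i$ here are arbitrary divisors, not assumed proportional to $K_M^{-1}$ as in the Chen--Donaldson--Sun anticanonical setting. Finally, unlike that setting, the coefficient of $\omega$ in ${\rm Ric}(\omega)=\omega+\sum_i(1-\beta^i(t))[D_i]$ is identically $1$ along your path, so there is no small Ricci parameter at the start to make the problem easy. Nonemptiness of your set $S$ is therefore unproven, and plausibly false for the path as described.

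The paper takes a different path designed to avoid exactly this: it keeps the angles along the $D_i$ fixed, picks an auxiliary smooth divisor $E\in|\lambda L|$, and runs ${\rm Ric}(\omega_t)=t\,\omega_t+\frac{1-t}{\lambda}[E]+(1-\beta)[D]$ with $[\omega_t]=2\pi c_1(L)$ fixed for all $t$. Because $E$ is proportional to $L$, the class never moves; for small $t$ the Ding functional $F_t$ is coercive by a simple H\"older inequality (the exponent $-t\phi$ is small), which simultaneously provides the starting point and gives $Aut^0(M,E+D)=1$, so openness follows without any Matsushima-type transversality argument or a linear theory in conic H\"older spaces for normal crossing divisors (which is itself not off-the-shelf). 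The price of this path is that the cone angle along $E$ tends to $2\pi$ as $t\to 1$, which is precisely why the paper's compactness sections must handle limits in which some angles approach $1$ while others stay bounded away from $1$. To salvage your angle-deformation path you would need to exhibit a connected set of admissible angle vectors joining the target to a provably solvable point inside the ample cone, and justify solvability there by something other than the small-angle $\alpha$-invariant claim.
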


We will follow the approach in \cite{Ti15} to prove Theorem \ref{main}. At first, we approximate the conic K\"{a}hler-Einstein metric by smooth metrics with Ricci curvature bounded from below. If each $D_i$ is semi-ample, then this approximation has been done in \cite{S} by extending the method introduced in \cite{Ti15} which uses the semi-ampleness of $D_i$ in a crucial way. However, in our case, the conic divisors $D_i$ may not be semi-ample, so we need to use a variational method to do the approximation. With the approximation, we can apply Cheeger-Colding's theory and Cheeger-Colding-Tian's theory. Secondly, adapting the arguments in \cite{Ti15} to our case here, we establish the partial $C^0$ estimate. In our new situation here, not all the angles approach $2\pi$ as one had in the smooth case, this causes some problems of extending certain arguments from the smooth case to our more general cases. At last, we prove the existence of conic K\"{a}hler-Einstein metric by using log $K$-polystability.

\section{Smoothing conic K\"{a}hler-Einstein metric}

In this section, we prove an approximation theorem for general conic K\"ahler metrics. Our theorem extends the main result in \cite{S}
under the assumption of semi-ampleness of the conic divisors.

Let $(M,D)$ be a log Fano manifold and $\omega$ be a conic K\"{a}hler-Einstein metric, where
$D\,=\,\sum_{i=1}^k\,(1-\beta^i)\,D_i$ and $\beta^i\in (0,1)$.

We say that $\omega$ has a $K$-approximation if there is a sequence of smooth metrics $\omega_i=\omega+\sqrt{-1}\partial\bar{\partial}\,\phi_i$
satisfying:

\vskip 0.1in
\noindent
i) $\phi_i$ converges to $0$ uniformly in $M$ and smoothly outside the support $|D|$ of $D$;

\vskip 0.1in
\noindent
ii) ${\rm Ric}( \omega_i)\,\geq\, K\,\omega_i$;

\vskip 0.1in
\noindent
iii) $(M,\omega_i)$ converges to $(M,\omega)$ in the Gromov-Hausdorff topology.
\vskip 0.1in

\begin{theo}\label{ricci}
Assume that $Aut^0(M,D)=1$ and $(1-K_i)\,L+(1-\beta^i)\,D_i$ are all semi-positive for some $K_i\leq 1$ ($1\leq i\leq k$),
then $\omega$ has a $K$-approximation,
where $K=\Sigma (K_i-1)+1$.
\end{theo}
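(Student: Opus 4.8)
The plan is to realize the approximants $\omega_\epsilon$ as solutions of a regularized complex Monge-Amp\`ere equation, obtained by an energy-minimization (variational) argument since the $D_i$ need not be semi-ample. Fix a smooth reference form $\omega_0\in 2\pi c_1(L)$, a smooth volume form $\Omega$ whose Ricci form is the curvature of the induced metric on $K_M^{-1}$, and smooth Hermitian metrics $h_i$ on $\mathcal{O}(D_i)$ with defining sections $s_i$ and curvatures $\theta_i\in 2\pi c_1(D_i)$, chosen compatibly so that ${\rm Ric}(\Omega)-\sum_i(1-\beta^i)\theta_i=\omega_0$. Writing $\omega=\omega_0+\sqrt{-1}\partial\bar\partial\psi$, the conic K\"ahler-Einstein equation ${\rm Ric}(\omega)=\omega+2\pi D$ is then equivalent to
$$(\omega_0+\sqrt{-1}\partial\bar\partial\psi)^n=e^{-\psi}\,\Omega\,\prod_{i=1}^k|s_i|_{h_i}^{-2(1-\beta^i)}.$$
I would regularize by replacing each singular weight $|s_i|_{h_i}^{-2(1-\beta^i)}$ with the smooth weight $(|s_i|_{h_i}^2+\epsilon^2)^{-(1-\beta^i)}$ and seek smooth solutions $\omega_\epsilon=\omega_0+\sqrt{-1}\partial\bar\partial\psi_\epsilon$; equivalently $\omega_\epsilon$ solves the twisted equation ${\rm Ric}(\omega_\epsilon)=\omega_\epsilon+\sum_i(1-\beta^i)\theta_{i,\epsilon}$ with $\theta_{i,\epsilon}=\theta_i+\sqrt{-1}\partial\bar\partial\log(|s_i|_{h_i}^2+\epsilon^2)$, a smooth form in $2\pi c_1(D_i)$ which, crucially, carries no sign.

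Existence of the smooth $\omega_\epsilon$ is the place where \cite{S} used semi-ampleness; lacking it, the continuity method may lose its a priori estimate, so I would instead produce $\omega_\epsilon$ \emph{variationally}, as the minimizer of the Ding functional $\mathcal D_\epsilon(\psi)=-E(\psi)-\log\int_M e^{-\psi}\,\Omega\prod_i(|s_i|_{h_i}^2+\epsilon^2)^{-(1-\beta^i)}$ on the space of finite-energy potentials. Its minimizers solve the equation, and are smooth by Ko\l odziej's $L^\infty$ estimate followed by Evans-Krylov and Schauder, since the regularized right-hand side is smooth and strictly positive. Properness of $\mathcal D_\epsilon$ gives a minimizer, and here the hypotheses enter: the existence of the conic K\"ahler-Einstein metric $\omega$ together with $Aut^0(M,D)=1$ forces the limiting conic Ding functional $\mathcal D_0$ to be proper, and since the regularized weights increase monotonically to the singular weight as $\epsilon\downarrow0$ one has $\mathcal D_\epsilon\geq\mathcal D_0$, so the $\mathcal D_\epsilon$ are uniformly proper. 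This yields minimizers $\psi_\epsilon$ with uniform energy bounds, hence uniform $L^\infty$ bounds.

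For the lower Ricci bound (ii) I would compute ${\rm Ric}(\omega_\epsilon)$ directly. Using $\sqrt{-1}\partial\bar\partial\log|s_i|_{h_i}^2=-\theta_i$ off $D_i$, a short computation gives the identity, valid as smooth forms on all of $M$,
$$\theta_{i,\epsilon}=\frac{\epsilon^2}{|s_i|_{h_i}^2+\epsilon^2}\,\theta_i+\frac{\epsilon^2}{|s_i|_{h_i}^2\,(|s_i|_{h_i}^2+\epsilon^2)^2}\,\sqrt{-1}\,\partial|s_i|_{h_i}^2\wedge\bar\partial|s_i|_{h_i}^2,$$
in which the gradient term is non-negative and supplies exactly the cone behaviour in the normal direction to $D_i$, whence
$${\rm Ric}(\omega_\epsilon)=\omega_\epsilon+\sum_i(1-\beta^i)\theta_{i,\epsilon}\ \geq\ \omega_\epsilon+\sum_i\frac{\epsilon^2}{|s_i|_{h_i}^2+\epsilon^2}\,(1-\beta^i)\theta_i.$$
The semi-positivity hypothesis controls the remaining signed term: choosing $h_i$ so that the representative of $(1-K_i)L+(1-\beta^i)D_i$ is semi-positive gives $(1-\beta^i)\theta_i\geq(K_i-1)\omega_0$, and since $0<\tfrac{\epsilon^2}{|s_i|_{h_i}^2+\epsilon^2}\leq1$ and $K_i-1\leq0$ one is reduced to the metric comparison $\omega_\epsilon\geq\tfrac{\epsilon^2}{|s_i|_{h_i}^2+\epsilon^2}\,\omega_0$, which yields ${\rm Ric}(\omega_\epsilon)\geq K\omega_\epsilon$ with $K=\sum_i(K_i-1)+1$. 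I expect this comparison to be the main obstacle: it is the precise place where the failure of nef-ness of the $D_i$ must be absorbed, and it has to be established from the structure of $\omega_\epsilon$ near $|D|$ (where the correction concentrates and $\omega_\epsilon$ is cone-like) together with the uniform estimates, whereas away from $|D|$ the factor $\tfrac{\epsilon^2}{|s_i|_{h_i}^2+\epsilon^2}$ is small. When the $D_i$ are nef one may take $\theta_i\geq0$, so $K_i=1$, $K=1$, and no comparison is needed, recovering the semi-ample case of \cite{S}.

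Finally, conditions (i) and (iii) follow from the uniform estimates. The uniform $L^\infty$ bound on $\psi_\epsilon$, the uniqueness of the conic K\"ahler-Einstein potential (again from $Aut^0(M,D)=1$), and the stability of solutions of degenerate complex Monge-Amp\`ere equations give $\phi_\epsilon=\psi_\epsilon-\psi\to0$ uniformly on $M$; away from $|D|$ the equation is uniformly elliptic with smoothly convergent data, so interior Schauder estimates upgrade this to smooth convergence, proving (i). For (iii), the uniform bound ${\rm Ric}(\omega_\epsilon)\geq K\omega_\epsilon$ together with the fixed total volume $\int_M\omega_\epsilon^n=\int_M\omega_0^n$ and a diameter bound place the family $(M,\omega_\epsilon)$ in the framework of Gromov's compactness and Cheeger-Colding theory; the smooth convergence off $|D|$ identifies every Gromov-Hausdorff limit with the metric completion of $(M\setminus|D|,\omega)$, namely $(M,\omega)$, a volume/capacity argument near $|D|$ ruling out loss of mass. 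This produces the desired $K$-approximation, adapting the approach of \cite{Ti15}.
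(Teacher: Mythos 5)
Your overall architecture matches the paper's: a variational construction of minimizers of a regularized Ding functional (coercive because $\mathcal D_\epsilon\ge\mathcal D_0$ and $\mathcal D_0$ is coercive by Darvas--Rubinstein under $Aut^0(M,D)=1$), regularity via $L^\infty$/H\"older estimates plus Guenancia--P\u{a}un and Evans--Krylov, smooth convergence off $|D|$, and Cheeger--Colding for the Gromov--Hausdorff statement. But there is a genuine gap exactly where you flag "the main obstacle," and it is not a technical point to be absorbed later: with your regularization $(|s_i|_{h_i}^2+\epsilon^2)^{-(1-\beta^i)}$ the desired bound ${\rm Ric}(\omega_\epsilon)\ge K\omega_\epsilon$ reduces to $\omega_\epsilon\ge\frac{\epsilon^2}{|s_i|_{h_i}^2+\epsilon^2}\,\omega_0$, which near $D_i$ (where $|s_i|_{h_i}^2\ll\epsilon^2$) amounts to $\omega_\epsilon\ge(1-o(1))\,\omega_0$. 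This is not available: the Chern--Lu/maximum-principle argument only yields $\omega_\epsilon\ge C^{-1}\omega_0$ with $C$ depending on $\|\phi_\epsilon\|_{C^0}$ and the curvature of $\omega_0$, which would degrade the constant from $K$ to something like $1+C\sum_i(K_i-1)$; worse, Chern--Lu itself is run using the Ricci lower bound you are trying to establish, so the argument is circular as stated.

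The paper avoids this entirely by making the regularizing term depend on the unknown potential: the perturbed equation uses $\bigl(|S_i|_{h_i}^2+\delta\,e^{-\frac{1-K_i}{1-\beta^i}\phi_\delta}\bigr)^{-(1-\beta^i)}$. Factoring out $e^{-\frac{1-K_i}{1-\beta^i}\phi_\delta}$ turns this into $(|S_i|^2_{h_i^\delta}+\delta)^{-(1-\beta^i)}$ for the twisted metric $h_i^\delta=h_i\,e^{\frac{1-K_i}{1-\beta^i}\phi_\delta}$, whose curvature carries an extra $-\frac{1-K_i}{1-\beta^i}\sqrt{-1}\partial\bar\partial\phi_\delta$. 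In the Ricci computation this extra piece recombines with $\omega_0$ so that the error organizes as
\begin{equation*}
{\rm Ric}(\omega_\delta)\;\ge\;\omega_\delta\;+\;\sum_{i=1}^k\frac{\delta\bigl[(1-\beta^i)R(h_i)+(1-K_i)\omega_0\bigr]}{|S_i|^2_{h_i^\delta}+\delta}\;+\;\sum_{i=1}^k(K_i-1)\,\frac{\delta}{|S_i|^2_{h_i^\delta}+\delta}\,\omega_\delta,
\end{equation*}
where the first sum is nonnegative by the semi-positivity hypothesis and the second is bounded below by $\sum_i(K_i-1)\,\omega_\delta$ simply because $K_i-1\le0$ and $0<\delta/(|S_i|^2_{h_i^\delta}+\delta)\le1$ --- no comparison between $\omega_\delta$ and $\omega_0$ is ever needed, and the sharp constant $K=\sum_i(K_i-1)+1$ falls out. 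To repair your proof you should replace the fixed $\epsilon^2$ by this $\phi$-dependent term (which also forces the normalizing constant $\lambda$ and the constraint set $\mathcal H$ in the variational problem, since the functional is no longer the naive Ding functional); the rest of your outline then goes through essentially as in the paper.
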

It is easy to see that the condition in Theorem \ref{ricci} is necessary when $k=1$. Now we fix a smooth metric $\omega_0\in 2\pi\, c_1(L)$.
For each $i$, since
$(1-K_i)L+(1-\beta^i)D_i$ is semi-positive, there is a smooth Hermitian metric $h_{i}$ on $[D_i]$ such that
$$(1-K_i)\,\omega_0\,+\,(1-\beta^i)\,R(h_i)\,\geq \,0,$$
where $R(h_i)$ is the curvature form of $h_{i}$.
Choose a smooth volume form $\Omega$ such that
$${\rm Ric}\, (\Omega)\,=\,\omega_0\,+\,\sum_{i=1}^k\,(1-\beta^i)\,R({h_{i}}),~~~~ \int _M \,\Omega\,=\,\int_M\,\omega_0^n\,=\,V.$$
Then the conic K\"ahler-Einstein metric $\omega$ can be written as $\omega_0+\sqrt{-1}\,\partial\bar{\partial}\,\phi_{KE}$ for some function $\phi_{KE}$ satisfying:
\begin{align}\label{conic}
(\omega_0\,+\,\sqrt{-1}\,\partial\bar{\partial}\,\phi_{KE})^n\,
=\,e^{-\phi_{KE}}\,\left(\prod_{i=1}^k\,|S_i|_{h_{i}}^{-2(1-\beta^i)}\right)\,\Omega,
\end{align}
where $S_i$ is a defining section of $D_i$. Consider the following perturbed equation:
\begin{align}\label{app}
(\omega_0\,+\,\sqrt{-1}\,\partial\bar{\partial}\,\phi_\delta)^n\,=\,\lambda\, e^{-\phi_\delta}\,\left(\prod_{i=1}^k\,\left(|S_i|_{h_i}^2\,+\,\delta \, e^{-\frac{1-K_i}{1-\beta^i}\,\phi_\delta}\right)^{-(1-\beta^i)} \right)\,\Omega.
\end{align}
where $\lambda$ is some positive constant which may depend on $\delta$.
\begin{prop}
There are constants $a,b,\delta_0 > 0$ depending only on $(M,\omega_0), \Omega, S_i, h_{i})$ such that for any $\delta\leq\delta_0$,
(\ref{app}) has a solution
$\omega_\delta$ with some $\lambda\in [a,b]$.
\end{prop}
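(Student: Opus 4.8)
The plan is to regard \eqref{app} as a deformation of the conic K\"ahler-Einstein equation \eqref{conic} and to solve it by the continuity method in the regularization parameter $\delta$. Write $c_i=\frac{1-K_i}{1-\beta^i}\ge0$, and for $\delta\in[0,\delta_0]$ let $\lambda=\lambda(\delta)$ be determined by integrating \eqref{app}: since $\int_M(\omega_0+\sqrt{-1}\partial\bar\partial\phi_\delta)^n=\int_M\omega_0^n=V$, one has
\begin{align*}
\lambda(\delta)=\frac{V}{\displaystyle\int_M e^{-\phi_\delta}\prod_{i=1}^k\big(|S_i|_{h_i}^2+\delta\,e^{-c_i\phi_\delta}\big)^{-(1-\beta^i)}\,\Omega}.
\end{align*}
At $\delta=0$ equation \eqref{app} is exactly \eqref{conic}, so it is solved by the given potential $\phi_{KE}$ with $\lambda(0)=1$. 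Setting $I=\{\delta\in[0,\delta_0]:\eqref{app}\text{ is solvable}\}$, I would show that $I$ is nonempty, open and closed, hence all of $[0,\delta_0]$.

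Openness is the implicit function theorem. For $\delta>0$ the right-hand side is smooth and strictly positive, so the equation is a genuine uniformly elliptic complex Monge-Amp\`ere equation; its linearization at a solution is $\Delta_{\omega_{\phi_\delta}}+\big(1-\sum_i(1-K_i)\theta_i\big)$ with $\theta_i=\frac{\delta e^{-c_i\phi_\delta}}{|S_i|_{h_i}^2+\delta e^{-c_i\phi_\delta}}\in[0,1]$, whose zeroth-order coefficient lies in $[K,1]$ because $\sum_i(1-K_i)=1-K$. The assumption $Aut^0(M,D)=1$ rules out a nontrivial kernel, so this operator is invertible and solutions persist under small changes of $\delta$ (with $\lambda(\delta)$ varying continuously); at the singular endpoint $\delta=0$ the same step is carried out in the conic H\"older spaces adapted to $\omega$.

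Closedness rests on a priori estimates that are \emph{uniform in} $\delta$. The crux is the $C^0$ bound. First, since each $\beta^i\in(0,1)$, the function $\prod_i|S_i|_{h_i}^{-2(1-\beta^i)}$ lies in $L^p(\Omega)$ for some $p>1$, and because $\big(|S_i|_{h_i}^2+\delta e^{-c_i\phi_\delta}\big)^{-(1-\beta^i)}\le|S_i|_{h_i}^{-2(1-\beta^i)}$, Kolodziej's $L^\infty$ estimate bounds the oscillation of $\phi_\delta$ uniformly in $\delta$. To pin the level of $\phi_\delta$ I would compare with $\phi_{KE}$: applying the maximum principle to $\phi_\delta-\phi_{KE}$ on $M\setminus|D|$ and using that $\big(|S_i|_{h_i}^2+\delta e^{-c_i\phi_\delta}\big)^{-(1-\beta^i)}$ increases monotonically to $|S_i|_{h_i}^{-2(1-\beta^i)}$ as $\delta\downarrow0$ forces $\|\phi_\delta\|_{C^0}\le C$ with $C$ independent of $\delta$. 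The Laplacian estimate then follows from an Aubin-Yau/Chern-Lu inequality, in which the semi-positivity of $(1-K_i)L+(1-\beta^i)D_i$ (encoded in the choice of $h_i$ and $\Omega$) provides the crucial nonnegative curvature term; Evans-Krylov and Schauder theory upgrade this to uniform $C^{k,\alpha}$ bounds on every compact subset of $M\setminus|D|$.

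Finally, the bound $\lambda\in[a,b]$ is immediate from the displayed formula for $\lambda(\delta)$: the uniform $C^0$ bound makes $e^{-\phi_\delta}$ two-sidedly bounded, while by monotone convergence the denominator tends to $\int_M e^{-\phi_{KE}}\prod_i|S_i|_{h_i}^{-2(1-\beta^i)}\Omega\in(0,\infty)$ as $\delta\downarrow0$, so it stays in a fixed compact subinterval of $(0,\infty)$ for all $\delta\le\delta_0$. The main obstacle is precisely the uniform $C^0$ estimate as $\delta\to0$: the right-hand side degenerates to the genuinely singular weights $|S_i|^{-2(1-\beta^i)}$ along divisors that need not be semi-ample, so one cannot smooth them away by a global semi-positive perturbation as in \cite{S}, and the comparison with $\phi_{KE}$ must be executed carefully near $|D|$, where the maximum principle does not apply directly and one must instead work on $M\setminus|D|$ together with a barrier/truncation controlling the behavior across the divisor.
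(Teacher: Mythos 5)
Your proposal takes a genuinely different route from the paper --- a continuity method in the regularization parameter $\delta$ --- and it has gaps that I do not think can be repaired along the lines you sketch. The paper instead solves \eqref{app} for each fixed $\delta$ \emph{directly} by a variational argument: it minimizes the functional $F_\delta$ over the constrained class $\mathcal{H}\subset\mathcal{E}^1(M,\omega_0)$, gets coercivity for free from the pointwise inequality $F_\delta\ge F_0$ together with the Darvas--Rubinstein theorem (coercivity of the log Ding functional $F_0$, which uses exactly the hypotheses ``conic KE exists'' and $Aut^0(M,D)=1$), extracts a minimizer by weak compactness in $\mathcal{E}^1$ and lower semicontinuity of $J$, identifies it as a critical point \`a la Berman--Boucksom--Guedj--Zeriahi, and then bounds the Lagrange multiplier $\lambda$ from both sides by elementary measure estimates using the constraint $\int_M h(e^{-\phi})\,\Omega=\int_M h(1)\,\Omega$ and the uniform bound on $\int_M|\phi_j|\,\omega_0^n$. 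Note that the proposition only asserts existence of a solution (regularity and convergence as $\delta\to 0$ are separate, later lemmas); your scheme tries to produce a smooth solution in one stroke and thereby imports all of the hard analysis into this single step.

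The concrete gaps: (1) Openness at $\delta=0$ is not a routine implicit function theorem. For $\delta>0$ the ratio of the right-hand sides of \eqref{app} and \eqref{conic} is $\prod_i\bigl(1+\delta e^{-c_i\phi}/|S_i|_{h_i}^2\bigr)^{-(1-\beta^i)}$, which tends to $0$ along $|D|$ for every fixed $\delta>0$; hence the perturbation is not small in any (weighted conic) H\"older norm as $\delta\to0$, and the solutions jump from conic at $\delta=0$ to smooth for $\delta>0$. This discontinuity is the whole content of the smoothing theorem and cannot be dispatched by linearizing at $\phi_{KE}$. (2) The uniform $C^0$ estimate by comparison with $\phi_{KE}$ does not close: at a maximum point of $\phi_\delta-\phi_{KE}$ the Monge--Amp\`ere comparison yields
\begin{equation*}
\phi_{KE}-\phi_\delta\;\le\;-\log\lambda\;+\;\sum_{i=1}^k(1-\beta^i)\,\log\Bigl(1+\delta\,e^{-c_i\phi_\delta}/|S_i|_{h_i}^2\Bigr),
\end{equation*}
whose right-hand side is unbounded near $|D|$, and the maximum point can approach $|D|$ (where $\phi_{KE}$ is not smooth), so the maximum principle gives nothing uniform; you flag this as ``the main obstacle'' but offer no mechanism to overcome it, whereas the paper's uniform control comes from the energy bound $J(\phi_j)\le C$ supplied by coercivity, which feeds Zeriahi's uniform Skoda estimate and then Ko\l odziej. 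Your appeal to Ko\l odziej is likewise circular without that input, since it presupposes a uniform $L^p$ bound on $e^{-\phi_\delta}$. (3) Your bound on $\lambda$ invokes convergence of $\phi_\delta$ to $\phi_{KE}$ as $\delta\downarrow0$, which is Proposition \ref{cc} of the paper and is proved only \emph{after} this proposition; within the proof of the present statement it is not available.
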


We will use the variational method to prove this proposition. Following \cite{BEGZ}, we define
$$PSH_{full}\,=\,\{\phi\in PSH(M,\omega_0)\,|\,\lim_{j\rightarrow \infty}\int_{\phi\,\le\, -j}\,(\omega_0\,+\,\sqrt{-1}\,\partial\bar{\partial}\,\max\{\phi,-j\})^n\,=\,0\}$$
and
$$\mathcal{E}^1(M,\omega_0)\,=\,\{\phi\in PSH_{full}(M,\omega_0)\,|\,\phi\,\in\, L^1(\omega_\phi)\}.$$
The topology on these spaces are all weak topology, i.e. the $L^1$-topology.

It follows from Lemma 6.4 in \cite{BBGZ}
\begin{lem}\label{lemm:bbgz-1}
The map $$\mathcal{E}^1(M,\omega_0)\rightarrow L^1(M,\omega_0):\phi\rightarrow e^{-\phi}$$
is continuous.
\end{lem}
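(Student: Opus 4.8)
The plan is to prove sequential continuity: given $\phi_j\to\phi$ in the $L^1(M,\omega_0)$-topology with all $\phi_j,\phi\in\mathcal{E}^1(M,\omega_0)$, I want $e^{-\phi_j}\to e^{-\phi}$ in $L^1(M,\omega_0)$. First I would reduce to a normalized, almost-everywhere-convergent situation. The suprema $\sup_M\phi_j$ form a bounded sequence: they are bounded above by Hartogs' lemma (since $\limsup_j\sup_M\phi_j\le\sup_M\phi<\infty$) and below by the mean value $V^{-1}\int_M\phi_j\,\omega_0^n\to V^{-1}\int_M\phi\,\omega_0^n>-\infty$. Since to prove $L^1$-convergence of the images it is enough to show that every subsequence admits a further subsequence converging in $L^1$, I may pass to a subsequence along which $s_j:=\sup_M\phi_j\to s$ and along which $\phi_j\to\phi$ almost everywhere. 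Writing $\phi_j=s_j+\tilde\phi_j$ with $\sup_M\tilde\phi_j=0$, the prefactor $e^{-s_j}\to e^{-s}$ converges, so it suffices to treat the normalized family $\{\tilde\phi_j\}$, for which $e^{-\tilde\phi_j}\to e^{-\tilde\phi}$ almost everywhere. Because $(M,\omega_0^n)$ is a finite measure space, the Vitali convergence theorem then reduces the whole statement to a single point: the uniform integrability of the family $\{e^{-\phi_j}\}$.

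Two of the three ingredients are routine. The target is well defined, i.e. $e^{-\phi}\in L^1(M,\omega_0)$ for each $\phi\in\mathcal{E}^1$: finite-energy potentials have identically vanishing Lelong numbers, so Skoda's integrability theorem gives $e^{-\gamma\phi}\in L^1$ for every $\gamma>0$. Fatou's lemma applied to the nonnegative, a.e.-convergent sequence $e^{-\phi_j}$ yields immediately the one-sided bound $\int_M e^{-\phi}\,\omega_0^n\le\liminf_j\int_M e^{-\phi_j}\,\omega_0^n$. The entire difficulty is concentrated in the opposite direction, namely controlling the mass of $e^{-\phi_j}$ over the region where $\phi_j$ is very negative, so that no mass of $e^{-\phi_j}\,\omega_0^n$ escapes to $\{\phi=-\infty\}$ in the limit. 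This is precisely uniform integrability, and it is the substance of Lemma 6.4 in \cite{BBGZ}.

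To establish the uniform integrability I would use the finite-energy structure in an essential way, which is where the hypothesis $\phi_j\in\mathcal{E}^1$, rather than merely $\phi_j\in PSH(M,\omega_0)$, enters. It is instructive that the two monotone cases are elementary: if $\phi_j$ decreases to $\phi$ then $e^{-\phi_j}$ increases to $e^{-\phi}$ and monotone convergence gives $\int_M e^{-\phi_j}\,\omega_0^n\uparrow\int_M e^{-\phi}\,\omega_0^n<\infty$, which with a.e. convergence forces $L^1$-convergence; while if $\phi_j$ increases to $\phi$ then $e^{-\phi_j}\le e^{-\phi_1}\in L^1$ and dominated convergence applies. The difficulty is thus isolated in the general case, where the potentials may plunge towards $-\infty$ along the sequence: the pointwise infimum of the $\phi_j$ is not $\omega_0$-plurisubharmonic, so no lower envelope bounds $e^{-\phi_j}$ from above, and the soft monotonicity arguments break down. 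Here I would invoke the de la Vallée-Poussin criterion, for which it suffices to produce a convex increasing $G$ with $G(t)/t\to\infty$ and $\sup_j\int_M G\big(e^{-\phi_j}\big)\,\omega_0^n<\infty$; the prototype is a uniform Skoda-type exponential bound $\int_M e^{-(1+\varepsilon)\phi_j}\,\omega_0^n\le C$ over the normalized finite-energy family, which gives $G(t)=t^{1+\varepsilon}$. I expect this last uniform estimate to be the main obstacle: ruling out concentration of the measures $e^{-\phi_j}\,\omega_0^n$ is a genuinely quantitative statement about the singularities of finite-energy potentials that cannot be extracted from the $L^1$-topology alone, and it is exactly the input imported from \cite{BBGZ} (ultimately Skoda-type exponential integrability made uniform over the finite-energy class). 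Once this uniform integrability is in hand, combining it with the a.e. convergence of the first step yields the desired $L^1$-convergence via Vitali's theorem, completing the proof.
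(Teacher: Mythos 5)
Your proposal is correct and takes essentially the same route as the paper, which disposes of this lemma simply by citing Lemma 6.4 in \cite{BBGZ}: your reduction (Hartogs plus mean-value normalization, passage to a.e.-convergent subsequences, Vitali together with the de la Vall\'ee-Poussin criterion) followed by a uniform Skoda-type exponential bound is precisely the argument given there, where the bound $\sup_j\int_M e^{-(1+\varepsilon)\phi_j}\,\omega_0^n<\infty$ comes from Zeriahi's uniform version of Skoda's theorem applied to the $L^1$-compact family $\{\phi_j\}\cup\{\phi\}$, whose Lelong numbers vanish since finite-energy potentials have zero Lelong numbers. As the paper offers no detail beyond the citation, your sketch is if anything more complete, and the single step you leave unproved is exactly the input that \cite{BBGZ} supplies.
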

Now we define
$$\mathcal{H}\,=\,\{\phi\,\in \,\mathcal{E}^1(M,\omega_0)\,|\,\int_M \,h(e^{-\phi})\,\Omega\,=\,\int_M \,h(1)\,\Omega\},$$
where
$$h(x)\,=\,\int_0^x\,\prod_{i=1}^k\,\left(|S_i|_{h_{i}}^2\,+\,\delta\, t^{\frac{1-K_i}{1-\beta^i}}\right)^{-(1-\beta^i)}\,dt.$$
By Lemma \ref{lemm:bbgz-1}, $\mathcal{H}$ is a closed subset of $\mathcal{E}^1(M,\omega_0)$.

We have two functionals on $\mathcal{H}$:
$$J(\phi)\,=\,\frac{1}{V}\,\int_M\,\phi\,\omega_0^n\,-\,\frac{1}{(n+1)V}\,\sum_{i=0}^n\,\int_M\,\phi\,\omega_0^i\wedge\omega_\phi^{n-i},$$
$$F_\delta(\phi)\,=\,J(\phi)\,-\,\frac{1}{V}\,\int_M\,\phi\,\omega_0^n\,-\,\log \left(\int_M \,h(e^{-\phi})\,\Omega\right).$$
It is easy to see that
$$F_\delta(\phi)\,=\,J(\phi)\,-\,\frac{1}{V}\,\int_M\,\phi\,\omega_0^n\,+\,F_\delta(0),~~~~ F_\delta(0)\,=\,-\log \int_M \,h(1)\,\Omega.$$
For $\delta\leq 1$, $F_\delta(0)$ is uniformly bounded by a constant depending only on $(M,\omega_0, \Omega, S_i, h_i)$.
\begin{lem}\label{lsc}
$J(\phi)$ is lower semi-continuous on $\mathcal{H}$.
\end{lem}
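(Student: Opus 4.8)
The plan is to recognize $J$ as the difference of a continuous linear term and the Monge--Amp\`ere energy, and then to deduce lower semicontinuity from the \emph{upper} semicontinuity of that energy. Writing
$$E(\phi)\,=\,\frac{1}{(n+1)V}\,\sum_{i=0}^n\,\int_M\,\phi\,\omega_0^i\wedge\omega_\phi^{n-i},\qquad \omega_\phi\,=\,\omega_0\,+\,\sqrt{-1}\,\partial\bar\partial\,\phi,$$
for the Aubin--Mabuchi energy, we have $J(\phi)=\frac{1}{V}\int_M\phi\,\omega_0^n-E(\phi)$. The first term is linear in $\phi$ against the fixed smooth measure $\omega_0^n$, hence continuous in the $L^1$-topology; so it suffices to prove that $E$ is upper semicontinuous on $\mathcal{E}^1(M,\omega_0)$ with respect to weak convergence.

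For this I would invoke two standard facts about $E$ on $\mathcal{E}^1(M,\omega_0)$ (see \cite{BBGZ, BEGZ}): first, $E$ is \emph{monotone}, i.e. $\phi\le\psi$ implies $E(\phi)\le E(\psi)$; second, $E$ is \emph{continuous along decreasing sequences} in $\mathcal{E}^1$. Now suppose $\phi_j\to\phi$ in $\mathcal{H}$, so in particular $\phi_j\to\phi$ in $L^1$ with $\phi\in\mathcal{H}\subset\mathcal{E}^1$. Set
$$\tilde\phi_N\,=\,\Bigl(\sup_{j\ge N}\phi_j\Bigr)^{*},$$
the upper semicontinuous regularization of the supremum. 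By Hartogs' lemma the functions $\tilde\phi_N$ are $\omega_0$-plurisubharmonic, decrease in $N$, and satisfy $\tilde\phi_N\downarrow\phi$; moreover $\tilde\phi_N$ is uniformly bounded above and dominates $\phi\in\mathcal{E}^1$, so each $\tilde\phi_N$ lies in $\mathcal{E}^1$ by the monotonicity of the finite-energy class.

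It remains to combine these. For $j\ge N$ we have $\phi_j\le\tilde\phi_N$, so monotonicity of $E$ gives $E(\phi_j)\le E(\tilde\phi_N)$, whence $\sup_{j\ge N}E(\phi_j)\le E(\tilde\phi_N)$. Letting $N\to\infty$ and using continuity of $E$ along the decreasing sequence $\tilde\phi_N\downarrow\phi$ yields $\limsup_j E(\phi_j)\le E(\phi)$. Combined with $\frac{1}{V}\int_M\phi_j\,\omega_0^n\to\frac{1}{V}\int_M\phi\,\omega_0^n$, this gives $\liminf_j J(\phi_j)\ge J(\phi)$, which is exactly the asserted lower semicontinuity.

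I expect the only genuinely delicate points to be the justification that $\tilde\phi_N\downarrow\phi$ (the Hartogs-type statement that the regularized decreasing envelopes of an $L^1$-convergent sequence of $\omega_0$-psh functions converge back to the limit) together with the uniform upper bound on $\sup_M\phi_j$ along a convergent sequence, and the verification that all objects involved remain in $\mathcal{E}^1$, where alone the monotonicity and decreasing-continuity of $E$ are available. The hard part is thus not the final inequality but setting up the envelope so that the two cited properties of $E$ genuinely apply.
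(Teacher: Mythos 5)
Your argument is correct, and it is in substance the same as the paper's: the paper disposes of the lemma by citing Proposition 2.10 of \cite{BEGZ} for the lower semicontinuity of $J$ on $\mathcal{E}^1(M,\omega_0)$ and then restricting to $\mathcal{H}$, and what you have written out is precisely the standard proof of that cited proposition (upper semicontinuity of the Aubin--Mabuchi energy $E$ via Hartogs envelopes, monotonicity of $E$, and its continuity along decreasing sequences in $\mathcal{E}^1$), combined with the $L^1$-continuity of the linear term. So the proposal is a correct, self-contained version of the paper's one-line proof rather than a genuinely different route.
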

\begin{proof}
By Proposition 2.10 in \cite{BEGZ}, we know that $J(\phi)$ is lower semi-continuous on $\mathcal{E}^1(M,\omega_0)$. Since $\mathcal{H}$ is a closed subset of $\mathcal{E}^1(M,\omega_0)$, the lemma is proved.
\end{proof}
Now we prove the proposition.
\begin{proof}
From Theorem 2.12 in \cite{DR}, we know that log Ding functional
$$F_0(\phi)\,=\,J(\phi)\,-\,\frac{1}{V}\,\int_M\,\phi\,\omega_0^n\,-\,\log\left(\int_M \,\prod_{i=1}^k\,|S_i|_{h_{i}}^{-2(1-\beta^i)}\,e^{-\phi}\,\Omega\right)$$
is coercive, that is, for some positive constants $A$ and $B$, we have
$$F_0(\phi)\,\geq \,A\,J(\phi)\,-\,B.$$
Clearly, $F_\delta\,\geq\, F_0$, so $F_\delta$ is also coercive and consequently, there is a minimizing sequence $\phi_j$ of $F_\delta$ satisfying:
$$\lim_{j\to \infty}\,F_\delta(\phi_j)\,=\,\inf_{\phi \in {\mathcal H}} F_\delta(\phi).$$
By the above coercive inequality, for $j$ sufficiently large, we have
\begin{align}
 J(\phi_j)\,\leq \,\frac{1}{A}\,(F_\delta(\phi_j)\,+\,B)\,\leq \,\frac{1}{A}\,(F_\delta(0)\,+\,B) + 1,\label{J}
  \end{align}
Hence,
\begin{align}
\left|\,\int_M\,\phi_j\,\omega_0^n\,\right |\,\le\,|J(\phi_j)|\,+\,|F_\delta(\phi_j)|\,+\,|F_\delta(0)|\,\leq\, C(A,B,F_\delta(0))\label{int}
 \end{align}
So we have
$$|\sup \phi_j|\,\leq\, C(A,B,F_\delta(0)).$$
From this and (\ref{J}), we know that $\phi_j$ lies in a weakly compact subset $\mathcal{K}\subset \mathcal{E}^1(M,\omega_0)$. Note that $\mathcal{K}$ is independent of $\delta$. Hence,
by taking a subsequence if necessary, we may assume that $\phi_j$ converge to a limit $\phi_\delta$ in $\mathcal{E}^1(M,\omega_0)$. From Lemma \ref{lsc}, we know that $\phi_\delta$ is a minimizer of $F_\delta$. By using the arguments in the proof of Theorem 4.1 in [BBGZ], we can further show that $\phi_\delta$ is a critical point of $F_\delta$, so it satisfies (\ref{app}) for some $\lambda$.

Next, we estimate $\lambda$. From (\ref{int}), we know that
 \begin{align}
\int_M\,|\phi_j|\,\omega_0^n\,\leq\, C(A,B,F_\delta(0),V).
 \end{align}
It follows that
$$\left |\{e^{-\phi_j}\,\geq\, C_1\}\right|\,=\,\left |\{\phi_j\,\leq\,-\, \ln C_1\}\right |\,\leq\,\frac{\left |\int_M\,\phi_j\,\omega_0^n\right |}{\ln C_1}\,\leq\, \frac{C(A,B,F_\delta(0),V)}{\ln C_1}.$$
So we can choose $C_1\,>\,0$ such that
$$\left |\{e^{-\phi_j}\,\geq\, C_1\}\right |\,\leq\,\frac{V}{4}.$$
We also choose $\epsilon>0$ such that
$$\left |\{|S_{i}\right |_{h_{i}}^2\,\leq\, \epsilon\}|\,\leq\, \frac{V}{4k}, ~~~~{\rm where}~~i\,=\,1,\cdots,k.$$
Put
$$N\,=\,\{e^{-\phi_j}\,\leq \,C_1\}\,\cap\,\left(\cap_{i=1}^k \, \{|S_{i}|_{h_{i}}^2\,\geq\, \epsilon\}\right),$$
then
$$|N|\,\geq\,\frac{V}{2}.$$
On $N$, there is a $\delta_0\,=\,\delta_0(M,\omega_0, \Omega, \{S_i\}, \{h_i\})$ such that for any $\delta \le \delta_0$ and each $i=1,\cdots,k$, we have
$$e^{-C(A,B,F_\delta(0))}\,\leq\, e^{-\phi_j}\,\leq\, C_1$$
and
$$(|S_{i}|_{h_{i}}^2\,+\,\delta\, e^{\frac{1-K_i}{1-\beta^i}\,\phi_j})^{-(1-\beta^i)}\,\geq\, \frac{1}{2}\, |S_{i}|_{h_{i}}^{-2(1-\beta^i)}.$$
So we get
$$\int_N \,e^{-\phi_\delta}\,\prod_{i=1}^k\left(|S_{i}|_{h_{i}}^2\,+\,\delta \,e^{-\frac{1-K_i}{1-\beta^i}\,\phi_\delta}\right)^{-(1-\beta^i)}\,\Omega\,\geq \,C_2(M,\omega_0, \Omega,\{ S_i\}, \{h_i\}),$$
and consequently
$$\lambda\,\leq\, \frac{V}{C_2(M,\omega_0, \Omega,\{S_i\}, \{h_i\})}.$$
On the other hand, we have
$$e^{-\phi_\delta}\,\prod_{i=1}^k\,\left (|S_i|_{h_{i}}^2\,+\,\delta\, e^{-\frac{1-K_i}{1-\beta^i}\,\phi_\delta}\right )^{-(1-\beta^i)}\,\leq\, h(e^{-\phi_\delta}),$$
we have
$$\int_M \,e^{-\phi_\delta}\,\prod_{i=1}^k\,\left(|S_i|_{h_{i}}^2\,+\,\delta\, e^{-\frac{1-K_i}{1-\beta^i}\,\phi_\delta}\right)^{-(1-\beta^i)}\,\Omega\,\leq\, \int_M\,h(e^{-\phi_\delta})\,\Omega\,=\,\int_M \,h(1)\,\Omega.$$
Hence,
$$\lambda\,\geq\, \frac{1}{V}\,\int_M \,h(1)\,\Omega.$$
This completes the proof of our proposition.
\end{proof}
Before we show the regularity of the solution $\phi_\delta$, let us first calculate the Ricci curvature of $\omega_\delta$.
\begin{lem}
Assuming that $\omega_\delta$ is a smooth metric, then the Ricci curvature of $\omega_\delta$ satisfies
$${\rm Ric} ( \omega_\delta)\,\geq\, K\,\omega_\delta.$$
\end{lem}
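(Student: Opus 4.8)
The plan is to compute ${\rm Ric}(\omega_\delta)$ directly from the Monge--Amp\`ere equation \eqref{app} and then estimate the resulting lower-order terms. Writing $u_i := |S_i|_{h_i}^2 + \delta\,e^{-\frac{1-K_i}{1-\beta^i}\phi_\delta}$, so that \eqref{app} reads $\omega_\delta^n = \lambda\,e^{-\phi_\delta}\prod_i u_i^{-(1-\beta^i)}\,\Omega$, I would apply ${\rm Ric}(\omega_\delta) = {\rm Ric}(\Omega) - \sqrt{-1}\,\partial\bar\partial\log(\omega_\delta^n/\Omega)$ together with the normalization ${\rm Ric}(\Omega) = \omega_0 + \sum_i(1-\beta^i)R(h_i)$. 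The constant $\log\lambda$ then drops out, and the $\sqrt{-1}\,\partial\bar\partial\phi_\delta$ term reconstitutes $\omega_\delta - \omega_0$, so I expect to land on the clean identity
$$ {\rm Ric}(\omega_\delta) = \omega_\delta + \sum_{i=1}^k (1-\beta^i)\,\Theta_i, \qquad \Theta_i := R(h_i) + \sqrt{-1}\,\partial\bar\partial\log u_i. $$
Since $K = \sum_i(K_i-1)+1$, it then suffices to establish the termwise bound $(1-\beta^i)\Theta_i \geq (K_i-1)\,\omega_\delta$ for each $i$ and sum.

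The heart of the matter is a pointwise computation of $\Theta_i$ on $M\setminus|D|$. Set $A_i := |S_i|_{h_i}^2$, so $\sqrt{-1}\,\partial\bar\partial\log A_i = -R(h_i)$ off $D_i$, and $b_i := \delta\,e^{-\frac{1-K_i}{1-\beta^i}\phi_\delta}$, so $\sqrt{-1}\,\partial\bar\partial\log b_i = -\tfrac{1-K_i}{1-\beta^i}(\omega_\delta-\omega_0)$. Expanding via $\sqrt{-1}\,\partial\bar\partial\log u_i = \tfrac{\sqrt{-1}\,\partial\bar\partial u_i}{u_i} - \tfrac{\sqrt{-1}\,\partial u_i\wedge\bar\partial u_i}{u_i^2}$ and substituting $u_i = A_i + b_i$, I anticipate the decomposition
$$ \Theta_i = \frac{b_i}{u_i}\,R(h_i) - \frac{1-K_i}{1-\beta^i}\,\frac{b_i}{u_i}\,(\omega_\delta-\omega_0) + Q_i, $$
where $Q_i$ collects all the gradient terms. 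The decisive point is that $Q_i\geq 0$: clearing denominators, $u_i^2 Q_i$ should reduce to
$$ u_i^2 Q_i = \tfrac{b_i}{A_i}\,\sqrt{-1}\,\partial A_i\wedge\bar\partial A_i + \tfrac{A_i}{b_i}\,\sqrt{-1}\,\partial b_i\wedge\bar\partial b_i - \sqrt{-1}\,\partial A_i\wedge\bar\partial b_i - \sqrt{-1}\,\partial b_i\wedge\bar\partial A_i = \sqrt{-1}\,\eta\wedge\bar\eta \geq 0, $$
with $\eta = \sqrt{b_i/A_i}\,\partial A_i - \sqrt{A_i/b_i}\,\partial b_i$, i.e.\ a Cauchy--Schwarz identity.

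Discarding $Q_i\geq 0$, multiplying by $1-\beta^i>0$, and using $\tfrac{1-K_i}{1-\beta^i}(1-\beta^i)=1-K_i$, I get $(1-\beta^i)\Theta_i \geq \tfrac{b_i}{u_i}\big[(1-\beta^i)R(h_i)\big] - (1-K_i)\tfrac{b_i}{u_i}(\omega_\delta-\omega_0)$. Now the semi-positivity hypothesis $(1-K_i)\omega_0 + (1-\beta^i)R(h_i)\geq 0$, scaled by $b_i/u_i>0$, contributes $-(1-K_i)\tfrac{b_i}{u_i}\omega_0$; this cancels exactly against the $+(1-K_i)\tfrac{b_i}{u_i}\omega_0$ obtained by expanding $\omega_\delta-\omega_0$, leaving $(1-\beta^i)\Theta_i \geq -(1-K_i)\tfrac{b_i}{u_i}\,\omega_\delta$. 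Finally $0<\tfrac{b_i}{u_i}\leq 1$, $1-K_i\geq 0$, and $\omega_\delta>0$ give $(1-\beta^i)\Theta_i \geq -(1-K_i)\omega_\delta = (K_i-1)\omega_\delta$, first on the dense open set $M\setminus|D|$ and then on all of $M$ by continuity, since $\omega_\delta$ is assumed smooth. Summing over $i$ yields ${\rm Ric}(\omega_\delta)\geq K\omega_\delta$.

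I expect two points to be the genuine obstacles rather than routine. First, the nonnegativity $Q_i\geq 0$ is what drives the whole estimate, and it succeeds only because the regularization enters as $\delta\,e^{-\frac{1-K_i}{1-\beta^i}\phi_\delta}$ with precisely this exponent: the sign and coefficient are arranged so that $\sqrt{-1}\,\partial\bar\partial\log b_i$ carries $-\tfrac{1-K_i}{1-\beta^i}(\omega_\delta-\omega_0)$ — the genuine metric $\omega_\delta$, not merely the background $\omega_0$ — which is exactly what lets the $\omega_0$ from the semi-positivity bound cancel. Second, one must track the convex-combination weight $\tfrac{b_i}{u_i}\in(0,1]$ throughout: it simultaneously turns the curvature term into a fraction of $R(h_i)$ and tempers $-(1-K_i)\omega_\delta$, and carelessly replacing it by $1$ at the wrong stage would destroy the bound.
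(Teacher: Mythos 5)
Your proof is correct and is essentially the argument in the paper: the paper absorbs the factor $e^{\frac{1-K_i}{1-\beta^i}\phi_\delta}$ into a twisted Hermitian metric $h_i^\delta=h_i e^{\frac{1-K_i}{1-\beta^i}\phi_\delta}$ so that the regularizer becomes the constant $\delta$ and the Cauchy--Schwarz step reduces to the standard inequality $\sqrt{-1}\,\partial\bar\partial\log(|S|^2_{h^\delta}+\delta)\geq -\frac{|S|^2_{h^\delta}R(h^\delta)}{|S|^2_{h^\delta}+\delta}$, whereas you keep $b_i$ as a varying function and verify the same positivity directly. The resulting weights $\frac{b_i}{u_i}=\frac{\delta}{|S_i|^2_{h_i^\delta}+\delta}$, the use of the semi-positivity hypothesis, and the final termwise bound all coincide with the paper's computation.
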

\begin{proof}
Write (\ref{app}) as:
$$(\omega_0\,+\,\sqrt{-1}\,\partial\bar{\partial}\,\phi_\delta)^n\,=\,\lambda\, e^{(\sum (1-K_i)-1)\,\phi_\delta}\,\prod_{i=1}^k\,\left(|S_i|_{h_{i}}^2\,e^{\frac{1-K_i}{1-\beta^i}\,\phi_\delta}\,+\,\delta \right)^{-(1-\beta_i)}\,\Omega.$$
Set
$$h^{\delta}_{i}\,=\,h_{i}\, e^{\frac{1-K_i}{1-\beta^i}\,\phi_\delta}.$$
Clearly, we have
$$R(h^{\delta}_{i})\,=\,R(h_{i})\,-\,\frac{1-K_i}{1-\beta^i}\,\sqrt{-1}\,\partial\bar{\partial}\,\phi_\delta.$$
Then the Ricci curvature ${\rm Ric}(\omega_\delta)$ is equal to
\begin{align}
&,\sqrt{-1}\,\left((1\,+\,\sum_{i=1}^k\,(K_i-1))\,\partial\bar{\partial}\,\phi_\delta\,+\,
\sum_{i=1}^k\,(1-\beta^i)\,\partial\bar{\partial}\,
\log(|S_i|_{h^{\delta}_{i}}^2\,+\,\delta)\right)\,+\,{\rm Ric}(\Omega) \notag\\
\geq &\, (1\,+\,\sum_{i=1}^k\,(K_i-1))\,\sqrt{-1}\,\partial\bar{\partial}\,\phi_\delta
\,-\,\sum_{i=1}^k\,(1-\beta^i)\,\frac{|S_i|_{h^{\delta}_{i}}^2\,R(h^{\delta}_{i})}{|S_i|_{h^{\delta}_{i}}^2
\,+\,\delta}\,+\,{\rm Ric}(\Omega) \notag\\
=&\, (1\,+\,\sum_{i=1}^k\,(K_i-1))\,\sqrt{-1}\,\partial\bar{\partial}\,\phi_\delta
\,-\,\sum_{i=1}^k\,(1-\beta^i)\,\frac{|S_i|_{h^{\delta}_{i}}^2\,R(h_{i})}{|S_i|_{h^{\delta}_{i}}^2\,+\,\delta}\notag\\
&+\Sigma_{i=1}^k(1-K_i)\,\frac{|s_{D_i}|_{h^{\phi_\delta}_{D_i}}^2\,\sqrt{-1}\partial\bar{\partial}\phi_\delta}{|s_{D_i}|_{h^{\phi_\delta}_{D_i}}^2+\delta}\notag
\,+\,\omega_0\,+\,\sum_{i=1}^k\,(1-\beta^i)\,R(h_{i})\notag\\
=&\sum_{i=1}^k\,\frac{\delta\,[(1-\beta^i)\,R(h_{i})\,+\,(1-K_i)\,\omega_0]}{|S_i|_{h^{\delta}_{i}}^2\,+\,\delta}\,+\,\sum_{i=1}^k\,(K_i-1)
\,\frac{\delta}{|S_i|_{h^{\delta}_{i}}^2\,+\,\delta}\,\omega_\delta
\,+\,\omega_\delta \notag\\
\geq &\, \left (\sum_{i=1}^k\,(K_i-1)\,+\,1\right )\,\omega_\delta \notag.
\end{align}
This lemma is proved.
\end{proof}
Next we establish the regularity of the solution $\phi_\delta$.
\begin{lem}
For some $\alpha=\alpha(M,\omega) \in (0,1)$, we have
$$|\phi_\delta|_{C^\alpha(M,\omega_0)}\,\leq\, C_3,$$
where $C_3= C_3(M,\omega_0, \Omega, \{S_i\}, h_0)$.
\end{lem}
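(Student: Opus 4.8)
The plan is to prove this uniform H\"older estimate by pluripotential theory, reading \eqref{app} as a family of complex Monge--Amp\`ere equations $(\omega_0+\sqrt{-1}\,\partial\bar\partial\,\phi_\delta)^n=F_\delta\,\omega_0^n$ and reducing everything to a uniform (in $\delta$) integrability bound on the density $F_\delta$. The two analytic inputs are Ko\l odziej's $L^\infty$ estimate and the H\"older estimate of Ko\l odziej and of Demailly--Dinew--Guedj--Ko\l odziej--Pham--Zeriahi: if the density of a normalized Monge--Amp\`ere solution is bounded in $L^{p}(\omega_0^n)$ for some $p>1$, then the solution is $C^\alpha$ with both $\alpha$ and the $C^\alpha$-norm controlled by $p$, $n$, $(M,\omega_0)$ and $\|F_\delta\|_{L^p}$. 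The whole argument therefore comes down to producing such a $p>1$ and such a bound uniformly in $\delta$.

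First I would record the facts already available: $\lambda\in[a,b]$ from the previous Proposition, together with the one-sided normalization $\sup_M\phi_\delta\le C$ and the energy bound $J(\phi_\delta)\le C$ inherited from the minimizing sequence. Next, I write $F_\delta=\lambda\,e^{-\phi_\delta}\prod_{i=1}^k\bigl(|S_i|_{h_i}^2+\delta\,e^{-\frac{1-K_i}{1-\beta^i}\phi_\delta}\bigr)^{-(1-\beta^i)}\le b\,e^{-\phi_\delta}\prod_{i=1}^k|S_i|_{h_i}^{-2(1-\beta^i)}$, where the inequality merely drops the nonnegative $\delta$-term and so removes all explicit $\delta$-dependence from the singular factor. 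Since the $D_i$ are normal crossing and $\beta^i\in(0,1)$, the product $\prod_i|S_i|_{h_i}^{-2(1-\beta^i)}$ lies in $L^{p_0}(\omega_0^n)$ for every $p_0<\min_i\frac1{1-\beta^i}$, a range that includes values exceeding $1$. The factor $e^{-\phi_\delta}$ is controlled by exponential integrability: because $\sup_M\phi_\delta$ is uniformly bounded, Tian's estimate for the positivity of the $\alpha$-invariant gives $\int_M e^{-\alpha_0\phi_\delta}\,\omega_0^n\le C$ uniformly in $\delta$ for a fixed $\alpha_0>0$. Combining these through H\"older's inequality yields $\|F_\delta\|_{L^p(\omega_0^n)}\le C$ uniformly in $\delta$ for some $p>1$, and then Ko\l odziej's $L^\infty$ estimate, using also $\int_M F_\delta\,\omega_0^n=V$, gives $\|\phi_\delta-\sup_M\phi_\delta\|_{L^\infty}\le C$, hence $\|\phi_\delta\|_{L^\infty}\le C$, uniformly in $\delta$.

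With the uniform $L^\infty$ bound the estimate improves at once: $e^{-\phi_\delta}$ is now bounded above and below, and $0<\prod_i(|S_i|_{h_i}^2+\delta\,e^{-\frac{1-K_i}{1-\beta^i}\phi_\delta})^{-(1-\beta^i)}\le\prod_i|S_i|_{h_i}^{-2(1-\beta^i)}$, so $F_\delta$ is uniformly bounded in $L^{p_0}(\omega_0^n)$ for every $p_0<\min_i\frac1{1-\beta^i}$, with no loss coming from exponential integrability. Fixing one such $p_0>1$ and feeding the uniform $L^{p_0}$ bound into the H\"older estimate of Ko\l odziej and Demailly--Dinew--Guedj--Ko\l odziej--Pham--Zeriahi produces $|\phi_\delta|_{C^\alpha(M,\omega_0)}\le C_3$ with $\alpha=\alpha(p_0,n)\in(0,1)$ depending only on the fixed $p_0$ (hence on the $\beta^i$) and on $(M,\omega_0)$, and with $C_3$ independent of $\delta$, which is exactly the assertion.

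The \emph{main obstacle} is the first step: securing the $L^p$ bound on $F_\delta$ uniformly in $\delta$ in the presence of the self-referential, $\phi_\delta$-dependent density. The difficulty is that $e^{-\phi_\delta}$ need not be integrable to a power larger than $1$ a priori, so one cannot naively raise the entire density to the power $p$; the point is to decouple the $e^{-\phi_\delta}$ factor, handled only through the $\alpha$-invariant exponential integrability and the uniform $\sup$-bound, from the genuinely singular but $\delta$-independent factor $\prod_i|S_i|_{h_i}^{-2(1-\beta^i)}$, and to extract an $L^\infty$ bound first before seeking the larger $L^{p_0}$ range that drives the H\"older estimate. The remaining points requiring care are verifying that the uniform $\sup$-bound and energy bound survive the passage to the minimizer $\phi_\delta$, and that both of Ko\l odziej's estimates apply with constants independent of $\delta$.
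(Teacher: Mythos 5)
Your overall architecture is the same as the paper's: bound the Monge--Amp\`ere density $F_\delta$ uniformly in some $L^p(\omega_0^n)$ with $p>1$ and then invoke Ko{\l}odziej's $C^\alpha$ estimate. The final steps (dropping the nonnegative $\delta$-term from the singular factor, the $L^{q}$ bound on $\prod_i|S_i|_{h_i}^{-2(1-\beta^i)}$ for $q<\min_i\frac{1}{1-\beta^i}$, and the concluding appeal to the H\"older estimate for $L^p$ densities) all match what the paper does, and your two-stage presentation ($L^\infty$ first, then the improved $L^{p_0}$ bound) is if anything more explicit than the paper's ``so the lemma follows.''

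However, there is a genuine gap in your first step, and it is exactly the step you flag as the main obstacle. Tian's $\alpha$-invariant only yields $\int_M e^{-\alpha_0\phi_\delta}\,\omega_0^n\le C$ for some fixed $\alpha_0>0$ which in general satisfies $\alpha_0\le 1$; i.e., it puts $e^{-\phi_\delta}$ uniformly in $L^{\alpha_0}$ only. To run your H\"older inequality $\|e^{-\phi_\delta}G\|_{L^p}\le\|e^{-\phi_\delta}\|_{L^{pa}}\|G\|_{L^{pb}}$ with $G=\prod_i|S_i|_{h_i}^{-2(1-\beta^i)}\in L^{q}$ for $q<q_*:=\min_i\frac{1}{1-\beta^i}$, you need $pb<q_*$ and hence $pa>q_*/(q_*-1)>1$; so you need $e^{-\phi_\delta}$ uniformly in $L^{s}$ for some $s$ strictly larger than the conjugate exponent $q_*'$, which the $\alpha$-invariant does not provide. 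Without that, $F_\delta$ is only controlled in $L^r$ for some $r<1$, Ko{\l}odziej's $L^\infty$ estimate does not apply, and your bootstrap never starts. The paper closes this hole differently: it uses that $\phi_\delta$ lies in a fixed weakly compact subset $\mathcal{K}$ of $\mathcal{E}^1(M,\omega_0)=PSH_{full}\cap L^1(\omega_\phi)$ (coming from the uniform bounds on $J(\phi_j)$ and $\sup\phi_j$ in the proof of the Proposition), and invokes Zeriahi's uniform version of Skoda's integrability theorem: since such functions have zero Lelong numbers, $\|e^{-\phi_\delta}\|_{L^p}\le C_p$ for \emph{every} $p>1$, uniformly over $\mathcal{K}$ and hence in $\delta$. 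With that substitution your argument goes through verbatim (and in fact the intermediate $L^\infty$ stage becomes optional, since one can take $pa$ as large as one likes directly). So the missing ingredient is Zeriahi's uniform Skoda estimate on the compact family in $\mathcal{E}^1$, not the $\alpha$-invariant.
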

\begin{proof}
From the above, we know that
$$\phi_\delta\,\in\,\mathcal{K}\,\subset\,PSH_{full}(M,\omega_0),$$
where $\mathcal{K}$ is a weak compact subset. So by [Zer], for any $p>1$, $|e^{-\phi_\delta}|_{L_p}$ is uniformly bounded by some constant $C_p$ depending on $\mathcal{K}$. For any $q\in (1,\min\{\frac{1}{1-\beta^i}\})$,
we have
$$\left |\prod_{i=1}^k\,\left (|S_i|_{h_{i}}^2\,+\,\delta\, e^{\frac{1-K_i}{1-\beta^i}\,\phi_\delta}\right)^{-(1-\beta^i)}\right |_{L^q}\,\leq\,
\left |\prod_{i=1}^k\,|S_i|_{h_{i}}^{-2(1-\beta^i)}\right |_{L^q}\,\leq\, C(M,\omega_0, \Omega,\{S_i\}, \{h_{i}\},q).$$
It follows from the H\"{o}lder inequality that for any $p\in (1,\min\{\frac{1}{1-\beta^i}\})$ and some constant $C$ independent of $\delta$,
$$\left |e^{-\phi_\delta}\,\prod_{i=1}^k\,\left (|S_i|_{h_{i}}^2\,+\,\delta \,e^{\frac{1-K_i}{1-\beta^i}\,\phi_\delta}\right)^{-(1-\beta^i)}\right |_{L_p}\,\leq\, C.$$
So the lemma follows.
\end{proof}
\begin{prop}\label{cc}
There exists $\delta_l \rightarrow 0$ such that $\phi_{\delta_l}$ converges to $\phi_{KE}+c$ in the $C^0$-topology for some constant $c$.
\end{prop}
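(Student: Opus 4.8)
The plan is to combine the uniform Hölder estimate just established with a stability argument for the limiting Monge--Ampère equation, followed by the uniqueness of the conic Kähler--Einstein potential. First I would use the bound $|\phi_\delta|_{C^\alpha(M,\omega_0)}\le C_3$ together with Arzelà--Ascoli to extract a sequence $\delta_l\to 0$ along which $\phi_{\delta_l}$ converges in $C^0(M)$ (indeed in $C^{\alpha'}$ for every $\alpha'<\alpha$) to a bounded $\omega_0$-plurisubharmonic function $\phi_\infty$. Since the accompanying constants $\lambda(\delta_l)$ all lie in the fixed interval $[a,b]$, after passing to a further subsequence I may also assume $\lambda(\delta_l)\to\lambda_\infty\in[a,b]$.

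The core step is to pass to the limit in equation (\ref{app}). On the left-hand side, the uniform convergence of the bounded $\omega_0$-psh potentials $\phi_{\delta_l}\to\phi_\infty$ implies, by the Bedford--Taylor continuity of the complex Monge--Ampère operator, that $(\omega_0+\sqrt{-1}\,\partial\bar\partial\phi_{\delta_l})^n\to(\omega_0+\sqrt{-1}\,\partial\bar\partial\phi_\infty)^n$ weakly as measures. On the right-hand side, the integrand converges pointwise on $M\setminus|D|$: because $\phi_{\delta_l}$ is uniformly bounded, $\delta_l\,e^{-\frac{1-K_i}{1-\beta^i}\phi_{\delta_l}}\to 0$, so $(|S_i|_{h_i}^2+\delta_l\,e^{-\frac{1-K_i}{1-\beta^i}\phi_{\delta_l}})^{-(1-\beta^i)}\to|S_i|_{h_i}^{-2(1-\beta^i)}$, while $e^{-\phi_{\delta_l}}\to e^{-\phi_\infty}$ uniformly. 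This pointwise convergence, together with the uniform $L^p$ domination for some $p>1$ obtained in the preceding lemma, yields convergence in $L^1(\Omega)$ by the Vitali convergence theorem. Hence $\phi_\infty$ is a bounded pluripotential solution of
\[
(\omega_0+\sqrt{-1}\,\partial\bar\partial\phi_\infty)^n\,=\,\lambda_\infty\,e^{-\phi_\infty}\,\prod_{i=1}^k|S_i|_{h_i}^{-2(1-\beta^i)}\,\Omega.
\]

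Finally I would identify the limit. Setting $c=\log\lambda_\infty$ and $\psi=\phi_\infty-c$, the function $\psi$ satisfies exactly the conic Kähler--Einstein equation (\ref{conic}) solved by $\phi_{KE}$. Because the hypothesis $Aut^0(M,D)=1$ excludes nontrivial continuous automorphisms, the uniqueness theorem for conic Kähler--Einstein metrics (of Bando--Mabuchi type, in the form established in \cite{Ber}) forces $\omega_\psi=\omega_{\phi_{KE}}$, so that $\psi-\phi_{KE}$ is constant; the factor $e^{-\phi}$ in (\ref{conic}) then removes even this additive ambiguity, giving $\psi=\phi_{KE}$. Therefore $\phi_\infty=\phi_{KE}+c$, which is the assertion, and the passage to $C^0$ convergence is already built into the extraction via the Hölder bound.

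I expect the main obstacle to be the simultaneous and compatible passage to the limit on both sides of (\ref{app}), in particular controlling the right-hand side near the singular locus $|D|$ where the weights $|S_i|_{h_i}^{-2(1-\beta^i)}$ blow up. The uniform $L^p$ bound with $p>1$ is precisely what supplies the uniform integrability needed to rule out concentration of mass on $|D|$ in the limit; without it the pointwise convergence alone would not guarantee that no mass escapes onto the divisor, and the limiting density could fail to match the expected conic weight. A secondary, more routine point is invoking the conic uniqueness theorem in the correct generality, namely for normal crossing divisors with several cone angles $2\pi\beta^i$ not necessarily close to $2\pi$.
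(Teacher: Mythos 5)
Your proposal is correct and follows essentially the same route as the paper: extract a $C^0$-convergent subsequence from the uniform H\"older bound, pass to the limit in (\ref{app}) to obtain the limiting conic Monge--Amp\`ere equation with some $\lambda_\infty\in[a,b]$, and conclude by the Berndtsson/Bando--Mabuchi-type uniqueness theorem (the paper cites \cite{Bo} for this rather than \cite{Ber}, but that is the same result you have in mind). Your added detail on the two-sided passage to the limit (Bedford--Taylor on the left, Vitali with the uniform $L^p$ bound on the right) is exactly what the paper leaves implicit.
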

\begin{proof}
From the above lemma, we can choose a subsequence $\phi_{\delta_l}$ which converges to a continuous function $\psi$, moreover, for some $\lambda$,
$\psi$ satisfies
$$(\omega_0\,+\,\sqrt{-1}\,\partial\bar{\partial}\,\psi)^n\,=\,\lambda \,e^{-\psi}\,\prod_{i=1}^k \,|S_i|_{h_{i}}^{-2(1-\beta^i)}\,\Omega.$$
By the uniqueness result of Berndtsson in \cite{Bo}, we know that $\psi\,=\,\phi_{KE}+c$.
\end{proof}

Now we show that $\phi_\delta$ is a smooth function. We need a special case of Proposition 2.1 in \cite{GP}.
\begin{prop}\label{regular}
Let $\phi$ be a solution of
$$\omega_\phi^n\,=\,e^{\psi^{+}-\psi^{-}}\,\omega_0^n,$$
where $\omega_\phi = \omega_0+\sqrt{-1}\,\partial\bar{\partial}\,\phi$ and $\psi^{\pm}$ are smooth functions. We further
assume that there exists a $C>0$ such that:

\vskip 0.1in
\noindent
(i) $ |\phi|\, \leq\, C$;

\vskip 0.1in
\noindent
(ii) $|\psi^{\pm}|\,\leq\, C $ and $\sqrt{-1}\,\partial\bar{\partial}\,\psi^{\pm}\,\geq\, -\,C\,\omega_0$;

\vskip 0.1in
\noindent
(iii) The curvature tensor $R(\omega_0)$ of $\omega_0$ is bounded from below by $-C$.

\vskip 0.1in
\noindent
Then there exists a constant $A>0$ depending only on $C$ such that
$$\frac{1}{A}\,\omega_0\,\leq\, \omega_\phi\,\leq\, A\,\omega_0.$$
\end{prop}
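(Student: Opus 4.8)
The plan is to reduce the two–sided estimate to a single upper bound on the trace ${\rm tr}_{\omega_0}\omega_\phi=n+\Delta_{\omega_0}\phi$, and then to obtain that bound by a maximum–principle argument of Aubin--Yau type. First I would note that hypothesis (ii) gives $\omega_\phi^n/\omega_0^n=e^{\psi^+-\psi^-}\in[e^{-2C},e^{2C}]$. Writing $\lambda_1\le\cdots\le\lambda_n$ for the eigenvalues of $\omega_\phi$ with respect to $\omega_0$, an upper bound ${\rm tr}_{\omega_0}\omega_\phi=\sum_i\lambda_i\le\Lambda$ forces $\lambda_n\le\Lambda$, and since $\lambda_1=\big(\prod_i\lambda_i\big)\big/\prod_{i\ge2}\lambda_i\ge e^{-2C}\Lambda^{-(n-1)}$, it also forces a positive lower bound on $\lambda_1$. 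Thus both inequalities $\tfrac1A\omega_0\le\omega_\phi\le A\omega_0$ follow once ${\rm tr}_{\omega_0}\omega_\phi$ is controlled from above by a constant depending only on $C$, and it suffices to produce such a bound.

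For the trace bound I would set $F=\psi^+-\psi^-$, so that $\omega_\phi^n=e^F\omega_0^n$, and invoke the standard second–order inequality: by (iii) there is $B=B(C)$ bounding the bisectional curvature of $\omega_0$ from below, and
$$\Delta_{\omega_\phi}\log{\rm tr}_{\omega_0}\omega_\phi\ \ge\ \frac{\Delta_{\omega_0}F}{{\rm tr}_{\omega_0}\omega_\phi}-B\,{\rm tr}_{\omega_\phi}\omega_0.$$
The difficulty, and the main obstacle, is that $\Delta_{\omega_0}F=\Delta_{\omega_0}\psi^+-\Delta_{\omega_0}\psi^-$ has no lower bound: (ii) only yields $\Delta_{\omega_0}\psi^\pm\ge -nC$, which controls the $\psi^+$ contribution but leaves $-\Delta_{\omega_0}\psi^-$ unbounded below. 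This is exactly where the one–sided Hessian bound must be used. The key observation is that $T:=\sqrt{-1}\,\partial\bar\partial\psi^-+C\omega_0\ge0$, and that for any nonnegative $(1,1)$–form one has the elementary inequality ${\rm tr}_{\omega_0}T\le({\rm tr}_{\omega_0}\omega_\phi)\,({\rm tr}_{\omega_\phi}T)$, checked at once in a basis diagonalizing $\omega_\phi$ with respect to $\omega_0$. Dividing by ${\rm tr}_{\omega_0}\omega_\phi$ and using $\Delta_{\omega_0}\psi^-\le{\rm tr}_{\omega_0}T$ converts the bad term into a Laplacian with respect to $\omega_\phi$:
$$\frac{\Delta_{\omega_0}\psi^-}{{\rm tr}_{\omega_0}\omega_\phi}\ \le\ {\rm tr}_{\omega_\phi}T\ =\ \Delta_{\omega_\phi}\psi^-+C\,{\rm tr}_{\omega_\phi}\omega_0.$$

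I would then apply the maximum principle to the auxiliary function $H=\log{\rm tr}_{\omega_0}\omega_\phi-A\phi+\psi^-$ with $A=B+C+1$. At an interior maximum $x_0$ one has $\Delta_{\omega_\phi}H\le0$; substituting the inequality above together with $\Delta_{\omega_\phi}\phi=n-{\rm tr}_{\omega_\phi}\omega_0$ and $\Delta_{\omega_0}\psi^+\ge-nC$, the term $\Delta_{\omega_\phi}\psi^-$ produced by the test function cancels precisely the one coming from the estimate for $\Delta_{\omega_0}\psi^-$, leaving $(A-B-C)\,{\rm tr}_{\omega_\phi}\omega_0(x_0)\le An+nC$, i.e. ${\rm tr}_{\omega_\phi}\omega_0(x_0)\le C'(C)$. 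By the volume–ratio bound this gives ${\rm tr}_{\omega_0}\omega_\phi(x_0)\le C''(C)$, hence $H(x_0)\le C''(C)+AC+C$ using $|\phi|,|\psi^-|\le C$. Since $x_0$ is the maximum, the identity $\log{\rm tr}_{\omega_0}\omega_\phi=H+A\phi-\psi^-$ propagates the bound to all of $M$, yielding ${\rm tr}_{\omega_0}\omega_\phi\le\Lambda(C)$ as required. (Points where ${\rm tr}_{\omega_0}\omega_\phi<1$ need no argument, so the normalization ${\rm tr}_{\omega_0}\omega_\phi\ge1$ used to bound $\Delta_{\omega_0}\psi^+/{\rm tr}_{\omega_0}\omega_\phi$ from below is harmless.)

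To summarize where the work lies: the whole difficulty is the lack of a lower bound on $\Delta_{\omega_0}F$, and everything hinges on the trace inequality for the nonnegative form $T$ together with the cancellation it sets up against the $+\psi^-$ term in $H$; the curvature term is absorbed by $A\,{\rm tr}_{\omega_\phi}\omega_0$ via (iii), and the sup–norm bounds in (i)--(ii) are only used at the end to propagate the pointwise estimate from $x_0$ to $M$.
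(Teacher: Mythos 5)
Your argument is correct. Note that the paper itself gives no proof of this proposition: it is quoted as a special case of Proposition 2.1 in \cite{GP}, so there is nothing internal to compare against. What you have written is a complete, self-contained derivation, and it reproduces exactly the mechanism behind the cited result: the whole point is that $\Delta_{\omega_0}\psi^-$ has no upper bound, and the cure is the trace inequality $\mathrm{tr}_{\omega_0}T\le(\mathrm{tr}_{\omega_0}\omega_\phi)(\mathrm{tr}_{\omega_\phi}T)$ for the nonnegative form $T=\sqrt{-1}\,\partial\bar\partial\psi^-+C\omega_0$, combined with adding $+\psi^-$ to the test function so that the resulting $\Delta_{\omega_\phi}\psi^-$ terms cancel. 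Your reduction from the trace bound to the two-sided estimate via the volume ratio $e^{\psi^+-\psi^-}\in[e^{-2C},e^{2C}]$ is also right. Two minor remarks: the parenthetical normalization $\mathrm{tr}_{\omega_0}\omega_\phi\ge1$ is not even needed, since the arithmetic--geometric mean inequality gives $\mathrm{tr}_{\omega_0}\omega_\phi\ge n\,e^{(\psi^+-\psi^-)/n}\ge n\,e^{-2C/n}$ everywhere; and at the maximum point you should pass from $\mathrm{tr}_{\omega_\phi}\omega_0(x_0)\le C'$ to $\mathrm{tr}_{\omega_0}\omega_\phi(x_0)\le e^{2C}\bigl(\mathrm{tr}_{\omega_\phi}\omega_0(x_0)\bigr)^{n-1}$ exactly as you indicate, which closes the argument.
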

Choose a sequence of smooth $\omega_0$-psh functions $\psi_j$ which converges to $\phi_\delta$ in the $C^0$-norm. Note that by saying that $\psi_j$ is $\omega_0$-psh, we mean
$$\omega_0+\sqrt{-1}\,\partial\bar\partial \,\psi_j \,\ge\, 0.$$
\begin{lem}
If $\phi_j$ is any solution of
\begin{align}\label{smooth}
(\omega_0\,+\,\sqrt{-1}\,\partial\bar{\partial}\,\phi_j)^n\,=\,\lambda e^{-K\,\psi_j}\,\prod_{i=1}^k\, \left(|S_i|_{h_{i}}^2\,e^{\frac{1-K_i}{1-\beta^i}\,\psi_j}\, +\,\delta\right )^{-(1-\beta^i)}\,\Omega,
\end{align}
then for some $C\,=\,C(M,\delta,|\phi_\delta|_{C^0})$,
$$|\Delta \phi_j|\,\leq\, C.$$
\end{lem}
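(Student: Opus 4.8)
The goal is a uniform $C^0$ bound on the Laplacian $\Delta\phi_j$ for solutions of \eqref{smooth}, where the right-hand side is built from the fixed smooth data $\psi_j$ (a smoothing of $\phi_\delta$) rather than from $\phi_j$ itself. Because $\psi_j$ is smooth, the right-hand side of \eqref{smooth} is a smooth positive volume form, so the standard second-order estimate for the complex Monge-Amp\`ere equation applies; the real work is to extract a bound that is \emph{uniform in $j$}. My plan is to write \eqref{smooth} in the form $\omega_{\phi_j}^n = e^{F_j}\,\omega_0^n$ and to verify that $F_j$ has a $C^2$ bound (or at least a lower bound on $\sqrt{-1}\,\partial\bar\partial F_j$) that is uniform in $j$, and then to invoke the classical Aubin-Yau/Siu second-order estimate.

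\textbf{Main steps.} First I would record that $e^{F_j} = \lambda\,e^{-K\psi_j}\prod_i\bigl(|S_i|_{h_i}^2\,e^{\frac{1-K_i}{1-\beta^i}\psi_j}+\delta\bigr)^{-(1-\beta^i)}\,\Omega/\omega_0^n$. Since $\delta>0$ is fixed and $|S_i|_{h_i}^2\ge 0$, each factor $|S_i|_{h_i}^2\,e^{\frac{1-K_i}{1-\beta^i}\psi_j}+\delta$ is bounded below by $\delta>0$ and above by a constant controlled by $\sup|\psi_j|$, which in turn is controlled by $|\phi_\delta|_{C^0}$ since $\psi_j\to\phi_\delta$ uniformly. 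Hence $F_j$ is uniformly bounded: $|F_j|\le C(M,\delta,|\phi_\delta|_{C^0})$. Second, and this is the crucial point for the second-order estimate, I would bound $\sqrt{-1}\,\partial\bar\partial F_j$ from below uniformly. Writing $F_j$ as a sum, the troublesome terms are $\sum_i(1-\beta^i)\,\sqrt{-1}\,\partial\bar\partial\log\bigl(|S_i|_{h_i}^2\,e^{\frac{1-K_i}{1-\beta^i}\psi_j}+\delta\bigr)$; exactly as in the Ricci-lower-bound computation above, each such logarithm has complex Hessian bounded below in terms of $R(h_i)$, $\omega_0$, $\sqrt{-1}\,\partial\bar\partial\psi_j$ and the positive regularizing term $\delta$, and the genuinely singular contributions are killed by the $\delta$ in the denominator. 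I would therefore obtain $\sqrt{-1}\,\partial\bar\partial F_j \ge -C\,\omega_0 - C\,\sqrt{-1}\,\partial\bar\partial\psi_j$ with $C$ uniform. Third, with $\omega_{\phi_j}\ge 0$, $|F_j|\le C$, and the lower Hessian bound on $F_j$ in hand, the standard maximum-principle argument applied to $e^{-A\phi_j}(n+\Delta_{\omega_0}\phi_j)$ (or to $\mathrm{tr}_{\omega_0}\omega_{\phi_j}$) with a large constant $A$ absorbing the lower curvature bound of $\omega_0$ yields $n+\Delta\phi_j\le C$, and the reverse inequality $n+\Delta\phi_j>0$ is automatic since $\omega_{\phi_j}>0$; this gives $|\Delta\phi_j|\le C$.

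\textbf{Main obstacle.} The delicate step is the uniform lower bound on $\sqrt{-1}\,\partial\bar\partial F_j$, because $\psi_j$ is only $\omega_0$-psh and its own second derivatives are not controlled uniformly in $j$. The resolution is that the coefficient of $\sqrt{-1}\,\partial\bar\partial\psi_j$ appearing through $F_j$ can be arranged, via the substitution $h_i^\delta=h_i\,e^{\frac{1-K_i}{1-\beta^i}\psi_j}$ used above, to enter only through the combination $\frac{\delta}{|S_i|_{h_i^\delta}^2+\delta}\,\sqrt{-1}\,\partial\bar\partial\psi_j$ with a bounded coefficient, and the $\omega_0$-psh property $\omega_0+\sqrt{-1}\,\partial\bar\partial\psi_j\ge 0$ then converts the unfavorable sign into a controlled term; this is precisely the mechanism that produced the clean $K\omega_\delta$ bound in the Ricci computation, and the same algebra, now carried at the level of the smoothing $\psi_j$, supplies the uniform lower Hessian bound needed to feed the second-order estimate.
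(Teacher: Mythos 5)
There is a genuine gap at your second step, and it is precisely the point the paper's argument is built to avoid. Writing the equation as $\omega_{\phi_j}^n=e^{F_j}\omega_0^n$, one has
$F_j=\log\lambda-K\psi_j-u_j+\log(\Omega/\omega_0^n)$ with
$u_j=\sum_i(1-\beta^i)\log\bigl(|S_i|_{h_i}^2e^{\frac{1-K_i}{1-\beta^i}\psi_j}+\delta\bigr)$. The computation you cite from the Ricci-curvature lemma produces a \emph{lower} bound on $+\sqrt{-1}\,\partial\bar\partial u_j$ (that is the sign in which $u_j$ enters the Ricci form), whereas $F_j$ contains $-u_j$ and $-K\psi_j$. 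A lower bound on $\sqrt{-1}\,\partial\bar\partial F_j$ would therefore require an \emph{upper} bound on $\sqrt{-1}\,\partial\bar\partial u_j$ and on $K\sqrt{-1}\,\partial\bar\partial\psi_j$, and neither is available uniformly in $j$: the $\omega_0$-psh condition $\omega_0+\sqrt{-1}\,\partial\bar\partial\psi_j\ge 0$ is a one-sided bound on the wrong side, and $\sqrt{-1}\,\partial\bar\partial u_j$ contains the nonnegative gradient term
$\delta\,\sqrt{-1}\,\partial\|S_i\|_j^2\wedge\bar\partial\|S_i\|_j^2/(\|S_i\|_j^2+\delta)^2$, which involves $\partial\psi_j\wedge\bar\partial\psi_j$ and enters $F_j$ with a minus sign with no uniform control. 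So the inequality $\sqrt{-1}\,\partial\bar\partial F_j\ge -C\,\omega_0$ that you feed into the Aubin--Yau maximum principle is not established, and the classical second-order estimate (which needs $\Delta_{\omega_0}F_j\ge -C$) does not apply.

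The paper's route is designed exactly for this situation: it writes the density as $e^{\psi_{+,j}-\psi_{-,j}}$ with $\psi_{+,j}=(1-K)\psi_j$ and $\psi_{-,j}=u_j+\psi_j$, checks that \emph{each} of $\psi_{\pm,j}$ is uniformly bounded and uniformly quasi-psh (this is where the hypotheses $(1-\beta^i)R(h_i)+(1-K_i)\omega_0\ge 0$, $K\le 1$ and $\omega_0+\sqrt{-1}\,\partial\bar\partial\psi_j\ge 0$ are used, all with the correct signs), and then invokes Proposition \ref{regular} (the Guenancia--P\u{a}un estimate). That estimate requires only that the positive and negative parts of the decomposition be separately quasi-psh and bounded; it does not require $\sqrt{-1}\,\partial\bar\partial(\psi_{+,j}-\psi_{-,j})$ to be bounded below, and that is the whole point of using it here. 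Your first step (the uniform $C^0$ bound on $F_j$) and your overall framing are fine, but to repair the argument you should replace the claimed Hessian lower bound on $F_j$ by the two separate quasi-psh bounds on $\psi_{+,j}$ and $\psi_{-,j}$ and cite Proposition \ref{regular} rather than the classical Aubin--Yau estimate.
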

\begin{proof}
Put
$$u_j\,=\,\sum_{i=1}^k \,(1-\beta^i)\,\log\left (|S_i|_{h_{i}}^2\,e^{\frac{1-K_i}{1-\beta^i}\,\psi_j}\, +\,\delta\right).$$
For each $i$, we put for simplicity
$$||S_i||_j^2\,=\,|S_i|_{h_i}^2\,e^{\frac{1-K_i}{1-\beta^i}\,\psi_j}.$$
Using the computation as above, we know that,
\begin{align}
\sqrt{-1}\,\partial\bar{\partial}\, u_j &\geq\, -\sum_{i=1}^k\,(1-\beta^i)\frac{||S_i||_j^2}{||S_i||_{j}^2\,+\,\delta}\,R(h_{i})\,+\,
\sum_{i=1}^k\,(1-K_i)\frac{||S_i||_{j}^2}{||S_{i}||_{j}^2\,+\,\delta}\,\sqrt{-1}\,\partial\bar{\partial}\,\psi_j\notag\\
&\geq\, \sum_{i=1}^k\,\frac{\delta}{||S_i||_{j}^2\,+\,\delta}\,\left((1-\beta^i)\,R(h_i)\,+\,(1\,-\,K_i)\,\omega_0\right)\notag\\
&-\,\sum_{i=1}^k\, (1-\beta^i)\,R(h_i) \,-\,(1\,-\,K) \,\omega_0\notag
\end{align}
Here we have used the fact:
$$\omega_0\,+\,\sqrt{-1}\,\partial\bar{\partial}\,\psi_j \,\geq\, 0.$$
By our assumption,
$$(1-\beta^i)\,R(h_i)\,+\,(1\,-\,K_i)\,\omega_0\,\geq\, 0,$$
so for some $C>0$, we have
$$\sqrt{-1}\,\partial\bar{\partial}\, u_j\,\geq\, -\,C\,\omega_0.$$
Since $K = \sum_{i=1}^k (K_i-1) + 1 \leq 1$, we have
$$\sqrt{-1}\,\partial\bar{\partial}\,((-K+1)\,\psi_j)\, \geq\, (K-1)\,\omega_0.$$
The righthand side of $(\ref{smooth})$ can be written as
$e^{\psi_{+,j}-\psi_{-,j}}$, where
$$\psi_{+,j}\, =\, (1-K)\, \psi_j,\,\,\,\,{\rm and}\,\,\,\,\,\psi_{-,j}\,=\,u_j\,+\,\psi_j.$$
It follows from the above that for some constant $C > 0$, we have
$$\sqrt{-1}\,\partial\bar{\partial}\,\psi_{\pm,j}\geq - C \omega_0.$$
Hence, by Proposition \ref{regular}, we have
$$|\Delta \phi_j|\leq C_\delta,$$
where $C_\delta = C(M,\delta,|\phi_\delta|_{C^0})$.
\end{proof}
It follows from the uniqueness theorem for complex Monge-Ampere equations that $\phi_i$ converges to $\phi_\delta + c$ for some constant $c$, so we have  $$|\phi_\delta|_{C^{1,1}(M,\omega_0)}\,\leq\, C_\delta.$$
By the Evans-Krylov theory, we know that for some $\alpha \in (0,1)$,
$$|\phi_\delta|_{C^{2,\alpha}(M,\omega_0)}\,\leq\, C'_\delta,$$
where $C'_\delta$ may depend on $\delta$. Then, using bootstrap, we see that $\phi_\delta$ is a smooth function.

Now we derive uniform estimates of $\omega_\delta$ and prove the Gromov-Hausdorff convergence.

\begin{lem}
There exists $C = C(M,\omega_0,|\phi_\delta|_{C^0})$ such that
$$\frac{1}{C}\,\omega_0\,\leq \,\omega_\delta\,\leq\, C\,\prod_{i=1}^k\,|S_i|_{h_{i}}^{-2(1-\beta^i)}\,\omega_0.$$
\end{lem}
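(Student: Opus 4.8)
The plan is to prove the two bounds by separate but linked arguments, both resting on the Ricci lower bound ${\rm Ric}(\omega_\delta)\geq K\omega_\delta$ established above. For the lower bound $\tfrac1C\omega_0\leq\omega_\delta$ I would run a Chern--Lu (Schwarz-type) second order estimate on $v:={\rm tr}_{\omega_\delta}\omega_0$, and for the upper bound I would feed the resulting eigenvalue control back into the Monge--Amp\`ere equation \eqref{app}. Since $\phi_\delta$ is smooth and the right-hand side of \eqref{app} is smooth and strictly positive for each fixed $\delta>0$, the form $\omega_\delta$ is a genuine smooth K\"ahler metric on the compact manifold $M$, so $v$ is a smooth positive function attaining its extrema and the maximum principle is available on all of $M$.

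For the lower bound, note that $\omega_0$ is a fixed smooth metric, so its holomorphic bisectional curvature is bounded above by some $B_0=B_0(M,\omega_0)$. The Chern--Lu inequality for the identity map $(M,\omega_\delta)\to(M,\omega_0)$ gives, after using ${\rm Ric}(\omega_\delta)\geq K\omega_\delta$ to bound the Ricci term by the constant $K$, an inequality of the form $\Delta_{\omega_\delta}\log v\geq K-B_0\,v$. I would then apply the maximum principle to $\log v-A\phi_\delta$ with $A=B_0+1$, using $\Delta_{\omega_\delta}\phi_\delta=n-{\rm tr}_{\omega_\delta}\omega_0=n-v$. At the maximum point this yields $(A-B_0)\,v\leq An-K$, hence $v\leq C_0(M,\omega_0,K)$ there, and propagating the bound with the uniform control on $|\phi_\delta|_{C^0}$ gives $v={\rm tr}_{\omega_\delta}\omega_0\leq C_1$ on all of $M$, with $C_1$ independent of $\delta$. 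Since a bound on ${\rm tr}_{\omega_\delta}\omega_0$ bounds the eigenvalues of $\omega_0$ relative to $\omega_\delta$ from above, this is exactly $\omega_\delta\geq \tfrac{1}{C_1}\omega_0$.

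For the upper bound I would work pointwise with the eigenvalues $\mu_1,\dots,\mu_n$ of $\omega_\delta$ with respect to $\omega_0$. The lower bound just obtained gives $\mu_i\geq 1/C_1$, while $\prod_i\mu_i=\omega_\delta^n/\omega_0^n$, so each $\mu_j\leq C_1^{\,n-1}\,\omega_\delta^n/\omega_0^n$. Reading off the volume ratio from \eqref{app} and discarding the nonnegative term $\delta\,e^{-\frac{1-K_i}{1-\beta^i}\phi_\delta}$ (which only enlarges the base of a negative power and hence decreases each factor), together with $\lambda\in[a,b]$, the $C^0$ bound on $\phi_\delta$, and the boundedness of $\Omega/\omega_0^n$, gives $\omega_\delta^n/\omega_0^n\leq C\prod_{i=1}^k|S_i|_{h_i}^{-2(1-\beta^i)}$. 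Therefore ${\rm tr}_{\omega_0}\omega_\delta=\sum_j\mu_j\leq C'\prod_{i=1}^k|S_i|_{h_i}^{-2(1-\beta^i)}$, which is precisely the asserted inequality $\omega_\delta\leq C\prod_{i}|S_i|_{h_i}^{-2(1-\beta^i)}\omega_0$.

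The main obstacle, and the point demanding care, is not the analytic machinery itself but ensuring that every constant is independent of $\delta$; this is exactly why the Ricci lower bound with $\delta$-independent $K$, the a priori range $\lambda\in[a,b]$, and the uniform bound $|\phi_\delta|_{C^0}\leq C$ are indispensable inputs. The degeneration of $\omega_\delta$ near $|D|$ as $\delta\to 0$ is harmless for the maximum principle, since for each fixed $\delta$ the metric is smooth and nondegenerate on all of $M$; it is precisely the uniformity of the estimates that converts the argument into the $\delta$-independent blow-up rate $\prod_i|S_i|_{h_i}^{-2(1-\beta^i)}$ recorded in the statement, which is what is needed for the subsequent Gromov--Hausdorff convergence.
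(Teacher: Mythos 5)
Your proposal is correct and follows essentially the same route as the paper: the Chern--Lu inequality applied to ${\rm tr}_{\omega_\delta}\omega_0$ together with the maximum principle on $\log {\rm tr}_{\omega_\delta}\omega_0-(B+1)\phi_\delta$ (using the $\delta$-independent Ricci lower bound and $C^0$ bound on $\phi_\delta$) gives the lower bound, and the upper bound is then read off from the Monge--Amp\`ere equation \eqref{app}. Your write-up of the second step via the eigenvalues $\mu_j$ simply makes explicit what the paper leaves as ``the upper bound follows from \eqref{app} and the lower bound of $\omega_\delta$.''
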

\begin{proof}
Since the Ricci curvature ${\rm Ric}(\omega_\delta)$ is bounded from below by $ K\,\omega_\delta$, by the Chern-Lu inequality, we have
$$\Delta_{\omega_\delta} \log {\rm tr}_{\omega_\delta} \omega_0\,\geq\, -\,K\,-\,B\,{\rm tr}_{\omega_\delta}\omega_0,$$
where $B$ is the upper bound of the bisectional curvature of $\omega_0$. Then we have
$$\Delta_{\omega_\delta}(\log {\rm tr}_{\omega_\delta}\omega_0\,-\,(B+1)\,\phi_\delta)\,\geq\, {\rm tr}_{\omega_\delta}\omega_0\,-\,n\,(B-1)\,-\,K.$$
So by the Maximum Principle, we get the lower bound of $\omega_\delta$. Then the upper bound follows from (\ref{app}) and lower bound of $\omega_\delta$.
\end{proof}
By the regularity theory on higher derivatives, $\phi_\delta$ converges to $\phi$ smoothly outside $|D|$.
The following proposition is standard to experts in the field in view of what we had above. It can be proved by following the arguments in the proof of a similar result in \cite{Ti15}.
\begin{prop}\label{ghc}
For the sequence $\phi_{\delta_i}$ as in Proposition \ref{cc}, $(M,\omega_{\delta_i})$ converges to $(M,\omega)$ in the Gromov-Hausdorff sense.
\end{prop}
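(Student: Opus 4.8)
The plan is to verify the three hypotheses of Gromov's precompactness theorem for the sequence $(M,\omega_{\delta_i})$, extract a limit via Cheeger-Colding theory, and then identify that limit with the metric completion of the conic metric $(M\setminus|D|,\omega)$. All three hypotheses are already at hand or nearly so: the Ricci lower bound ${\rm Ric}(\omega_{\delta_i})\ge K\omega_{\delta_i}$ is the content of the Ricci-curvature lemma above; the volumes are all equal to $V$ since $[\omega_{\delta_i}]=2\pi c_1(L)$ is fixed; and the last lemma, whose constant depends only on $|\phi_\delta|_{C^0}$ (uniformly controlled by the $C^\alpha$-estimate), gives the uniform two-sided bound $C^{-1}\omega_0\le\omega_{\delta_i}\le C\prod_i|S_i|_{h_i}^{-2(1-\beta^i)}\omega_0$.

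First I would establish a uniform diameter bound. The lower bound $\omega_{\delta_i}\ge C^{-1}\omega_0$ shows distances between distinct points cannot degenerate, while for an upper bound on the diameter I would join any two points by a path transverse to $|D|$ and estimate its $\omega_{\delta_i}$-length by the upper comparison: along such a path the singular weight $\prod_i|S_i|_{h_i}^{-(1-\beta^i)}$ is integrable because each $1-\beta^i<1$, so the length is bounded independently of $i$. A finite cover of the compact $M$ by such transverse paths yields ${\rm diam}(M,\omega_{\delta_i})\le D_0$. Combined with the Ricci lower bound and Bishop-Gromov volume comparison, the fixed total volume $V$ forces $\mathrm{Vol}(B(x,r))\ge V\,v_K(r)/v_K(D_0)>0$ for every metric ball, so the sequence is non-collapsed. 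By Gromov's theorem a subsequence converges in the Gromov-Hausdorff sense to a compact metric space $(Z,d_Z)$, which by Cheeger-Colding carries a limit measure $\nu$ and a regular-singular decomposition $Z=\mathcal R\sqcup\mathcal S$.

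Next I would identify $(Z,d_Z)$ with $(M,d_\omega)$, where $d_\omega$ is the length metric of the conic metric $\omega$, which is finite across $|D|$ since a cone metric puts its divisor at finite distance. Away from $|D|$ the potentials converge smoothly, so $\omega_{\delta_i}\to\omega$ in $C^\infty_{loc}(M\setminus|D|)$; by Colding's volume-convergence theorem $\nu$ is the weak limit of $\omega_{\delta_i}^n$, which equals $\omega^n$ because $|D|$ is $\omega^n$-null. On the regular set Cheeger-Colding-Tian theory gives $C^{1,\alpha}$-convergence of the metrics, identifying an open dense subset of $\mathcal R$ isometrically with $(M\setminus|D|,\omega)$; passing to metric completions then yields $Z=(M,d_\omega)$. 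Concretely this amounts to proving $d_{\omega_{\delta_i}}\to d_\omega$ uniformly on $M\setminus|D|$, and for the upper estimate one perturbs a near-minimizing $\omega$-geodesic off the real-codimension-two set $|D|$ at arbitrarily small length cost and then applies the smooth convergence along the perturbed path.

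The main obstacle is the matching lower estimate $d_{\omega_{\delta_i}}(x,y)\ge d_\omega(x,y)-o(1)$. The regularization caps the normal blow-up of $\omega$, so $\omega_{\delta_i}$ is strictly smaller than $\omega$ in a shrinking neighborhood of $|D|$, and one must rule out that a short $\omega_{\delta_i}$-path gains length by travelling along $|D|$. I would control this by showing the discrepancy is supported in the region $\{\prod_i|S_i|_{h_i}^2\le C\delta_i\}$, whose $\omega_{\delta_i}$-width and measure tend to zero, so the total length saved is $o(1)$ uniformly; alternatively one argues intrinsically from the metric-measure limit, using that volume is preserved and $|D|$ is $\nu$-null to conclude that $\mathcal S$ carries no distance. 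This is precisely the point at which the argument of \cite{Ti15} must be adapted to the present setting, and once it is in place the uniqueness of the limit $(M,d_\omega)$ promotes the subsequential convergence to convergence of the full sequence.
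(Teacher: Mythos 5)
Your overall architecture --- precompactness from the Ricci lower bound and non-collapsing, smooth convergence of $\omega_{\delta_i}$ off $|D|$, volume convergence, and identification of the Gromov--Hausdorff limit with the completion of $(M\setminus|D|,\omega)$ --- is the same as the paper's, and you correctly isolate the one delicate point: the lower bound $d_{\omega_{\delta_i}}\ge d_\omega-o(1)$ ruling out shortcuts through the divisor. The difference lies in how that point is settled. The paper builds a $1$-Lipschitz map $\iota:(M,\omega)\to(X,d)$ that is a local isometry on $M\setminus|D|$ (using that $M\setminus\bigcup D_i$ is a convex dense subset for the conic metric, which plays the role of your perturbation of near-minimizing geodesics off $|D|$), notes via volume convergence that $\iota(M\setminus|D|)$ is an open subset of full Hausdorff measure inside the regular part of $X$, and then invokes Cheeger--Colding's weak convexity (Theorem 3.7 in [CC2]): from any point of this set there are minimal geodesics to almost every point of $X$, so the induced and intrinsic distances on $\iota(M\setminus|D|)$ coincide and $\iota$ is an isometry. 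This is precisely your second, ``intrinsic'' alternative made rigorous. Your first alternative --- bounding the length a competing path could save by the shrinking width of $\{\prod_i|S_i|_{h_i}^2\le C\delta_i\}$ --- is the weaker route and is not actually carried out: the width of that region tending to zero does not by itself control the length discrepancy of a path running tangentially along $|D|$ inside it, and making this quantitative would require a two-sided tangential comparison of $\omega$ with the model cone metric that the Cheeger--Colding argument avoids entirely. If you adopt the weak-convexity route, your proof matches the paper's.
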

\begin{proof}
Let us outline a proof here for the readers' convenience. Fix a ball $B_p(r,\omega) \subset M\setminus D$. Since $\omega_{\delta_i}$ converges smoothly outside $D$, $B_p(r,\omega_{\delta_i})$ converges to $B_p(r,\omega)$ smoothly. In particular, the volume of $B_p(r,\omega_{\delta_i})$ has a definite lower bound. So $(M,\omega_{\delta_i})$ is locally non-collapsing, and we can consider the pointed Gromov-Hausdorff convergence. Taking any convergent subsequence of $(M,\omega_{\delta_i})$, denote the limit by $(X,d)$. Because $M\setminus \bigcup_{i=1}^k D_i$ is a convex dense subset of $M$ , and $\omega_{\delta_i}$ converges to $\omega$ smoothly in this set, we will have a 1-Lipschitz map
$$\iota: (M,\omega)\rightarrow (X,d),$$ which is a local isometry on $M\setminus D$. By the volume convergence, $\iota(M\setminus D)$ is a full $\mathcal{H}^n$ measure subset of $X$. Then we know that $\iota(M\setminus D)$ is a open and dense subset in the regular part of $X$. By Theorem 3.7 in \cite{CC2}, $\iota(M\setminus D)$ is weakly convex, i.e. for any point $x\in \iota(M\setminus D)$, there is a subset $C_x \subset X$ of measure zero such that there is a minimal geodesic from $x$ to any point outside $C_x$. It follows that the induced distance on $\iota(M\setminus D)$ is equal to the intrinsic distance. So $\iota$ is an isometry.
\end{proof}
\section{Basic estimates for conic K\"{a}hler-Einstein metrics}
Suppose that $\omega$ is a conic K\"{a}hler-Einstein metric on $M$ satisfying:
$${\rm Ric}(\omega)\,=\,t\,\omega\,+\,\sum_{i=1}^k\,(1-\beta^i)\,D_i,~~~ \omega\,\in\, 2\pi\, c_1(L),$$
where $L$ is $\mathbb{Q}$-line bundle and $t\in [\mu,+\infty)$ for some $\mu>0$. We are going to prove some basic estimates which will be used later. Denote the volume of $\omega$, which is a cohomological constant,  by
$$ V\,=\,(2\pi)^n\,\int_M\,c_1(L)^n.$$
Since $C^\infty_0(M\setminus \cup_{i=1}^k D_i)$ is dense in $H^1(M,\omega)$ and the Ricci curvature of $\omega$ is bounded from below by a positive constant, we
have that the Sobolev constant $C_s$ of $\omega$ is uniformly bounded by a constant $C=C(n,\mu,V).$

Now we give an application of the Moser iteration to conic K\"{a}hler-Einstein metrics.
\begin{lem}\label{moser}
Assume that $f$ is a non-negative function satisfying:
$$f\,\in \,L^\infty(M)\cap C^\infty(M\setminus \cup_{i=1}^k D_i)$$
and for some positive constant $A\geq1$,
\begin{equation}\label{inequ-0}
\Delta f\,\geq\, - \,A\,f,~~ \text{ in }M\setminus \cup_{i=1}^k D_i.
\end{equation}
Then we have
\begin{align}\label{inequ}
|f|_{L^\infty(M)}\,\leq\, C \,A^n\,|f|_{L^1(M,\omega)},
\end{align}
where $C$ is a constant depending on dimension $n$ and the Sobolev constant $C_S$ defined by: For any $f\in H^1(M,\omega)$, we have
$$\left(\int_M \,|f|^{\frac{2n}{n-1}}\,\omega^n\right)^{\frac{n-1}{n}}\,\leq\, C_S\,\left(\int_M\, |\nabla f|^2\,\omega^n\,+\,\int_M \,|f|^2\,\omega^n\right). $$
\end{lem}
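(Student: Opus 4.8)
The plan is to run a Moser iteration adapted to the conic setting; the only essential difficulty is that both the differential inequality $\Delta f\ge -Af$ and the smoothness of $f$ hold only on $M\setminus\cup_i D_i$, so every integration by parts must be justified across the singular set. For this I would use the density of $C^\infty_0(M\setminus\cup_i D_i)$ in $H^1(M,\omega)$ recorded above: applied to the constant function $1\in H^1(M,\omega)$ it produces cutoffs $\eta_j\in C^\infty_0(M\setminus\cup_i D_i)$ with $0\le\eta_j\le1$, $\eta_j\to 1$ almost everywhere, and $\int_M|\nabla\eta_j|^2\,\omega^n\to0$. Geometrically this reflects that each $D_i$ has real codimension two and that, in terms of the conic distance $\rho$ to $D_i$, the transverse area element scales like $\rho\,d\rho$ exactly as in the Euclidean codimension-two case, so the usual logarithmic cutoffs have vanishing Dirichlet energy.

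Next I would derive a Caccioppoli estimate. Fix $p\ge2$. Since $\eta_j^2 f^{p-1}$ is smooth and compactly supported in $M\setminus\cup_i D_i$, I may multiply \eqref{inequ-0} by it and integrate by parts to get
\[
(p-1)\int_M \eta_j^2\, f^{p-2}\,|\nabla f|^2\,\omega^n \,\le\, A\int_M \eta_j^2\, f^p\,\omega^n \,-\, 2\int_M \eta_j\, f^{p-1}\,\langle\nabla\eta_j,\nabla f\rangle\,\omega^n.
\]
Young's inequality absorbs the cross term into the left side, leaving
\[
\frac{p-1}{2}\int_M \eta_j^2\, f^{p-2}\,|\nabla f|^2\,\omega^n \,\le\, A\int_M f^p\,\omega^n \,+\, \frac{2}{p-1}\int_M f^p\,|\nabla\eta_j|^2\,\omega^n.
\]
Because $f\in L^\infty(M)$ and $M$ is compact, $\int_M f^p\,|\nabla\eta_j|^2\,\omega^n\le\|f\|_{L^\infty}^p\int_M|\nabla\eta_j|^2\,\omega^n\to0$; letting $j\to\infty$ and applying Fatou to the nonnegative left-hand integrand gives $\int_M f^{p-2}|\nabla f|^2\,\omega^n\le\frac{2A}{p-1}\int_M f^p\,\omega^n$, and in particular $f^{p/2}\in H^1(M,\omega)$.

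Now the Sobolev inequality and iteration. With $g=f^{p/2}$, so $|\nabla g|^2=\frac{p^2}{4}f^{p-2}|\nabla f|^2$, the Sobolev inequality together with $A\ge1$ and $p\ge2$ gives, writing $\chi=\frac{n}{n-1}$,
\[
\Big(\int_M f^{p\chi}\,\omega^n\Big)^{1/\chi} \,\le\, C_S\Big(\frac{p^2 A}{2(p-1)}+1\Big)\int_M f^p\,\omega^n \,\le\, C_S\,C(n)\,p\,A\int_M f^p\,\omega^n,
\]
that is $\|f\|_{L^{p\chi}}\le (C_S\,C(n)\,pA)^{1/p}\,\|f\|_{L^p}$. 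Iterating from $p_0=2$ with $p_k=2\chi^k$, and using $\sum_k p_k^{-1}=n/2$ together with the convergence of $\prod_k p_k^{1/p_k}$ to a constant depending only on $n$, I obtain $\|f\|_{L^\infty(M)}\le C(n)\,(C_S A)^{n/2}\,\|f\|_{L^2(M,\omega)}$.

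Finally I would interpolate down to $L^1$: since $f\ge0$, $\|f\|_{L^2}^2\le\|f\|_{L^\infty}\|f\|_{L^1}$, so substituting and dividing by the finite quantity $\|f\|_{L^\infty}^{1/2}$ yields $\|f\|_{L^\infty(M)}\le C(n,C_S)\,A^n\,\|f\|_{L^1(M,\omega)}$, which is \eqref{inequ}. I expect the only genuine obstacle to be the first step, namely producing cutoffs with vanishing Dirichlet energy and verifying that every boundary contribution from the integration by parts across $\cup_i D_i$ really vanishes in the limit; once the Caccioppoli estimate holds, the rest is the standard Moser/De Giorgi iteration and the power $A^n$ emerges automatically from $\sum_k p_k^{-1}=n/2$ followed by the interpolation.
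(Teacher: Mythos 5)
Your proposal is correct and follows essentially the same route as the paper: cut-off functions supported away from $\cup_i D_i$ with Dirichlet energy tending to zero (the paper builds them explicitly as $\prod_i\tilde\eta(\epsilon\log(-\log|S_i|_{h_i}^2))$), the a priori bound $f\in L^\infty$ to kill the cross/error terms in the integration by parts, then the Sobolev inequality, Moser iteration from $p=2$, and an interpolation step to pass from $L^2$ to $L^1$ yielding the exponent $A^n$. The only cosmetic difference is that you absorb the cross term by Young's inequality and interpolate via $\|f\|_{L^2}^2\le\|f\|_{L^\infty}\|f\|_{L^1}$, while the paper sends the cross term to zero directly and interpolates through $\|f\|_{L^{2\tau}}$; both give the same estimate.
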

\begin{proof}
As before, we denote by $h_i$ a smooth Hermitian metric on the line bundle $L_{D_i}$ associated to $D_i$ and $S_i$ a defining section of $D_i$.
Let $\tilde\eta$ be a cut-off function on $\mathbb R$ satisfying:
$\tilde\eta(t)\,=\,1$ for $t\geq 2$, $\tilde\eta(t)\,=\,0$ for $t\leq 1$,
and $|\tilde\eta'|\,\le\, 2$.

Define
$$\gamma_\epsilon\,=\,\prod_{i=1}^k\,\tilde\eta(\epsilon \log(-\log |S_i|_{h_i}^2)).$$
Then $\gamma_\epsilon$ has its support in $M\setminus \cup_{i=1}^k D_i$ satisfying:
$$\int_M\,|\nabla \gamma_\epsilon|^2\,\omega^n\leq C\epsilon,$$
where $C$ is a constant. Multiplying both sides of (\ref{inequ-0}) by $\gamma_\epsilon^2 f^{p-1}$, we have
$$-\,\gamma_\epsilon^2\, f^{p-1}\,\Delta f\, \leq \,A\,\gamma_\epsilon^2\, f^p.$$
Integrating by parts, we have
\begin{align}\label{integral}
\frac{4(p-1)}{p^2}\int_M \,\gamma_\epsilon^2\, |\nabla f^{\frac{p}{2}}|^2\,\omega^n\,+\,2\,\int_M\, \gamma_\epsilon f^{p-1} \langle \nabla f,\nabla \gamma_\epsilon\rangle\,\omega^n\,\leq\, A\,\int_M \,\gamma_\epsilon^2 f^p\,\omega^n.
\end{align}
Assume that $|f|\leq Q$, we have
\begin{align}
\left|\int_M \,\gamma_\epsilon \,f^{p-1}\,\langle \nabla f,\nabla \gamma_\epsilon\rangle\,\omega^n\right |&\leq\, \left(\int_M\, \gamma^2_\epsilon \,f^{2p-2}\,|\nabla f|^2\,\omega^n\right)^\frac{1}{2}\left(\int_M\,|\nabla \gamma_\epsilon|^2\,\omega^n\right)^\frac{1}{2} \notag\\
&\leq\, Q^\frac{p}{2}\,\left(\int_M \,\gamma^2_\epsilon \,f^{p-2}\,|\nabla f|^2\,\omega^n\right)^\frac{1}{2}\left(\int_M\,|\nabla \gamma_\epsilon|^2\,\omega^n\right)^\frac{1}{2}\notag\\
&\leq\, C\,Q^\frac{p}{2}\,\sqrt{\epsilon}\,\left(\int_M \,\gamma^2_\epsilon \,f^{p-2}\,|\nabla f|^2\,\omega^n\right )^\frac{1}{2}\notag.
\end{align}
Let $\epsilon\rightarrow 0$, from (\ref{integral}), we have
\begin{align}
\frac{4(p-1)}{p^2}\,\int_M \,\left|\nabla f^{\frac{p}{2}}\right|^2\,\omega^n\,\leq\, A\,\int_M\, f^p\,\omega^n.
\end{align}
Using the Sobolev inequality, we have
$$\frac{4(p-1)}{p^2}\,\left[\frac{1}{C_S}\,\left(\int_M \,f^{\frac{np}{n-1}}\,\omega^n\right)^\frac{n-1}{n}\,-\,\int_M\, f^p\,\omega^n\right]\,\leq\, A\,\int_M \,f^p\,\omega^n.$$
Thus for $\tau =\frac{n}{n-1}$,
 we obtain
\begin{align}\label{lp}
|f|_{L^{p\tau}}\,\leq\, (A\,p\,C_S)^{\frac{1}{p}}\,|f|_{L^p},~~~ p\,\geq\, 2.
\end{align}
Putting $p_k=2\tau^{k-1}$, we get
$$|f|_{L^{p_k}}\,\leq\, (A\, p_{k-1}C_S)^{\frac{1}{p_{k-1}}}\,|f|_{L^{p_{k-1}}}.$$
Taking $k\rightarrow \infty$, we have
$$|f|_{L^\infty}\,\leq\, A^{\frac{n}{2}}\,C\,|f|_{L^2},$$
where $C = C(C_S,n)$ depends only on $C_S$ and $n$. Taking $p=2$ in (\ref{lp}), using the H\"{o}lder inequality
$$|f|^{\frac{n+1}{n}}_{L^2}\,\leq\, |f|_{L^{2\gamma}}\,|f|^{\frac{1}{n}}_{L^1},$$ we have
$$|f|_{L^2}\,\leq\, C \,A^\frac{n}{2}\, |f|_{L^1}.$$ The lemma is proved.
\end{proof}
Since $L$ is $\mathbb{Q}$-line bundle, we can choose an integer $l_0$ such that $L^{l_0}$ is a line bundle and take a hermitian metric on $L^{l_0}$ such that $$R(h_0)\,=\,l_0\,\omega.$$
In the sequel, we always take $l$-th power of $L$ for $l=ml_0$ being a multiple of $l_0$.
So for simplicity, we can think that there is a hermitian metric $h$ on $L$ with its curvature $R(h)$ being $\omega$.
Now we give the gradient estimate of holomorphic sections for conic metrics.
\begin{lem}\label{gradient}
For $s\in H^0(M, L^l)$, we have
\begin{align}\label{section}
|s|_{h}\,+\,l^{-\frac{1}{2}}\,|\nabla s|_{\omega\otimes h}\,\leq\, C\,l^{\frac{n}{2}}\,\left(\int_M\,|s|^2\,\omega^n\right )^{\frac{1}{2}},
\end{align}
where $C$ is again a constant depending only on $n$ and the Sobolev constant $C_S$ of $(M,\omega)$.
\end{lem}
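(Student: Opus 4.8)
The plan is to run the Moser iteration of Lemma~\ref{moser} twice, once for $|s|_h^2$ and once for $|\nabla s|^2_{\omega\otimes h}$, controlling in each case the relevant zeroth-order coefficient by a constant multiple of $l$. First I would record the two Bochner identities on $M\setminus\cup_i D_i$, where $\omega$ and the Chern connection of $(L,h)$ are smooth. Since $s$ is holomorphic and the curvature of $L^l$ equals $l\omega$, a direct computation gives $\Delta|s|_h^2 = |\nabla s|^2 - nl\,|s|_h^2$ (up to the normalization of $\Delta$), so in particular $\Delta|s|_h^2 \ge -nl\,|s|_h^2$. As $|s|_h^2$ is bounded on $M$ (because $\phi_{KE}$ is bounded) and smooth off $|D|$, Lemma~\ref{moser} applies with $A=nl$ (enlarged to $\max\{nl,1\}$ if needed), yielding $\||s|_h^2\|_{L^\infty}\le C\,l^n\int_M|s|^2\,\omega^n$. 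Taking square roots gives the first summand of \eqref{section}.

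Next, integrating the identity $\Delta|s|_h^2=|\nabla s|^2-nl\,|s|_h^2$ over $M$ and discarding the total Laplacian (which I justify by the cutoff $\gamma_\epsilon$ of Lemma~\ref{moser} together with $|\nabla s|\in L^2$) gives the $L^1$ bound $\int_M|\nabla s|^2\,\omega^n = nl\int_M|s|^2\,\omega^n$. For the pointwise bound on $g:=|\nabla s|^2$ I would use the Weitzenb\"ock formula for the $L^l$-valued $(1,0)$-form $\nabla s=\nabla^{1,0}s$. Commuting covariant derivatives produces the curvature of $L^l$, which is $l\omega$, together with the Ricci curvature of $\omega$ acting on the form index; since $\mathrm{Ric}(\omega)=t\omega\ge 0$ off $|D|$, the Ricci contribution has a favourable sign, while the remaining negative terms are of order $l\,g$. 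Together with the nonnegative Hessian terms this gives $\Delta g\ge -C\,l\,g$ on $M\setminus\cup_i D_i$. A second application of Lemma~\ref{moser}, now with $A\asymp l$, yields $\|g\|_{L^\infty}\le C\,l^n\int_M|\nabla s|^2\,\omega^n = C\,l^{n+1}\int_M|s|^2\,\omega^n$; taking square roots and multiplying by $l^{-1/2}$ produces the second summand of \eqref{section}, with $C$ depending only on $n$ and $C_S$ as claimed.

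The main obstacle is the behaviour across the conic divisor $D$, since all the identities above hold only on $M\setminus|D|$. I must check that $|s|_h^2$ and $g=|\nabla s|^2$ genuinely lie in $L^\infty(M)\cap C^\infty(M\setminus|D|)$ before quoting Lemma~\ref{moser}; for $g$ this is the delicate point, and I would verify it from the conic normal form, where the factor $g^{i\bar i}\sim|z_i|^{2(1-\beta^i)}$ degenerates fast enough to offset the growth of the connection coefficients of $(L^l,h)$ coming from the only H\"older-regular potential $\phi_{KE}$, so that $g$ in fact stays bounded (indeed tends to $0$) along each $D_i$. The integration by parts in the $L^1$ step and the Moser iteration itself then require no new boundary input, since the admissible cutoffs $\gamma_\epsilon$ with $\int_M|\nabla\gamma_\epsilon|^2\,\omega^n\le C\epsilon$ constructed in Lemma~\ref{moser} already absorb the singular locus in the limit $\epsilon\to 0$.
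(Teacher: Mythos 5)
Your proof is correct and follows essentially the same route as the paper: the two Bochner inequalities $\Delta|s|_h^2\ge -nl\,|s|_h^2$ and $\Delta|\nabla s|^2\ge -l(n+2)|\nabla s|^2$ off $|D|$, the intermediate bound $\int_M|\nabla s|^2\,\omega^n=nl\int_M|s|^2\,\omega^n$, and two applications of Lemma~\ref{moser}. Your extra care about the boundedness of $|\nabla s|^2$ across the conic locus is a point the paper leaves implicit, but it is the right thing to check and your verification via the conic normal form is sound.
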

\begin{proof}
In $M\setminus \bigcup D_i$, it holds that
$$\Delta  |s|^2_{h}\,\geq \,-\,n l\,|s|^2_{h}$$
and
$$\Delta |\nabla s |_{\omega\otimes h}^2\,\geq\, -\,l(n+2)\,|\nabla s |_{\omega\otimes h}^2.$$
Then \eqref{section} follows from Lemma \ref{moser}.
\end{proof}
We also need H\"{o}rmander's $L^2$ estimate.
\begin{lem}\label{partial-bar}
For any  $v\in C^\infty(\Gamma(M,L^l))$, there exists a solution $\sigma\in C^\infty(\Gamma(M,L^l))$ such that
$\bar{\partial}\sigma=\bar{\partial}v$ with property:
\begin{align}\label{L^2}
\int_M\,|\sigma|^2\,\omega^n\,\leq\, l^{-1}\,\int_M\,|\bar{\partial}v|^2\,\omega^n.
\end{align}
\end{lem}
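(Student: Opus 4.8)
The plan is to deduce this from Hörmander's $L^2$-existence theorem for $\bar\partial$, with the required curvature positivity coming directly from the Kähler-Einstein equation and the twisting metric $h$. The only genuine subtlety is that $(M\setminus\bigcup_i D_i,\omega)$ is \emph{incomplete} near the divisors, so one cannot invoke the complete-manifold version of Hörmander's theorem as a black box; this incompleteness is where the work lies, and it is handled using exactly the capacity-zero cutoffs already built in Lemma \ref{moser}.

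First I would reformulate the problem in the standard Kodaira-Nakano form. An $L^l$-valued $(0,1)$-form is the same as an $(n,1)$-form with values in $E:=L^l\otimes K_M^{-1}$, and on $M\setminus\bigcup_i D_i$ the curvature of $E$ is
$$\sqrt{-1}\,\Theta(E)\,=\,l\,R(h)\,+\,{\rm Ric}(\omega)\,=\,l\,\omega\,+\,t\,\omega\,=\,(l+t)\,\omega,$$
since $R(h)=\omega$ and the current part $\sum_i(1-\beta^i)D_i$ of ${\rm Ric}(\omega)$ is supported on $D$ and hence vanishes away from it. Because $\Theta(E)$ is a positive multiple of $\omega$, the Bochner-Kodaira curvature operator $A:=[\sqrt{-1}\,\Theta(E),\Lambda]$ acting on $E$-valued $(n,1)$-forms equals $(l+t)\,\mathrm{Id}\ge l\,\mathrm{Id}$, the point being that for $(n,1)$-forms the eigenvalues of $A$ are precisely the eigenvalues of $\Theta(E)$ relative to $\omega$.

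The core step is the a priori estimate. For smooth $E$-valued $(n,1)$-forms $\psi$ compactly supported in $M\setminus\bigcup_i D_i$, the Bochner-Kodaira-Nakano identity on the Kähler manifold $(M\setminus\bigcup_i D_i,\omega)$ gives
$$\|\bar\partial\psi\|^2\,+\,\|\bar\partial^*\psi\|^2\,\ge\,\int\langle A\psi,\psi\rangle\,\ge\,l\,\|\psi\|^2.$$
To upgrade this from compactly supported forms to all $\psi\in\mathrm{Dom}(\bar\partial)\cap\mathrm{Dom}(\bar\partial^*)$ despite incompleteness, I would multiply by the cutoffs $\gamma_\epsilon=\prod_i\tilde\eta(\epsilon\log(-\log|S_i|_{h_i}^2))$ from Lemma \ref{moser}, which satisfy $\gamma_\epsilon\to1$ and $\int_M|\nabla\gamma_\epsilon|^2\,\omega^n\le C\epsilon\to0$. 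The commutators of $\bar\partial$ and $\bar\partial^*$ with $\gamma_\epsilon$ produce error terms controlled by $\int|\nabla\gamma_\epsilon|^2|\psi|^2$, which tend to $0$; this shows compactly supported forms are dense in the graph norm, so the a priori inequality persists on the full domain. This cutoff argument, absorbing the incompleteness, is the main obstacle.

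With the a priori estimate in hand, the conclusion is the standard Hörmander-Andreotti-Vesentini functional-analytic argument: writing $\alpha:=\bar\partial v$, which is $\bar\partial$-closed and in $L^2$, one defines the functional $\psi\mapsto\langle\alpha,\psi\rangle$ on $\overline{\mathrm{Im}\,\bar\partial^*}$, bounds it by $\|\bar\partial^*\psi\|\cdot\|\alpha\|/\sqrt{l}$ using the estimate above together with the orthogonal decomposition $\psi=\psi_1+\psi_2$ with $\psi_1\in\ker\bar\partial$, and applies Riesz representation to obtain $\sigma$ with $\bar\partial\sigma=\alpha$ and $\int_M|\sigma|^2\,\omega^n\le l^{-1}\int_M|\bar\partial v|^2\,\omega^n$. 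Finally, for smoothness on all of $M$: since $\bar\partial(\sigma-v)=0$ weakly, $\sigma-v$ is a weakly holomorphic, $L^2$ section of $L^l$ on $M\setminus D$, and as $D$ is an analytic set of measure zero it extends to a genuine holomorphic section over $M$ by the $L^2$ removable singularity theorem; hence $\sigma=v+(\sigma-v)\in C^\infty(\Gamma(M,L^l))$, completing the proof.
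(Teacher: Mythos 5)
Your proposal is correct and follows essentially the same route as the paper: both derive the coercivity estimate $\|\bar\partial\psi\|^2+\|\bar\partial^*\psi\|^2\ge l\|\psi\|^2$ from the curvature positivity of the twisted bundle and justify it across the incomplete locus near $\bigcup_i D_i$ using the same capacity-zero cutoffs $\gamma_\epsilon$ from Lemma \ref{moser}. The only (cosmetic) differences are that the paper phrases the Bochner identity directly for $L^l$-valued $(0,1)$-forms and solves the equation by inverting $\Delta_{\bar\partial}$ ($\sigma=\bar\partial^*\Delta_{\bar\partial}^{-1}\bar\partial v$), whereas you pass to $(n,1)$-forms with values in $L^l\otimes K_M^{-1}$ and use the Riesz-representation form of H\"ormander's argument.
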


\begin{proof}
In $M\setminus \bigcup_{i=1}^k D_i$, we have the Weitzenb\"{o}ch formula
$$\Delta_{\bar\partial}\theta\,=\,{\bar\nabla}^*\bar\nabla \theta\,+\,{\omega}\,(\theta,\cdot)\,+\,l\,\theta$$ for $\theta\in \Lambda^{0,1}(L^{l})$. Let $\gamma_\epsilon$ be the cut-off function supported in $M\setminus \bigcup_{i=1}^k D_i$ as above.

Multiplying both sides of the above identity by $\gamma_\epsilon\theta$ and integrating by parts, we have
$$\int_M\, \gamma_\epsilon\langle \Delta_{\bar\partial}\theta, \theta\rangle\, \omega^n\,=\,\int_M\,\left (l \gamma_\epsilon|\theta|^2
+ \gamma_\epsilon(\omega\otimes h_j)(\theta,\theta)+\gamma_\epsilon|\bar\nabla\theta|^2- \langle (\bar\nabla \gamma_\epsilon,\bar\nabla \theta)_{\omega},\theta\rangle_{h}\right )\,\omega^n.$$
Taking $\epsilon\rightarrow 0$, noticing that
$$\int_M \langle (\bar\nabla \gamma_\epsilon,\bar\nabla \theta)_{\omega},\theta\rangle_{h}\,\leq\, |\theta|_{L^\infty}|\bar\nabla\theta|_{L^\infty}\,\left(\int_M|\nabla \gamma_\epsilon|^2\omega_j^n\right)^{\frac{1}{2}}\rightarrow 0,$$
we get
$$\int_M \langle \Delta_{\bar\partial}\theta, \theta\rangle \,\omega^n \,\geq \,l\int_M|\theta|^2\omega^n.$$
Since $\Lambda^{p,q}(L^l)$ is dense in $L^2(L^l\otimes \Omega^{p,q})$, we know that $\Delta_{\bar\partial}$ is an invertible operator. Putting $\sigma=\bar\partial^*\Delta_{\bar\partial}^{-1}\bar\partial v$, the lemma is proved.
\end{proof}

Like the eigenvalue estimate for smooth metrics, we have
\begin{lem}\label{eigenvalue}
The first eigenvalue of $(M,\omega)$ is at least $t$.
\end{lem}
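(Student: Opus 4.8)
The plan is to establish the bound by the Bochner--Weitzenb\"ock method, exactly as in the smooth K\"ahler--Einstein case, with the conic singularities handled by the same cut-off technique already used in Lemmas \ref{moser} and \ref{partial-bar}. The key preliminary observation is that, although ${\rm Ric}(\omega)=t\,\omega+\sum_i(1-\beta^i)[D_i]$ holds only in the sense of currents, the divisor term is supported on $\cup_i D_i$ and is a \emph{positive} $(1,1)$-current (because $1-\beta^i>0$); hence on the open manifold $M\setminus\cup_i D_i$ the metric is a genuine smooth K\"ahler--Einstein metric with ${\rm Ric}(\omega)=t\,\omega$ pointwise. Let $u$ be a first nonconstant eigenfunction of the Laplacian of $(M,\omega)$, whose existence follows from the discreteness of the spectrum (the uniform Sobolev constant above gives a compact embedding); normalize it by $\int_M u\,\omega^n=0$, $\int_M u^2\,\omega^n=1$, so that $\bar\partial^*\bar\partial u=\lambda_1 u$. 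By interior elliptic regularity $u$ is smooth on $M\setminus\cup_i D_i$, and since $C^\infty_0(M\setminus\cup_i D_i)$ is dense in $H^1(M,\omega)$ it suffices to argue on the smooth part.

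Next I would differentiate the eigenvalue equation. Because $\bar\partial\bar\partial u=0$, the $(0,1)$-form $\alpha:=\bar\partial u$ satisfies $\Delta_{\bar\partial}\alpha=\lambda_1\alpha$. Applying the Bochner--Kodaira formula on $(0,1)$-forms (the bundle-free version of the Weitzenb\"ock identity already invoked in Lemma \ref{partial-bar}),
\begin{align}
\Delta_{\bar\partial}\alpha\,=\,\bar\nabla^*\bar\nabla\alpha\,+\,{\rm Ric}(\omega)(\alpha,\alpha),\notag
\end{align}
I would pair with $\gamma_\epsilon^2\,\alpha$, where $\gamma_\epsilon$ is the cut-off of Lemma \ref{moser} (equal to $1$ away from $\cup_i D_i$, with $\int_M|\nabla\gamma_\epsilon|^2\,\omega^n\le C\epsilon$), and integrate by parts to obtain
\begin{align}
\lambda_1\int_M\gamma_\epsilon^2\,|\alpha|^2\,\omega^n\,=\,\int_M\gamma_\epsilon^2\,|\bar\nabla\alpha|^2\,\omega^n\,+\,\int_M\gamma_\epsilon^2\,{\rm Ric}(\omega)(\alpha,\alpha)\,\omega^n\,+\,E_\epsilon,\notag
\end{align}
where $E_\epsilon$ collects the cross terms involving $\bar\nabla\gamma_\epsilon$. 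On the support of $\gamma_\epsilon$ one has ${\rm Ric}(\omega)=t\,\omega$, so ${\rm Ric}(\omega)(\alpha,\alpha)=t\,|\alpha|^2$. Letting $\epsilon\to0$ and discarding the nonnegative term $\int\gamma_\epsilon^2|\bar\nabla\alpha|^2$ yields $\lambda_1\int_M|\alpha|^2\,\omega^n\ge t\int_M|\alpha|^2\,\omega^n$; since $u$ is nonconstant, $\int_M|\alpha|^2\,\omega^n=\int_M|\bar\partial u|^2\,\omega^n>0$, and therefore $\lambda_1\ge t$.

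The main obstacle is the control of the error term $E_\epsilon$ as $\epsilon\to0$, that is, the justification of integration by parts across the divisor. As in the proof of Lemma \ref{partial-bar}, $E_\epsilon$ is bounded by a constant multiple of $\bigl(\int_M|\nabla\gamma_\epsilon|^2\,\omega^n\bigr)^{1/2}\le C\sqrt\epsilon$ times norms of $\alpha$ and $\bar\nabla\alpha$, so it suffices to know that $\bar\partial u$ and $\bar\nabla\bar\partial u$ have the requisite integrability near $\cup_i D_i$. This I would extract from the conic structure: near a point of $D_i$ the metric $\omega$ is uniformly equivalent to the model cone $\omega_{cone}$, and the conic Schauder and $L^2$ estimates for the eigenfunction $u$ furnish the bounds on $\alpha$ and $\bar\nabla\alpha$ that force $E_\epsilon\to0$. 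Once the boundary contributions are shown to vanish, the displayed inequality is rigorous and the lemma follows; note that the positivity $1-\beta^i>0$ enters only to ensure that the omitted divisor current is a favorable (nonnegative) term, so that passing to the smooth part loses nothing.
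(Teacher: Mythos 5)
Your core computation is exactly the paper's: differentiate the eigenvalue equation, apply the Weitzenb\"ock formula to $\bar\partial u$ on $M\setminus\cup_iD_i$ (where ${\rm Ric}(\omega)=t\,\omega$ genuinely holds), cut off with $\gamma_\epsilon$, and let $\epsilon\to0$ to get $(\lambda_1-t)\int_M|\bar\partial u|^2\,\omega^n\ge\int_M|\bar\nabla\bar\partial u|^2\,\omega^n\ge 0$. The difference, and the gap, is in how you justify discarding the error term. You propose to get bounds on both $\bar\partial u$ \emph{and} $\bar\nabla\bar\partial u$ near the divisor from ``conic Schauder and $L^2$ estimates.'' Two problems. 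First, a pointwise bound on $\bar\nabla\bar\partial u$ near $\cup_iD_i$ is in general \emph{false}: already on the model cone $\mathbb{C}_\beta$ the leading nonconstant harmonic mode behaves like $r^{1/\beta}$, whose second derivatives blow up like $r^{1/\beta-2}$ once $\beta>1/2$; so an error estimate of the form $C\sqrt{\epsilon}\cdot\|\bar\nabla\alpha\|_{L^\infty}$ is not available, and bounding $E_\epsilon$ by $C\sqrt{\epsilon}$ times the global $L^2$ norm of $\bar\nabla\alpha$ is circular, since the finiteness of $\int_M|\bar\nabla\bar\partial u|^2$ is part of what the argument is establishing. The correct move --- and what the paper does --- is Cauchy--Schwarz with a small parameter $\eta$: the cross term is bounded by $\eta\int\gamma_\epsilon^2|\bar\nabla\bar\partial u|^2+\frac{C}{\eta}\int|\nabla\gamma_\epsilon|^2$, the first piece is absorbed into the left-hand side, and one sends $\epsilon\to0$ before $\eta\to0$. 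This only requires $|\bar\partial u|\in L^\infty$.

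Second, even that Lipschitz bound on $u$ is not something the paper extracts from local conic elliptic theory (which for normal-crossing divisors with general angles is not developed here and is delicate). Instead the paper uses its own Theorem \ref{ricci}: it takes the smooth $K$-approximations $\omega_j$ of $\omega$, whose first eigenfunctions $u_j$ are uniformly Lipschitz by standard gradient estimates for smooth metrics with Ricci bounded below, shows by a variational comparison that $\lim_j\lambda_1^j=\lambda_1(\omega)$ so that the limit $u=\lim u_j$ really is a first eigenfunction of $(M,\omega)$, and thereby inherits $|\nabla u|\le C$. If you want your proof to stand on its own, you must either carry out this approximation step or supply a genuine proof of the gradient estimate for eigenfunctions of normal-crossing conic metrics; as written, the key analytic input is asserted rather than proved, and the part of it concerning $\bar\nabla\bar\partial u$ is not even true pointwise.
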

\begin{proof}
By Theorem \ref{ricci}, there is a smooth approximation $\omega_j$ of $\omega$. The first eigenvalues $\lambda_1^j$ of $\omega_j$ are uniformly bounded and the first eigenfunctions $u_j$ are uniformly Lipschitz. By subsequence, $u_j$ converges to an eigenfunction $u$ on $(M,\omega)$ with eigenvalue
\begin{align}\label{eigen}
\lambda\,=\,\lim_{j\rightarrow\infty} \lambda_1^j.
\end{align}
If $\lambda > \lambda_1$, we will have a function $v$ such that
$$\int_M \,|\nabla v|^2\,\omega^n\,=\,\lambda_1,\,\,\,\,\,\, \int_M v^2\omega^n\,=\,1.$$
It follows from the smooth convergence that
$$\lim_{j\rightarrow \infty} \int_M \,|\nabla v|^2\,\omega_j^n\,=\,\lambda_1,~~~~ \lim_{j\rightarrow \infty}\int_M\, v^2\omega_j^n\,=\,1.$$ So for any $\epsilon>0$, we can find an eigenfunction $v_j$ with eigenvalue not bigger than $\lambda_1+\epsilon$ when $j$ is large enough. It contradicts to (\ref{eigen}). So we know that the first eigenfunction $u$ is the first eigenfunction. In particular, we have
$$|\nabla u|\,\leq\, C, $$
where $C=C(n,C_S)$ is some constant depending on only $n$ and $C_S$.

Now by the Weitzenb\"{o}ch formula, we have
$$\Delta_{\bar\partial} \bar\partial u\,=\,\bar\nabla^* \bar\nabla \bar\partial u\,+\,{\rm Ric}\,(\bar\partial u,).$$
Let $\gamma_\epsilon$ be the cut-off function as above and multiply both sides of by $\gamma_\epsilon^2\bar\partial u$, we have
$$\int_M\gamma^2_\epsilon \langle\Delta \bar\partial u,\bar\partial u\rangle\,\omega^n\,=\,
\int_M\,\langle\bar\nabla^* \bar\nabla \bar\partial u, \gamma^2_\epsilon\bar\partial u\rangle\,\omega^n\,+\,t\int_M\gamma^2_\epsilon|\bar\partial u|^2\,\omega^n.$$
Since $\omega$ is a conic K\"{a}hler-Einstein metric,
Notice that
$$\int_M\gamma^2_\epsilon \langle\Delta \bar\partial u,\bar\partial u\rangle\,\omega^n\,=\,\int_M\gamma^2_\epsilon \langle\bar\partial \Delta u,\bar\partial u\rangle\,\omega^n\,=\,\lambda_1\int_M\gamma^2_\epsilon|\bar\partial u|^2\,\omega^n.$$
Integration by parts, we have
$$\int_M\langle\bar\nabla^* \bar\nabla \bar\partial u, \gamma^2_\epsilon\bar\partial u\rangle\,\omega^n\,=\,\int_M\gamma^2_\epsilon\langle \bar\nabla \bar\partial u,\bar\nabla \bar\partial u\rangle\,\omega^n\,+\,2\int_M \langle \gamma_\epsilon
\bar\nabla \bar\partial u,\bar\nabla \gamma_\epsilon\otimes\bar
\partial u\rangle\,\omega^n.$$
Since
$$2|\int_M \langle \gamma_\epsilon\bar\nabla \bar\partial u,\bar\nabla \gamma_\epsilon\otimes\partial u\rangle\,\omega^n|\,\leq\, \eta\int_M\gamma^2_\epsilon\langle \bar\nabla \bar\partial u,\bar\nabla \bar\partial u\rangle\,\omega^n+ \frac{C}{\eta}\int_M|\nabla \gamma_\epsilon|^2\omega^n,$$
we get $$(1-\eta)\int_M\gamma^2_\epsilon\langle \bar\nabla \bar\partial u,\bar\nabla \bar\partial u\rangle\,\omega^n\,\leq\, (\lambda-t)\,\int_M\gamma^2_\epsilon|\bar\partial u|^2\,\omega^n+\frac{C}{\eta}\int_M|\nabla \gamma_\epsilon|^2\,\omega^n.$$
Taking $\epsilon\rightarrow 0$, and then $\eta\rightarrow 0$, we obtain
\begin{align}\label{bochner}
\int_M\,\langle \bar\nabla \bar\partial u,\bar\nabla \bar\partial u\rangle\,\omega^n\,\leq\, (\lambda_1-t)\int_M|\bar\partial u|^2\,\omega^n.
\end{align}
So $\lambda_1\geq t.$
\end{proof}
\section{Compactness and Partial $C^0$-estimate}
Let $L$ be a $\mathbb{Q}$-line bundle on $M$. Define
$$\mathcal{M}(K,A)\,=\,\{\,(M,\omega)\,|\,\omega\,\in\, 2\pi\, c_1(L),~~ {\rm Ric}(\omega)\,\geq\, K\,\omega,~~ {\rm diam}(M,\omega)\,\leq\, A\,\}.$$
The volume of any metrics in $\mathcal{M}(K,A)$ is a fixed constant $V$.

Let $D_i$ be the smooth divisors in $M$ as above and $(M,\omega_j, D)$, where $D$ denotes the divisor $\Sigma_{i=1}^k(1-\beta^i_j)D_i$, be
a sequence of conic K\"{a}hler-Einstein metrics satisfying:
$${\rm Ric}(\omega_j)\,=\,t_j\,\omega_j\,+\,2\pi\,\Sigma_{i=1}^k\,(1-\beta^i_j)\,D_i, ~~~~\omega_j\,\in\, 2\pi c_1(L),$$
where both $t_j$ and $\beta^i_j$ are in $[\mu,1]$ for some $\mu > 0$.
Assume that $(M,\omega_j,D)$ converges to a metric space $(X,d)$ in the Gromov-Hausdorff topology. From Theorem \ref{ricci}, we know that $$(X,d)\,\in\,\overline{\mathcal{M}}(K,\frac{\pi}{\sqrt{\mu}}).$$

We want to show that the limit is a normal variety. The key is to establish the partial $C^0$ estimate for $\omega_j$. Let us make it more precise.
For any conic K\"ahler metric $\omega$, choose a hermitian metric $h$ on $L$ such that $\text{R }(h)=\omega$.
Then for a large integer $\ell$ such that $L^{\ell}$ is a line bundle, we have the $L^2$ metric on $H^0(M,L^{\ell})$ :
$$\langle s_1,s_2\rangle \,=\,\int_M\,\langle s_1,s_2\rangle_{h^{\otimes \ell}}\,\omega^n,~~~~s_i\,\in\, H^0(M,L^{\ell} ).$$
\begin{defi}
For any orthonormal basis $\{s_i\}_{0\leq i\leq N}$ of $H^0(M,L^{\ell})$, we define a Bergman kernel as
\begin{align}
\rho_\ell(M,\omega)(x)\,=\,\Sigma_{i=0}^N \,|s_i|^2_{h^{\otimes \ell}}(x).
\end{align}
\end{defi}
The partial $C^0$ estimate we want to establish for $(M,\omega_j)$ is stated as follows: There exists some large integer $\ell$ and some positive constant $c>0$, which may depend on $l$, such that
$$\rho_\ell(M,\omega_j)\,\geq\, c.$$
We will follow the strategy in \cite{Ti15} to prove this.

At first, the Cheeger-Colding theory (\cite{CC1}) assures the existence of tangent cones $C_x=C_x(X,d)$ as a metric cone, which may not be unique, at any $x\in X$. Then we have a decomposition
$$X\,=\, \mathcal{R}\cup  \mathcal{S},$$
where $\mathcal{R}$ consists of all $x\in X$ which has a tangent cone $C_x$ isometric to the Euclidean space $\RR^{2n}$ and $\mathcal{S}$ is the complement, referred as the singular set. Also every tangent cone has a similar decomposition. Furthermore, it follows from \cite{CC1} and \cite{CCT}
\begin{lem}
There is a stratification of the singular set $\mathcal{S}$ of $X$:
$$\mathcal{S}\,=\,\bigcup_{k\geq 1}\mathcal{S}_{2n-2k},$$
where $\mathcal{S}_{2n-2k}$ consists of all $x$ for which at least one tangent cone splits an Euclidean factor $\CC^{n-k}$ and no tangent cones split Euclidean factor
of dimension greater than $2n-2k$. We further have
$$\dim \mathcal{S}_{2n-2k}\,\leq\, 2n-2k.$$
Also every tangent cone $C_x$ of $X$ has the stratification of the same property.
\end{lem}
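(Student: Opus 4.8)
The plan is to reduce the statement to the standard stratification theory of Cheeger-Colding together with its Kähler refinement by Cheeger-Colding-Tian, the one extra ingredient being the smoothing provided by Theorem \ref{ricci}. First I would use that approximation to realize $(X,d)$ as a Gromov-Hausdorff limit of \emph{smooth} Kähler metrics with a uniform Ricci lower bound. Concretely, each conic Kähler-Einstein metric $\omega_j$ satisfies $\mathrm{Ric}(\omega_j)=t_j\omega_j+2\pi\sum_i(1-\beta^i_j)D_i$ with $t_j,\beta^i_j\in[\mu,1]$, so by Theorem \ref{ricci} it admits a $K$-approximation by smooth metrics $\omega_{j,l}=\omega_j+\sqrt{-1}\,\partial\bar\partial\phi_{j,l}$ with $\mathrm{Ric}(\omega_{j,l})\ge K\omega_{j,l}$ and $(M,\omega_{j,l})\to(M,\omega_j)$ in the Gromov-Hausdorff topology. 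A diagonal choice $l=l(j)$ then produces a single sequence of smooth metrics, all Kähler for the fixed complex structure of $M$, with a uniform Ricci lower bound and (by Bonnet-Myers) uniformly bounded diameter, converging to the same limit $(X,d)$. This places $(X,d)$ squarely within the scope of the Cheeger-Colding and Cheeger-Colding-Tian theory.

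Given this, I would invoke the general Cheeger-Colding stratification of \cite{CC1}: writing $\widetilde{\mathcal{S}}_j$ for the set of points at which no tangent cone splits off a Euclidean factor $\mathbb{R}^{j+1}$, one has the filtration $\widetilde{\mathcal{S}}_0\subseteq\widetilde{\mathcal{S}}_1\subseteq\cdots$ together with the Hausdorff-dimension estimate $\dim\widetilde{\mathcal{S}}_j\le j$. The Kähler refinement of \cite{CCT} is what upgrades this to even codimension: because the metrics $\omega_{j,l(j)}$ are Kähler, the limit carries a limiting parallel complex structure $J$ on its regular part, and every tangent cone $C_x$ inherits a compatible $J$; hence any Euclidean factor splitting off $C_x$ must be $J$-invariant and therefore of even real dimension. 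Consequently the maximal splitting at any $x$ is of the form $\mathbb{R}^{2(n-k)}=\mathbb{C}^{n-k}$, so the only nontrivial strata occur in even codimension $2k$, which is exactly the assertion $\mathcal{S}=\bigcup_{k\ge1}\mathcal{S}_{2n-2k}$ with $\mathcal{S}_{2n-2k}$ the locally closed stratum of points whose maximal splitting is precisely $\mathbb{R}^{2n-2k}$. Since $\mathcal{S}_{2n-2k}\subseteq\widetilde{\mathcal{S}}_{2n-2k}$, the bound $\dim\mathcal{S}_{2n-2k}\le 2n-2k$ follows from the Cheeger-Colding estimate. The final clause, that every tangent cone carries a stratification with the same property, follows because a tangent cone is itself a pointed Gromov-Hausdorff limit of rescalings of the $\omega_{j,l(j)}$ and is a metric cone, so the identical argument applies to it.

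The step I expect to be the main obstacle is the even-dimensionality of the splittings, i.e. the genuine use of \cite{CCT}. The delicate point in our setting, flagged in the introduction, is that the cone angles $\beta^i_j$ need not approach $1$, so one cannot pretend the limit is a limit of smooth Kähler-Einstein metrics whose curvature is controlled in the classical sense; it is only the smoothings of Theorem \ref{ricci}, with $\mathrm{Ric}\ge K\omega_{j,l}$ and $K$ possibly strictly less than the Einstein constant, that are available. I would therefore need to verify carefully that these smoothings still provide enough structure, namely a uniform volume lower bound off the divisors, non-collapsing, and an almost-Kähler structure passing to the limit, for the $\epsilon$-regularity and almost-splitting arguments of \cite{CCT} to force the limiting $J$ to be parallel and hence to constrain the splitting dimension to be even. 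Once the limiting complex structure is in hand, the reindexing and the dimension bounds are formal.
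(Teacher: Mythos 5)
Your proposal matches the paper's approach: the paper offers no proof of this lemma beyond the observation that Theorem \ref{ricci} places $(X,d)$ in $\overline{\mathcal{M}}(K,\pi/\sqrt{\mu})$, i.e.\ realizes it as a non-collapsed Gromov--Hausdorff limit of smooth K\"ahler metrics with a uniform Ricci lower bound, after which the stratification, the dimension bound, and the even-codimension refinement are quoted directly from \cite{CC1} and \cite{CCT}. Your reconstruction (diagonal smoothing via the $K$-approximation, Cheeger--Colding filtration $\widetilde{\mathcal{S}}_j$ with $\dim\widetilde{\mathcal{S}}_j\le j$, and the K\"ahler splitting argument of \cite{CCT} forcing Euclidean factors to be $J$-invariant and hence even-dimensional) is exactly the argument the citation is standing in for.
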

It follows from volume convergence in \cite{Co} and Anderson's result in \cite{An}.
\begin{lem}\label{angle}
If there exists $\bar\beta<1$ such that $\beta^i_j\leq \bar\beta$, then the regular part $\mathcal{R}$ is open, and the distance is induced by a smooth metric $\omega_\infty$ on $\mathcal{R}$. Moreover, for any compact set $K\subset\mathcal{R}$, there are $K_j\subset (M,\omega_j)$, such that $(K_j,\omega_j)$ converges to $(K,\omega_\infty)$ in the smooth topology.
\end{lem}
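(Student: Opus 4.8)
The plan is to turn the cone-angle bound $\beta^i_j\le\bar\beta<1$ into a uniform volume gap and then apply Anderson's $\epsilon$-regularity on the region of $M$ away from the divisors, where the metrics are genuinely Einstein with a two-sided Ricci bound. First I would record the relevant volume densities. At a smooth point of $D_i$ the metric $\omega_j$ is asymptotic to the model cone $\CC_{\beta^i_j}\times\CC^{n-1}$, whose volume density relative to $\RR^{2n}$ equals $\beta^i_j\le\bar\beta$; at a crossing $D_i\cap D_{i'}$ the model is $\CC_{\beta^i_j}\times\CC_{\beta^{i'}_j}\times\CC^{n-2}$ with density $\beta^i_j\beta^{i'}_j\le\bar\beta$, and deeper strata have even smaller density. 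Hence every $y\in|D|$ has volume density $\Theta(y)\le\bar\beta$. Since each $(M,\omega_j)$ is a non-collapsed limit of smooth metrics with $\mathrm{Ric}\ge K\omega$ (Theorem \ref{ricci}), Bishop--Gromov monotonicity holds on it, so for $y\in|D|$ and all small $r$ one gets $\mathrm{Vol}_{\omega_j}(B(y,r))\le(\bar\beta+o(1))\,b_{2n}r^{2n}$, where $b_{2n}$ is the Euclidean unit-ball volume and the $o(1)$ is uniform in $j$.

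Next, fix $x\in\mathcal R$ and choose Anderson's constant $\epsilon=\epsilon(n)$, shrinking it so that $\epsilon<1-\bar\beta$. As the tangent cone at $x$ is $\RR^{2n}$, there is a small $r_0$ with $\mathcal H^{2n}(B(x,r_0))\ge(1-\tfrac\epsilon2)b_{2n}r_0^{2n}$, and by the volume convergence of \cite{Co} I can pick $x_j\to x$ with $\mathrm{Vol}_{\omega_j}(B(x_j,r_0))\ge(1-\epsilon)b_{2n}r_0^{2n}$. I then claim $d_{\omega_j}(x_j,|D|)\ge c_0r_0$ for a fixed $c_0=c_0(n,\bar\beta)>0$: if some $y_j\in|D|$ were within $\delta_jr_0$ of $x_j$ with $\delta_j\to0$, then $B(x_j,(1-\delta_j)r_0)\subset B(y_j,r_0)$ gives the upper bound $(\bar\beta+o(1))b_{2n}r_0^{2n}$, while Bishop--Gromov at $x_j$ forces the same volume to be $\ge(1-\delta_j)^{2n}(1-\epsilon)b_{2n}r_0^{2n}(1+o(1))$; letting $\delta_j\to0$ and $r_0\to0$ contradicts $\epsilon<1-\bar\beta$. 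Therefore $B(x_j,\tfrac12c_0r_0)$ is disjoint from $|D|$, so there $\omega_j$ is smooth with $\mathrm{Ric}(\omega_j)=t_j\omega_j$ and $\mu\,\omega_j\le\mathrm{Ric}(\omega_j)\le\omega_j$.

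On such an almost-Euclidean, two-sidedly Ricci-bounded ball, Anderson's theorem \cite{An} supplies a uniform lower bound on the harmonic radius and hence uniform $C^{1,\alpha}$ control of $\omega_j$ in harmonic coordinates; substituting this into the Einstein equation and bootstrapping by Schauder estimates yields uniform $C^{k,\alpha}$ bounds for every $k$. A subsequence converges in $C^\infty_{\mathrm{loc}}$ to a smooth Kähler--Einstein metric $\omega_\infty$ on a neighborhood of $x$ in $X$ whose induced length metric agrees with $d$; in particular every point of that neighborhood has Euclidean tangent cone, which shows $\mathcal R$ is open and carries the smooth metric $\omega_\infty$. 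For the last assertion, given a compact $K\subset\mathcal R$ I would cover it by finitely many such balls, take $K_j\subset M$ to be the union of the corresponding harmonic charts (the diffeomorphic preimages), and patch the local convergences to get $(K_j,\omega_j)\to(K,\omega_\infty)$ smoothly.

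I expect the main obstacle to be the first step, namely establishing the volume gap uniformly in $j$: one must confirm that the density at points of $|D|$ is governed by the model-cone density $\le\bar\beta$ even though $\omega_j$ is only quasi-isometric to the model, treat the lower-dimensional crossing strata, and justify Bishop--Gromov on the singular spaces $(M,\omega_j)$ — this is exactly where Theorem \ref{ricci} is needed, realizing each $\omega_j$ as a non-collapsed limit with a Ricci lower bound. Once the gap and the resulting separation of $\mathcal R$ from $|D|$ are in hand, the appeal to Anderson's regularity and the bootstrap to smooth convergence are comparatively standard.
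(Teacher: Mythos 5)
Your proposal is correct and follows exactly the route the paper intends: the paper offers no written argument for this lemma beyond the attribution ``It follows from volume convergence in [Co] and Anderson's result in [An]'', and your chain --- volume density $\le\bar\beta$ at points of $|D|$, a Bishop--Gromov gap argument separating almost-Euclidean balls from the divisor, then Anderson's harmonic-radius estimate and elliptic bootstrap on the genuinely Einstein region, finished by volume convergence to transfer the almost-Euclidean hypothesis from $X$ to $(M,\omega_j)$ --- is precisely the standard way to realize that attribution. The one point requiring care, which you correctly flag yourself, is that the density bound at $y\in|D|$ needs the tangent cone there to be the exact model cone (density $\beta^i_j$, or a product of angles at crossings) rather than merely quasi-isometric to it; this is a known consequence of the asymptotic regularity of conic K\"ahler--Einstein metrics and is implicitly assumed by the paper as well.
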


For tangent cones, we have
\begin{lem}\label{beta1}
Assume that all $\beta^i_j\leq \bar\beta<1$. Let $C_x$ be a tangent cone of $(X,d)$ at $x$:
$$(C_x,\omega_x)\,=\,\lim_{a\rightarrow \infty}(X,r_a^{-1}\,d,x).$$
Then the regular part $\mathcal{R}(C_x)$ is open, and the distance is induced by a smooth metric $\omega_x$ on $\mathcal{R}(C_x)$.
Moreover for any compact set $K\subset\mathcal{R}(C_x)$, there are $K_j\subset (\mathcal{R},\omega_\infty)$, such that $(K_a,r^{-2}_a\,\omega_\infty)$
converges to $(K,\omega_x)$ in the smooth topology.
\end{lem}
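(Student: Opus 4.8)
The plan is to reduce Lemma \ref{beta1} to the same non-collapsed Ricci-limit regularity theory used for Lemma \ref{angle}, but applied one level deeper, to the rescalings that define the tangent cone. Since $(X,d)=\lim_j(M,\omega_j)$ and $C_x=\lim_{a\to\infty}(X,r_a^{-1}d,x)$ with $r_a\to 0$, I would first use a diagonal argument to realize $C_x$ as a pointed Gromov--Hausdorff limit of rescaled metrics. By Lemma \ref{angle}, on the regular part $\mathcal{R}\subset X$ the distance is induced by a smooth K\"ahler metric $\omega_\infty$, and away from the divisors the conic equation reads ${\rm Ric}(\omega_j)=t_j\,\omega_j$ with $t_j\geq\mu$, so $\omega_\infty$ inherits ${\rm Ric}(\omega_\infty)\geq\mu\,\omega_\infty$ on $\mathcal{R}$. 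Because $\mathcal{R}$ has full $\mathcal{H}^n$-measure and is dense, one has $C_x=\lim_{a\to\infty}(\mathcal{R},r_a^{-2}\omega_\infty,x)$ in the pointed Gromov--Hausdorff sense. Writing $\tilde\omega_a=r_a^{-2}\omega_\infty$, these are smooth K\"ahler metrics on $\mathcal{R}$ that are volume non-collapsed and satisfy ${\rm Ric}(\tilde\omega_a)\geq \mu r_a^2\,\tilde\omega_a$, with $\mu r_a^2\to 0$; thus the lower Ricci bound improves to $0$ in the limit, which is consistent with $C_x$ being a metric cone.

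The decisive point is that the conic locus sits in the singular set, and this is exactly where the hypothesis $\beta^i_j\leq\bar\beta<1$ enters. Transverse to each $D_i$, the metric $\omega_j$ is modeled on $\CC^{\,n-1}$ times a two-dimensional cone of total angle $2\pi\beta^i_j\leq 2\pi\bar\beta$, whose volume density equals $\beta^i_j\leq\bar\beta<1$, strictly below the Euclidean value $1$. Hence any point of $X$, or of a tangent cone, that arises as a limit of points on $\bigcup_i D_i$ has volume density at most $\bar\beta<1$ and therefore lies in $\mathcal{S}$ (respectively in the singular set of the cone). Consequently every $y\in\mathcal{R}(C_x)$ is a limit of genuinely smooth, divisor-free regions, so no conic singularity can approach $y$ and the estimates used below are available along the sequence $\tilde\omega_a$ near $y$.

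With this in hand, I would run the $\epsilon$-regularity argument. At $y\in\mathcal{R}(C_x)$ the tangent cone is $\RR^{2n}$, so by the volume continuity of \cite{Co} the volume ratios of small balls about the approximating points are almost Euclidean; combined with the (improving) lower Ricci bound, Anderson's theorem \cite{An} together with \cite{CC1} yields a uniform lower bound on the harmonic radius of $\tilde\omega_a$ near $y$, and elliptic estimates upgrade this to uniform $C^{k,\alpha}$ bounds. A subsequence of $(\mathcal{R},\tilde\omega_a)$ then converges in $C^\infty_{loc}$ to a smooth metric $\omega_x$ on a neighborhood of $y$; K\"ahlerness passes to the smooth limit, and stability of the almost-Euclidean volume condition shows that $\mathcal{R}(C_x)$ is open. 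As in the proof of Proposition \ref{ghc} and Lemma \ref{angle}, the smooth convergence forces the cone distance on $\mathcal{R}(C_x)$ to be induced by $\omega_x$. Finally, taking $K_a\subset(\mathcal{R},\omega_\infty)$ to be the rescaled preimages of a compact $K\subset\mathcal{R}(C_x)$ gives the asserted smooth convergence $(K_a,r_a^{-2}\omega_\infty)\to(K,\omega_x)$.

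The main obstacle I expect is not the local regularity itself, which is parallel to the smooth Cheeger--Colding--Anderson theory once the divisors are known to lie in the singular set, but rather the bookkeeping of the two nested limits: one must verify that the non-collapsing and the lower Ricci bound survive the diagonalization over $j$ and $a$, and that replacing $X$ by the full-measure smooth locus $(\mathcal{R},\omega_\infty)$ does not change the tangent cone. The essential simplification here, in contrast to the general situation flagged in the introduction where some angles tend to $2\pi$, is precisely the uniform bound $\beta^i_j\leq\bar\beta<1$, which guarantees the strict volume-density drop along $\bigcup_i D_i$ and hence keeps the conic locus robustly inside $\mathcal{S}$.
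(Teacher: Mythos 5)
Your proposal is correct and follows essentially the same route the paper intends: Lemma \ref{beta1} is stated there without a written proof, as a consequence of Colding's volume convergence and Anderson's $\epsilon$-regularity, with the hypothesis $\beta^i_j\le\bar\beta<1$ used exactly as you use it --- the volume-density drop along $\bigcup_i D_i$ keeps the conic locus inside the singular set, so regular points of the tangent cone are approached only through divisor-free Einstein regions. Your writeup just makes explicit the diagonalization and harmonic-radius bookkeeping that the paper leaves implicit.
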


In general, there exists  $l\in [1,k]$ so that $\beta_j^i(1\leq i\leq l)$ approaches $1$ and $\beta_j^i(l+1\leq i\leq k)$ is not greater than $\bar \beta$. From the volume comparison, there exists $\delta=\delta(n,\bar\beta)$ such that for the metric $\omega_j$, if we have
$$\text{vol }(B_r(p_j,\omega_j))\,\geq \,(1-\delta)\,V_{2n}\,r^{2n},$$
where $B_r(p,\omega)$ denotes the metric ball of $(M,\omega)$ with radius $r$ and center $p$ and $V_{2n}$ denotes the volume of the unit Euclidean ball,
then
$$B_r(p_j,\omega_j) \cap \bigcup_{i=l+1}^k D_i\,=\,\emptyset.$$
We denote by $B_r(p_\infty,d)$ the limit of $B_r(p_j,\omega_j)$ in the Gromov-Hausdorff topology.
We need the following regularity theorem for weak limits of conic K\"ahler-Einstein metrics (possibly singular).
\begin{prop}\label{lr}
There exists $\delta=\delta(n,\bar\beta), \rho=\rho(n)$ such that if
$${\rm vol}(B_r(p_\infty,d))\,\geq\, (1-\delta)\,V_{2n}r^{2n},$$
then $B_{\rho r}(p_\infty,d)$ is biholomorphic to an open subset in $\CC^n$ and the metric $d$ on $B_{\rho r}(p_\infty,d)$ is induced by a smooth K\"ahler-Einstein metric $\omega_\infty$, moreover, the Gromov-Hausdorff limits $D_i^\infty $ of $D_i$ ($1\le i\le l$) are divisors in $B_{\rho r}(p_\infty,d)$.
\end{prop}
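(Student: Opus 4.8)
The plan is to follow the strategy of \cite{Ti15}: first reduce to the case where every cone angle present is close to $2\pi$, and then invoke an $\epsilon$-regularity theorem of Cheeger--Colding--Anderson type, finally tracking the divisors through the limit.

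First I would \emph{localize}. By Colding's volume convergence \cite{Co}, the hypothesis $\mathrm{vol}(B_r(p_\infty,d))\ge (1-\delta)V_{2n}r^{2n}$ on the limit transfers, for $j$ large, to $\mathrm{vol}(B_r(p_j,\omega_j))\ge (1-\delta')V_{2n}r^{2n}$ with $\delta'\to\delta$. Choosing $\delta=\delta(n,\bar\beta)$ small enough and applying the volume comparison stated just before the proposition, one gets $B_r(p_j,\omega_j)\cap\bigcup_{i=l+1}^k D_i=\emptyset$ for large $j$. Thus on this ball the only conic divisors present are the $D_i$ with $1\le i\le l$, whose angles $2\pi\beta^i_j\to 2\pi$. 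This is precisely the configuration treated in \cite{Ti15}, and it is here that the difficulty ``not all angles approach $2\pi$'' is disposed of: the troublesome divisors with $\beta^i_j\le\bar\beta$ simply do not meet the almost-Euclidean ball.

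Next I would establish the $\epsilon$-regularity. On $B_r(p_j,\omega_j)$ the metric is genuinely Einstein, $\mathrm{Ric}(\omega_j)=t_j\omega_j$ with $t_j\in[\mu,1]$, away from $\bigcup_{i\le l}D_i$, while its singular Ricci part $2\pi\sum_{i\le l}(1-\beta^i_j)[D_i]$ carries total mass $O\!\left(\sum_{i\le l}(1-\beta^i_j)\right)\to 0$. I would combine the Cheeger--Colding almost-rigidity theorem, so that almost-maximal volume forces $B_r(p_j,\omega_j)$ to be Gromov--Hausdorff close to a Euclidean ball $B_r(0)\subset\RR^{2n}$, with Anderson's $\epsilon$-regularity \cite{An} for (almost) Einstein metrics, which upgrades this closeness to a definite lower bound on the harmonic radius and hence to uniform $C^{1,\alpha}$ estimates on a smaller ball $B_{\rho r}$, $\rho=\rho(n)$. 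Since the metrics are K\"ahler--Einstein, elliptic bootstrapping promotes these to uniform $C^\infty$ estimates. Passing to the limit, $B_{\rho r}(p_\infty,d)$ is then a smooth manifold whose distance is induced by a smooth K\"ahler--Einstein metric $\omega_\infty$, and, the complex structures converging as well, it is biholomorphic to an open subset of $\CC^n$.

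Finally I would identify the limiting divisors. On $B_{\rho r}$ the convergence is now smooth, both as Riemannian and as complex manifolds (the whole-ball analogue of Lemma \ref{angle} and Lemma \ref{beta1}), so I would let $D_i$ ($1\le i\le l$) be cut out by the defining sections $S_i$ of $[D_i]$ with Hermitian norms controlled as in Section 3. Under the convergence these sections converge to holomorphic sections on the limit domain, and their zero loci converge to $D_i^\infty$; a Bishop-type mass bound coming from the fixed cohomology classes $c_1([D_i])$ guarantees that no mass escapes and that each $D_i^\infty$ is a genuine analytic hypersurface rather than a lower-dimensional set, so the $D_i^\infty$ are divisors in $B_{\rho r}(p_\infty,d)$. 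The main obstacle is the second step: the uniform $\epsilon$-regularity for conic K\"ahler--Einstein metrics whose angles only approach $2\pi$. Anderson's smooth theory does not apply verbatim across a genuine conic locus, so one must show that almost-Euclidean volume forces the conic divisor to degenerate to a smooth hypersurface in the limit with curvature staying bounded uniformly up to $D_i$; the smallness of the angle defect, together with the cohomological mass bound, is exactly what prevents $D_i^\infty$ from being absorbed into the (here empty) singular set.
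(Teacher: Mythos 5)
Your first step (using volume convergence and the volume comparison to expel the divisors $D_i$ with $l+1\le i\le k$ from the almost-Euclidean ball) is correct and matches the paper's setup. But your second step contains the essential gap, and it is exactly the one the paper flags just before stating the proposition: since the divisors $D_i$ with $1\le i\le l$ \emph{do} meet $B_r(p_j,\omega_j)$, the metrics $\omega_j$ are genuinely conic there, the convergence is a priori only Gromov--Hausdorff, and Anderson's $\epsilon$-regularity (which needs smooth Einstein metrics and harmonic coordinates) does not apply across the conic locus. You acknowledge this obstacle in your last paragraph, but ``smallness of the angle defect together with the cohomological mass bound'' is an aspiration, not an argument; nothing in your proposal actually produces uniform curvature or $C^{1,\alpha}$ bounds up to $\bigcup_{i\le l}D_i$, and indeed no such bounds are ever established in the paper.

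The paper's route is entirely different and is designed precisely to avoid this. It runs an induction on $\bar A=\sum_{i=1}^l \mathrm{vol}(D_i\cap \tilde B_1(p_j))$, using Lemma \ref{induc} to get a definite lower bound $\kappa$ on the divisor volume density. At each stage the set $W^\infty$ where the density exceeds $\bar A/2$ has Minkowski measure at most $2^{2n}$; the induction hypothesis gives smoothness of $\omega_\infty$ off $W^\infty$; Proposition \ref{map} then uses the partial $C^0$-estimate machinery (cut-off functions with small energy, the holonomy lemmas, H\"ormander's $L^2$ estimate) to build a holomorphic chart $F_j$ with uniformly bounded K\"ahler potential on a smaller ball, \emph{including} across $W^\infty$ and the divisors. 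Finally Lemma \ref{chart} writes $(F_j)_*\omega_j$ as a solution of a complex Monge--Amp\`ere equation whose conic density $\prod_{i\le l}|f_i|^{2(\beta^i_j-1)}$ tends to $1$ because $\beta^i_j\to 1$, and concludes smoothness of the limit from pluripotential regularity theory rather than from any Riemannian $\epsilon$-regularity. This is also where the limits $D_i^\infty$ are identified as divisors, via the holomorphic chart, rather than through convergence of defining sections as in your third step. Without this (or an equivalent substitute, e.g.\ the conic Ricci flow smoothing the paper mentions), your proof does not close.
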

Since we only know a prior that $\omega_j$ converges to $d$ in the weak topology, i.e., the Gromov-Hausdorff topology, we can not apply the Cheeger-Colding theorey directly to prove the smoothness of $B_{\rho r}(p_\infty,d)$ and $d$. There are two approaches to establishing such a regularity: one is to use the Ricci flow method as in \cite{TW16} (also see Appendix B in \cite{Ti15}), the other one works only in the complex case and is to use the technique of partial $C^0$-estimate. This second approach was first used in \cite{CDS} and will be used to prove Proposition \ref{lr}. The first approach can be also applied after we
extend certain results on the ordinary case of Ricci flow to the conic case.

Denote the scaled metrics
$$\tilde\omega_j\,=\,r^{-2}\omega_j~~~~{\rm and}~~~~\tilde\omega_\infty\,=\,r^{-2}\omega_\infty.$$
We will denote by $\tilde B_\rho(p_j)$ and $\tilde B_\rho(p_\infty)$ the metric balls of radius $\rho$ in $(M,\omega_j)$ and $(X,\tilde d)$, where $\tilde d = r^{-1} d$.
Let $L_{Euc}$ be the trivial bundle over over the unit ball $B^{2n}_1(0)\subset \RR^{2n}$ with curvature $\omega_{Euc}$ being the K\"ahler form of the Euclidean metric.
Let $m(W)$ be the Minkowski measure of a subset $W$ with respect to a metric $\tilde d$, that is,
$$m(W)\,=\,\inf\{\,A\,|\,\forall r>0,\, \exists \,A r^{2-2n}~{\rm metric~ balls~ of~ radius~} r ~{\rm covering} \,W\,\}.
$$

\begin{prop}\label{map}
For any $A, \bar\beta>0$, there are $\rho=\rho(A), \delta=\delta(A,\bar\beta) >0$ with the following property.
First Assume
\begin{equation}\label{prop-map-0}
vol(\tilde B_1(p_\infty))\,\geq\, (1-\delta)V_{2n}.
\end{equation}
Next we assume that there is subset $W^\infty\subset \tilde B_1(p_\infty)$ with $m(W^\infty)\leq A$,
such that $(\tilde B_1(p_\infty)\setminus W^\infty,\tilde \omega_\infty) $ is a smooth K\"ahler manifold and $D_i^\infty (1\leq i\leq l)$ are divisors in
$\tilde B_1(p_\infty)\setminus W^\infty$.
Then for $j$ sufficiently large, we have
\vskip 0.1in
\noindent
1. There is a local K\"ahler potential $\varphi_j$ for $\tilde \omega_j$ on the ball $\tilde B_\rho(p_j)$ such that
\begin{equation}\label{prop-map-1}
|\varphi_j|\,\leq\, C(n)\,\rho^2.
\end{equation}

\vskip 0.1in
\noindent
2. There is a holomorphic map $F_j: \tilde B_\rho(p_j)\,\mapsto \,\mathbb{C}^n$ which is a homeomorphism to its image and satisfies:
\begin{eqnarray}\label{prop-map-2}
&|\nabla F_j|_{\tilde \omega_j}\,\leq\,  C(n)\\
&\label{prop-map-3} B_{\frac{3\rho }{4}}^{2n}(0)\,\subset\,F_j(\tilde B_\rho(p_j)) \,\subset\,B_{\frac{5\rho}{4}}^{2n}(0).
\end{eqnarray}
\end{prop}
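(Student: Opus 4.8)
The plan is to follow the Cheeger--Colding--Tian scheme for producing almost-holomorphic coordinates on an almost-Euclidean ball and then correcting them to genuine holomorphic functions via H\"ormander's $L^2$-estimate. The almost-maximal volume hypothesis \eqref{prop-map-0} enters through the volume-cone-implies-metric-cone and almost-rigidity theorems of Cheeger--Colding: once $\delta=\delta(A,\bar\beta)$ is small, $(\tilde B_1(p_\infty),\tilde d)$ is Gromov--Hausdorff close to the Euclidean unit ball, and the smooth convergence $\tilde\omega_j\to\tilde\omega_\infty$ off the singular set is supplied by Lemmas \ref{angle} and \ref{beta1}. Note that after scaling the Ricci lower bound of Theorem \ref{ricci} becomes $\mathrm{Ric}(\tilde\omega_j)\ge -C r^2\,\tilde\omega_j$ on the $\mathbb{C}^n$-factor, hence is almost nonnegative at unit scale, so all of the Cheeger--Colding machinery applies to $\tilde\omega_j$ uniformly in $j$.

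First I would establish the potential bound \eqref{prop-map-1}. Since $\tilde B_\rho(p_j)$ is contractible, the conic metric $\tilde\omega_j$ admits a continuous local potential $\varphi_j$, which I normalize by $\varphi_j(p_j)=0$; it solves the Poisson equation $\Delta_{\tilde\omega_j}\varphi_j=n$. Combining this with the almost-Euclidean geometry and comparing $\varphi_j$ with the model Euclidean potential $\tfrac14|z|^2$ (using the mean-value/Moser inequality of Lemma \ref{moser} applied to $\varphi_j$ and to $-\varphi_j$) yields $|\varphi_j|\le C(n)\rho^2$ after shrinking $\rho=\rho(A)$. This bounded strictly plurisubharmonic potential is exactly the weight I will need in Step three.

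Next I would construct $F_j$. Using Cheeger--Colding's harmonic approximation of the Euclidean coordinate functions, I produce $\tilde\omega_j$-harmonic functions $b_1,\dots,b_{2n}$ on $\tilde B_\rho(p_j)$ with bounded gradient and $L^2$-small Hessian. Complexifying, $v_\alpha=b_{2\alpha-1}+\sqrt{-1}\,b_{2\alpha}$ satisfies $\|\bar\partial v_\alpha\|_{L^2}\le\eta(\delta)$ with $\eta(\delta)\to 0$, since the $L^2$-smallness of the Hessians forces $db_\alpha$ to be almost $J$-linear. I then solve $\bar\partial u_\alpha=\bar\partial v_\alpha$ with $\|u_\alpha\|_{L^2}\le\eta(\delta)$ by H\"ormander's $L^2$-estimate, the local counterpart of Lemma \ref{partial-bar}, using a weight $\psi$ built from $\varphi_j$ so that $\sqrt{-1}\partial\bar\partial\psi\ge c\,\tilde\omega_j$. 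Setting $w_\alpha=v_\alpha-u_\alpha$ and $F_j=(w_1,\dots,w_n)$ gives a holomorphic map that is $L^2$-close, hence (by the interior gradient estimate, i.e.\ Lemma \ref{moser} applied to $|\nabla w_\alpha|^2$ exactly as in Lemma \ref{gradient}) uniformly $C^1$-close, to the harmonic coordinate map. This gives \eqref{prop-map-2} immediately; and since $F_j$ is holomorphic and $C^1$-close to a linear isomorphism, the inverse function theorem together with a degree/properness argument shows $F_j$ is a homeomorphism onto its image and yields the inclusions \eqref{prop-map-3}.

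The main obstacle is that $\tilde B_1(p_\infty)$ is smooth only away from the singular set $W^\infty$ and still contains the genuine (mildly conic) divisors $D_i^\infty$ for $1\le i\le l$, so \emph{every} integration by parts above---in the harmonic approximation, in the H\"ormander estimate, and in the Moser iteration---must be justified across a possibly singular locus. Here the hypothesis $m(W^\infty)\le A$ is decisive: the small (codimension-two) Minkowski content lets me build logarithmic cutoff functions $\gamma_\epsilon$ supported in the smooth region with $\int|\nabla\gamma_\epsilon|^2\to 0$, precisely as the $\gamma_\epsilon$ of Lemmas \ref{moser} and \ref{partial-bar}, so that all error terms concentrated near $W^\infty$ and near the $D_i^\infty$ disappear as $\epsilon\to 0$. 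The two points demanding the most care are controlling the rate $\eta(\delta)\to 0$ uniformly in $j$, so that $\rho=\rho(A)$ and $\delta=\delta(A,\bar\beta)$ can be fixed \emph{before} passing to the limit, and upgrading the local biholomorphicity of $F_j$ to global injectivity.
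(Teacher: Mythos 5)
Your overall strategy (harmonic almost-splitting functions, complexified and then corrected by a local H\"ormander estimate for functions) is genuinely different from the paper's, which never touches harmonic coordinates: there one transplants the Euclidean sections $\gamma_{\bar\epsilon}\,e$ and $\gamma_{\bar\epsilon}\,z^\alpha e$ of the trivial bundle $L_{Euc}$ through an almost-isometric bundle map $\psi: L_\infty\to L_{Euc}$ --- whose existence is precisely the content of the holonomy Lemmas \ref{circle}--\ref{holo} --- and then corrects them to global holomorphic sections $s_j, s_j^\alpha$ of $L^\ell$ using the global estimate of Lemma \ref{partial-bar}; the map is $F_j=(s_j^1/s_j,\dots,s_j^n/s_j)$ with $\rho=1/\ell$, and the bounded K\"ahler potential is read off from $-\frac{1}{\ell}\log|s_j|^2_{h^{\ell}}$ on the set where $|s_j|$ is pinched near $1$. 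Your route bypasses the holonomy estimates entirely, but it has genuine gaps.

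First, your Step one does not yield \eqref{prop-map-1}. A function normalized by $\varphi_j(p_j)=0$ and solving $\Delta_{\tilde\omega_j}\varphi_j=n$ need not satisfy $\sqrt{-1}\,\partial\bar\partial\varphi_j=\tilde\omega_j$ (potentials are determined only modulo pluriharmonics, and the trace condition is far weaker than the full equation), Moser iteration requires an a priori $L^1$ bound on $\varphi_j$ that you do not have, and the comparison with $\tfrac14|z|^2$ presupposes the very coordinates you are trying to construct. In the paper the bounded potential is a \emph{consequence} of the section construction, not an input to it --- and you need it as an input for your weight. Second, the claim that $L^2$-smallness of the Hessians ``forces $db_\alpha$ to be almost $J$-linear'' is false as stated: a real-linear function on $\mathbb{C}^n$ has vanishing Hessian and is not almost holomorphic. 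One must first show that the complex structure is almost parallel on the good region and then choose the correct complex-linear combinations of the $2n$ splitting functions; this is a separate, nontrivial step. Third, your H\"ormander estimate is invoked on a metric ball that is not known to be pseudoconvex and that contains both $W^\infty$ and the conic divisors $D_i$, $1\le i\le l$; the paper avoids this entirely by working globally on $M$ with the positivity of $L^\ell$ (Lemma \ref{partial-bar}). Finally, the injectivity of $F_j$ is only flagged, not proved: your $C^1$-closeness to a linear map holds only away from the singular set, so the inverse-function-theorem/degree argument does not by itself give a homeomorphism on all of $\tilde B_\rho(p_j)$.
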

By Anderson's result, we know that $\tilde \omega_j$ converges to $\tilde\omega_\infty$ in the smooth topology on any compact subset
outside $W^\infty$ and $D_i^\infty$ for $i=1,\cdots,l$, moreover, for any $\epsilon_1,\eta>0$, there exists $\delta_0=\delta_0(\epsilon_1, \eta)$ such that if \eqref{prop-map-0} holds for some $\delta\leq \delta_0$, there is injection
$$\phi :\tilde B_{\frac{3}{4}}(p_\infty)\setminus T_{\epsilon_1}(W^\infty) \,\rightarrow\, B_1^{2n}(0)$$
such that
\begin{equation}\label{eq:t1}
|\phi^*\omega_{Euc}\,-\,\omega_\infty|_{C^3}\,\leq\, \eta,
\end{equation}
where $\omega_{Euc}$ denotes the Euclidean metric on $\CC^n$.

We denote by $L_\infty$ the limit of $L$ on $\tilde B_{\frac{1}{2}}(p_\infty)\setminus W^\infty \cup \left(\cup_{i=1}^l D_i^\infty\right)$. Since $\omega_j$ are conic K\"ahler-Einstein metrics with K\"ahler class $2\pi c_1(L)$,
they induce natural hermitian metrics on $L$ which converge to a hermitian metric on $L_\infty$ whose curvature is $\omega_\infty$.
Moreover, since $\omega_\infty$ is smooth outside $W^\infty$, the line bundle
$L_\infty$ can be extended to $\tilde B_{\frac{1}{2}}(0)\setminus W^\infty$. We start with the following lemma:

\begin{lem}\label{circle}
Given any $\epsilon_1>0, \eta>0, A>0$, there exist $\tilde{\epsilon}_1, \tilde{\eta},\delta_1>0$ such that if
$$m(W^\infty)\,\le \,A,\,\,\,\,\,\,vol(\tilde B_1(p_\infty)\,\geq \,(1-\delta_1)\,V_{2n}$$
and for any circle $\gamma$ of radius $\tilde{\epsilon}_1$ in $\tilde B_1(p_\infty)\setminus T_{\tilde{\epsilon}_1}(W^\infty)$, we have
$$|H_\gamma - Id|\,\le\, \tilde \eta,$$
where $H_\gamma$ denotes the holonomy of $L_\infty^\vee \otimes \phi^* L_{Euc}$ around $\gamma$,
there is a norm 1 section $e$ of $L_\infty^\vee \otimes \phi^* L_{Euc}$ over $\tilde B_1(p_\infty)\setminus T_{\epsilon_1}(W^\infty)$
with $|D e|\leq \eta$.
\end{lem}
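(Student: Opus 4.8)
The plan is to read the statement as an approximate-flatness result for the hermitian line bundle $E := L_\infty^\vee \otimes \phi^* L_{Euc}$ equipped with its induced Chern connection $D$. By construction its curvature is $F_D = \phi^*\omega_{Euc} - \omega_\infty$, which by \eqref{eq:t1} can be made bounded in $C^3$ by a quantity $\eta_0$ that we drive to zero by taking $\delta_1$ small; thus $E$ is pointwise almost flat. The only reason a globally parallel unit section need not exist is the topology of the domain $U := \tilde B_1(p_\infty)\setminus T_{\epsilon_1}(W^\infty)$, which is the complement of a neighborhood of the bad set $W^\infty$. The holonomy hypothesis is exactly what is needed to control this topological obstruction, while metric-compatibility of $D$ guarantees that parallel transport preserves norms, so all local models and transition functions will be $U(1)$-valued.

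First I would fix a length scale $r_0 \sim \tilde\epsilon_1$ and, using the almost-Euclidean chart $\phi$ from \eqref{eq:t1}, cover $U$ by geodesically convex balls $B_\alpha$ of radius $r_0$ whose pairwise and triple intersections are convex, hence contractible, with a nerve whose combinatorics is controlled by $n$ alone. On each $B_\alpha$, because the ball is contractible and $|F_D|\leq \eta_0$, I pass to a radial (Coulomb) gauge in which the connection one-form $A_\alpha$ obeys $dA_\alpha = F_D$ and $|A_\alpha|\leq C r_0 \eta_0$; the constant unit section $e_\alpha$ in this gauge then satisfies $|De_\alpha| = |A_\alpha|\leq C r_0 \eta_0$. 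On an overlap I write $e_\beta = g_{\alpha\beta}\,e_\alpha$ with $g_{\alpha\beta}:B_\alpha\cap B_\beta \to U(1)$; since both local sections are nearly parallel, $|dg_{\alpha\beta}|\leq C r_0 \eta_0$, so each $g_{\alpha\beta}$ is within $C r_0\eta_0$ of a constant $c_{\alpha\beta}\in U(1)$.

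The heart of the argument is to realize the constants $c_{\alpha\beta}$ as a \v{C}ech coboundary up to a small error, and here the Minkowski bound and the holonomy hypothesis enter together. Since $m(W^\infty)\leq A$, the set $W^\infty$ is covered by at most $C A\,\tilde\epsilon_1^{\,2-2n}$ balls of radius $\tilde\epsilon_1$, and because $W^\infty$ has dimension at most $2n-2$, the fundamental group of $U$ is generated by loops each homotopic to a circle of radius $\sim\tilde\epsilon_1$ linking one such piece. Every generating circle is, through the chart $\phi$, one of the circles $\gamma$ in the hypothesis, so $|H_\gamma - Id|\leq \tilde\eta$; combining these with the fact that a contractible loop carries holonomy $\exp\!\big(i\int_\Sigma F_D\big)$ with $|\int_\Sigma F_D|\leq C\eta_0$, I conclude that the holonomy of $E$ around every loop in $U$ is within a controlled multiple of $\tilde\eta + \eta_0$ of the identity. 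Choosing a spanning tree of the nerve, I define unit constants $\lambda_\alpha$ by transporting a fixed unit vector along the tree; on each non-tree edge the discrepancy $\bar\lambda_\alpha\lambda_\beta\,\bar c_{\alpha\beta}$ is precisely a loop holonomy, hence small. Gluing $\lambda_\alpha^{-1}e_\alpha$ by a partition of unity $\{\chi_\alpha\}$ subordinate to $\{B_\alpha\}$ with $|d\chi_\alpha|\leq C/r_0$, and renormalizing to unit norm, produces the desired $e$: its covariant derivative splits into a near-parallel term of size $C r_0\eta_0$ and a patching term of size $C(\tilde\eta + r_0\eta_0)/r_0$.

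The decisive quantitative balance, and the step I expect to be the main obstacle, is this last estimate: the partition-of-unity gradient is of order $1/\tilde\epsilon_1$, so the gluing error is of order $\tilde\eta/\tilde\epsilon_1$. One therefore chooses the radius $\tilde\epsilon_1$ first (small enough that the cover exists with convex members and nerve depth bounded by $A$ and $n$), then $\delta_1$ small enough that \eqref{prop-map-0} forces \eqref{eq:t1} with $C\tilde\epsilon_1\,\eta_0 \leq \eta/2$, and finally $\tilde\eta \ll \tilde\epsilon_1\,\eta$ so that $C\tilde\eta/\tilde\epsilon_1 \leq \eta/2$. Making the topological control uniform — that $\pi_1(U)$ is generated by a number of radius-$\tilde\epsilon_1$ circles bounded in terms of $A$ and $n$, so that expressing an arbitrary loop as a product of generators does not accumulate the holonomy error beyond control — is the genuinely delicate point, since it rests on the geometric-measure-theoretic regularity of $W^\infty$ (its bounded Minkowski measure) rather than on soft topology. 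Everything else is the standard almost-flat-bundle patching.
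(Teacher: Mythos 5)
Your argument is correct in outline but takes a genuinely different route from the paper's. The paper proves Lemma \ref{circle} by a soft compactness argument: if the statement fails, one extracts a sequence of counterexamples with $\delta_k\to 0$ and holonomy defects $\eta_i\to 0$, passes to the limit to obtain an exactly flat line bundle $L'$ over $B_1^{2n}(0)\setminus W'$ with $m(W')\le A$, notes that any loop in the complement bounds a disc meeting $W'$ in finitely many points and is therefore homologous to finitely many circles of arbitrarily small radius, concludes that $L'$ has trivial holonomy and hence a parallel unit section, and pulls that section back to contradict the assumed nonexistence of an almost-parallel section. What you propose is precisely the ``effective proof by estimating holonomy in terms of curvature'' that the paper's first sentence mentions and then declines to carry out: \v{C}ech patching of local Coulomb gauges, with the curvature bound from \eqref{eq:t1} controlling the local connection forms and the holonomy hypothesis controlling the constant parts of the $U(1)$-valued transition functions. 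Your version buys explicit dependence of $\tilde\epsilon_1,\tilde\eta,\delta_1$ on $\epsilon_1,\eta,A$, which a contradiction argument cannot give; the price is exactly the step you flag as delicate. The discrepancy on a non-tree edge of your nerve is the holonomy of a possibly long tree loop, and reducing it to the hypothesized radius-$\tilde\epsilon_1$ circles requires bounding both the number of generators and the multiplicities with which such a loop links the pieces of $W^\infty$ --- a quantitative consequence of the Minkowski bound that you assert rather than prove (it needs an integral-geometric argument that a generic spanning disc meets $W^\infty$ in a number of points controlled by $A$ and $\tilde\epsilon_1$). The quantifier order does rescue you in principle, since $\tilde\eta$ is chosen last and can absorb any finite accumulation factor $M(\tilde\epsilon_1,A,n)$, but the finiteness of $M$ is the real content. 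The paper's limit argument sidesteps this entirely: in the limit the bundle is exactly flat and the small-circle holonomies tend to the identity, so for each fixed loop the finitely many contributions vanish and no uniform accounting is ever needed.
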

\begin{proof}
This is a pretty trivial fact. For the readers' convenience, we include a standard proof by contradiction. One can also give an effective proof by estimating holonomy in terms of curvature. If Lemma is false, then for certain $\epsilon_1>0, \eta>0, A>0$, we can find $\delta_k\mapsto 0$, $\eta_i\mapsto 0$
and $W_{ik} \subset \tilde B_1(p_{ik})$ satisfying: $$m(W_{ik})\,\le \,A,\,\,\,\,\,\,vol(\tilde B_1(p_{ik})\,\geq \,(1-\delta_k)\,V_{2n}$$
and the holonomy of $L_\infty^\vee \otimes \phi^* L_{Euc}$ around any circle of radius $\frac{1}{i}$ inside $B_1(p{ik})\setminus W_{ik} $
differs from $Id$ no more than $\eta_i$, but there is no norm 1 section $e$ with $|D e|\leq \eta$. Since $\omega_\infty$ can be approximated by smooth metrics with Ricci curvature bounded from below, by Letting $k$ go to $\infty$, we can get $W_{i}$ inside $B_1^{2n}(0)$ with $m(W_{i})\le A$ and a flat bundle $L_{i}$ whose holonomy around any circle of radius $\frac{1}{i}$ inside $B_1^{2n}(0)\setminus W_{i} $ is less than $\eta_i$. It follows that outside the limit $W'$ of $W_{i}$ there is a flat bundle $L'$. Also we have $m(W')\leq A$, so any loop $\gamma\subset B_1^{2n}(0)\setminus W'$ bounds a disc $\Sigma$ which intersects with $W'$ at finitely many, say m,  points, consequently, $\gamma$ is homologous to m circles of arbitrarily small radius. By our choice of $W_{i}$, the holonomy of a circle in $
B_1^{2n}(0)\setminus W'$ can be as close to $Id$ as one wants so long its radius is sufficiently small. Since $L'$ is flat, it follows that the holonomy of $L'$ around $\gamma$ is trivial. So $L'$ must be a trivial bundle with a trivial connection. Then there is a norm 1 section $e_i$ of $L_{i}$ for $i$ sufficiently large
such that $|D e_i|\leq \eta$. It is a contradiction. The lemma is proved.
\end{proof}

\begin{lem}\label{stokes}
Given any $\tilde\epsilon_1>0$, there exists $\delta_2>0$, such that if
$$vol(\tilde B_1(p_\infty)\,\geq\, (1-\delta_2)\,V_{2n},$$
then the holonomy of $L_\infty$ around any circle of radius $\tilde{\epsilon}_1$ in $\tilde B_1(p_\infty) \setminus W^\infty$ is less than $10\,\tilde{\epsilon}^2_1$.
\end{lem}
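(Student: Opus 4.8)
The plan is to read off the holonomy from the curvature by an abelian Stokes argument and then to bound the resulting curvature integral by the almost-Euclidean area of a small bounding disc.

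Fix a circle $\gamma$ of radius $\tilde\epsilon_1$ in $\tilde B_1(p_\infty)\setminus W^\infty$ and choose a disc $\Sigma$ with $\partial\Sigma=\gamma$. Since $L_\infty$ is a line bundle, its structure group is abelian, so for a contractible loop the holonomy of the Chern connection is exactly the exponential of the integral of the curvature over any bounding surface; as this curvature is $\omega_\infty$, we get $H_\gamma=\exp\!\big(\sqrt{-1}\int_\Sigma\omega_\infty\big)$, the sign being fixed by the orientation of the connection. Hence $|H_\gamma-Id|=\big|e^{\sqrt{-1}\int_\Sigma\omega_\infty}-1\big|\le\big|\int_\Sigma\omega_\infty\big|$, and by the Wirtinger inequality for the K\"ahler form $\omega_\infty$ (which calibrates $2$-planes in both orientations) one has $\big|\int_\Sigma\omega_\infty\big|\le \mathrm{Area}_{\omega_\infty}(\Sigma)$. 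The whole estimate thus reduces to producing a bounding disc of area strictly below $10\,\tilde\epsilon_1^2$.

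To bound the area I would invoke the volume pinching hypothesis. By the volume almost-rigidity of Cheeger-Colding, $vol(\tilde B_1(p_\infty))\ge(1-\delta_2)V_{2n}$ forces $\tilde B_1(p_\infty)$ to be Gromov-Hausdorff close to the unit Euclidean ball, and together with the smooth convergence on the regular part recorded in \eqref{eq:t1} the metric $\omega_\infty$ is $C^3$-close to $\omega_{Euc}$ away from $W^\infty$. Transporting through $\phi^{-1}$ a flat round disc that bounds the nearly circular curve $\phi(\gamma)$ then produces a disc $\Sigma$ with $\mathrm{Area}_{\omega_\infty}(\Sigma)\le(1+\epsilon(\delta_2))\,\pi\,\tilde\epsilon_1^2$, where $\epsilon(\delta_2)\to 0$ as $\delta_2\to 0$. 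Since $\pi<10$, taking $\delta_2$ small makes $\mathrm{Area}_{\omega_\infty}(\Sigma)<10\,\tilde\epsilon_1^2$, and the previous paragraph yields $|H_\gamma-Id|<10\,\tilde\epsilon_1^2$.

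The main obstacle is to legitimize the Stokes computation across the singular set, since $L_\infty$ and $\omega_\infty$ are only smooth on the complement of $W^\infty$ and a bounding disc generally meets $W^\infty$. Here I would use that $m(W^\infty)\le A$ forces $W^\infty$ to have real codimension at least two, so that, exactly as in the proof of Lemma \ref{circle}, a generic disc $\Sigma$ meets $W^\infty$ in at most finitely many points. Deleting small sub-discs around these points, applying the holonomy identity on the punctured surface (which lies in the smooth locus where $L_\infty$ and its connection are defined), and letting the radii of the sub-discs tend to zero, one checks that the deleted contributions vanish: $\omega_\infty$ extends across $W^\infty$ as a closed positive $(1,1)$-current of locally finite mass with locally bounded potential, so it carries no atomic mass on the codimension-two set and the small loops about the punctures have holonomy tending to $Id$. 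Equivalently, one may replace $\omega_\infty$ by the smooth metrics with Ricci curvature bounded below that approximate it, apply Stokes there, and pass to the limit. This makes both the identity $H_\gamma=\exp\!\big(\sqrt{-1}\int_\Sigma\omega_\infty\big)$ and the area bound rigorous, completing the estimate.
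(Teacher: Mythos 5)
Your overall mechanism -- read the holonomy off as $\int_\Sigma(\text{curvature})$ by an abelian Stokes argument and then bound that integral by (roughly) the Euclidean area $\pi\tilde\epsilon_1^2$ of a small bounding surface -- is the same as the paper's. The difference, and the gap, is in how the bound on $\int_\Sigma\omega$ is obtained. The paper does not work on the limit space at all: after perturbing the circle off $\bigcup_{i=1}^l D_i^\infty$ it passes to the approximating manifolds $(M,\tilde\omega_j)$, builds the Cheeger--Colding splitting functions $\Phi=(f_1,\dots,f_{2n-2})$ and the auxiliary functions $\mathbf r,\mathbf u$, and invokes Theorem 2.63 of \cite{CCT} to get $\int_{\Sigma_{z,\mathbf u}}\omega=\tfrac{u^2}{2}+\Psi(\delta_2)$ for a.e.\ slice $\Sigma_{z,u}=\Phi^{-1}(z)\cap\{\mathbf u\le u\}$; Stokes applied to the connection $1$-form on these slices then gives the bound. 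The point of that integral (``for a.e.\ slice'') estimate is precisely that it is insensitive to the absence of any pointwise metric control near the conic divisors $D_i$ ($1\le i\le l$) and near $W^\infty$.

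Your route replaces this by the $C^3$-closeness \eqref{eq:t1}, which only holds on $\tilde B_{3/4}(p_\infty)\setminus T_{\epsilon_1}(W^\infty)$, and then an excision at $W^\infty$. That excision is where the argument breaks: the transported flat disc must cross the region where \eqref{eq:t1} fails, and there you have neither an area bound (so Wirtinger gives nothing) nor a bound on $\int\omega_\infty$ over that piece. The justification you offer -- that a closed positive $(1,1)$-current with bounded local potential ``carries no atomic mass'' on a codimension-two set -- controls Lelong numbers, i.e.\ the mass of $dd^c\varphi$ on \emph{complex} discs; it does not bound $\int_{\Sigma'}dd^c\varphi$ for a small piece $\Sigma'$ of a general real $2$-surface near $W^\infty$, nor the boundary terms $\int_{\partial\Sigma'}d^c\varphi$ when $\varphi$ is merely bounded. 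Your fallback of approximating by smooth metrics with Ricci bounded below and passing to the limit only relocates the problem to $(M,\omega_j)$, where near the divisors with $\beta^i_j\to 1$ the metric is conic and not pointwise close to Euclidean -- which is exactly why the paper resorts to the slice theorem of \cite{CCT} rather than a pointwise area estimate. To repair your proof you would need either to choose the bounding surface to be (a piece of) a complex curve, so that positivity and the bounded potential do control the excised mass, or to import the $\int_{\Sigma_{z,u}}\omega=\tfrac{u^2}{2}+\Psi(\delta_2)$ estimate as the paper does.
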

\begin{proof}
For a circle of radius $\tilde{\epsilon}_1$, after a slight perturbation we can assume that it has no intersection with $\bigcup_{i=1}^l D_i^\infty$. By the smooth convergence outside $W^\infty\bigcup_{i=1}^l D_i^\infty$, it suffices to estimate the holonomy for $L$ over $\tilde B_1(p_j)$. By the volume convergence result
in \cite{Co} and Appendix 1 in \cite{CC1}, for $\delta_2$ small, $\tilde{B}_1(p_j)$ is homeomorphic to $B_1^{2n}(0)$ for $j$ sufficiently large. So the connection on $L$ is defined by a 1-form $\varphi$. Pulling back the coordinates functions of $B_1^{2n}(0)$ to $\tilde{B}_1(p_j)$, we can get the splitting functions $(f_1,f_2,...,f_{2n})$ satisfying:
\begin{align}\label{orthogonal-5}
 &\frac{1}{\text{vol}(\tilde{B}_1(p_j))}\,\int_{\tilde{B}_1(p_j)}
 \,\Sigma_i\,|{\rm Hess }\,f_i|^2+\Sigma_{i\neq j}|\langle\nabla f_i,\nabla f_j\rangle|\notag\\
 &+\frac{1}{\text{vol}(\tilde{B}_1(p_{\infty}))}\,\int_{\tilde{B}_1(p_\infty)}\,\Sigma_i(|\nabla f_i|-1)^2 \,<\,\Psi(\delta_2).
 \end{align}
 As in [CCT], there are functions $\bf r, \bf{u}$ satisfying:

 \begin{eqnarray}\nonumber
\left\{                  
\begin{array}{lllll}
\Delta {\bf r}^2=2n\\
 {\bf r}^2|_{\partial \tilde{B}_{\frac{1}{2}}(p_j)}\,=\,\frac{1}{4}.
\end{array}            
\right.
\end{eqnarray}
and
\begin{eqnarray}\nonumber
\left\{                  
\begin{array}{lllll}
\Delta {\bf u}^2\,=\,2n-2\\
 {\bf u}^2|_{\partial \tilde{B}_{\frac{1}{2}}(p_j)}\,=\,{\bf r}^2\,-\,\sum_{j=1}^{2n-2}\,f_j^2.        \end{array}
 \right.
\end{eqnarray}

 Denoting $\Phi=(f_1, f_2,...,f_{2n-2})$ as in Theorem 2.63 in \cite{CCT},
 we know that away from a subset of measure $\Psi(\delta)$, it holds that
 $$\int_{\Sigma_{z,{\bf u}}}\omega=\frac{u^2}{2}+\Psi(\delta_2),$$
 where $\Sigma_{z, u}=\Phi^{-1}(z)\cap\{{\bf u}\leq u\}$.
 For $\delta_2$ small, integrating by parts, we get
 $$\int_{\partial \Sigma_{z, \tilde\epsilon_1} }\,\varphi \,=\,\int_{\Sigma_{z,\tilde\epsilon_1}}\, d\varphi\,=\,\int_{\Sigma_{z,\tilde\epsilon_1}}\,\omega\,\leq\, 10\,\tilde\epsilon_1^2.$$
\end{proof}

\begin{lem}\label{holo}
For any $\epsilon_1, \eta>0$, there exists $\delta>0$ such that if
$$vol(\tilde B_1(p_\infty))\,\geq\, (1-\delta)\,V_{2n},$$
then there is a hermitian bundle map $\psi$ from $L_\infty$ to $L_{Euc}$ over $\tilde B_{\frac{1}{2}}(p_\infty)\setminus T_{\epsilon_1}(W^\infty)$
lifting $\phi$ such that $|D\psi|\,\leq\,\eta$.
\end{lem}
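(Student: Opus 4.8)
The plan is to recast the sought hermitian bundle map as a section and then quote Lemma \ref{circle}. Indeed, a norm-$1$ hermitian bundle map $\psi: L_\infty \to L_{Euc}$ lifting $\phi$ is exactly the same datum as a unit-norm section $e$ of the hermitian line bundle $L_\infty^\vee \otimes \phi^* L_{Euc}$, and under this identification the induced connection satisfies $|D\psi| = |De|$. Hence it suffices to produce such a section with $|De|\le \eta$, which is precisely the output of Lemma \ref{circle}. So the entire task is to verify the hypothesis of Lemma \ref{circle} for this bundle, namely that the holonomy $H_\gamma$ of $L_\infty^\vee \otimes \phi^* L_{Euc}$ around small circles $\gamma$ is close to the identity.

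First I would exploit multiplicativity of holonomy. Since every bundle here is a hermitian line bundle, the holonomy is $U(1)$-valued and factors as $H_\gamma = H_\gamma(L_\infty)^{-1}\cdot H_{\phi(\gamma)}(L_{Euc})$, whence $|H_\gamma - \mathrm{Id}| \le |H_\gamma(L_\infty) - \mathrm{Id}| + |H_{\phi(\gamma)}(L_{Euc}) - \mathrm{Id}|$. The first term is controlled by Lemma \ref{stokes}: for $\delta$ small, the holonomy of $L_\infty$ around any circle of radius $\tilde\epsilon_1$ in $\tilde B_1(p_\infty)\setminus W^\infty$ is at most $10\tilde\epsilon_1^2$, after the harmless perturbation of $\gamma$ that keeps it off the limit divisors $\bigcup_{i=1}^l D_i^\infty$. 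The second term I would estimate directly from curvature: $L_{Euc}$ has curvature $\omega_{Euc}$, so by Stokes the holonomy around $\phi(\gamma)$ equals $\exp\bigl(\sqrt{-1}\int_\Sigma \omega_{Euc}\bigr)$ for a disk $\Sigma$ bounding $\phi(\gamma)$; because \eqref{eq:t1} forces $\phi$ to be a near-isometry, $\phi(\gamma)$ is a near-circle of radius comparable to $\tilde\epsilon_1$, and this integral is $O(\tilde\epsilon_1^2)$. Combining the two gives $|H_\gamma - \mathrm{Id}| \le C\tilde\epsilon_1^2$ for a universal constant $C$.

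With this bound I would then assemble the constants. Given $\epsilon_1,\eta$ (and the Minkowski bound $A$), feed them into Lemma \ref{circle} to obtain thresholds $\tilde\epsilon_1,\tilde\eta,\delta_1$, arranging the radius $\tilde\epsilon_1$ small enough that $C\tilde\epsilon_1^2 \le \tilde\eta$, so that the holonomy estimate above genuinely meets the hypothesis of Lemma \ref{circle} at radius $\tilde\epsilon_1$. Next take $\delta \le \min\{\delta_1,\delta_2\}$, where $\delta_2$ is the constant from Lemma \ref{stokes} attached to this $\tilde\epsilon_1$, so that the single volume assumption $\mathrm{vol}(\tilde B_1(p_\infty)) \ge (1-\delta)V_{2n}$ activates both lemmas at once. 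Lemma \ref{circle} then delivers a norm-$1$ section $e$ of $L_\infty^\vee \otimes \phi^* L_{Euc}$ over $\tilde B_1(p_\infty)\setminus T_{\epsilon_1}(W^\infty)$ with $|De|\le \eta$; restricting to $\tilde B_{\frac12}(p_\infty)\setminus T_{\epsilon_1}(W^\infty)$ and reinterpreting $e$ as the bundle map $\psi$ finishes the argument.

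I expect the main obstacle to be the consistent ordering and compatibility of the smallness constants $\tilde\epsilon_1,\tilde\eta,\delta$ shared between Lemma \ref{circle} and Lemma \ref{stokes} — specifically, ensuring that the quadratic-in-radius holonomy bound $C\tilde\epsilon_1^2$ really does fall below the threshold $\tilde\eta$ that Lemma \ref{circle} demands at that same radius (one must treat the radius as a freely shrinkable parameter, which the contradiction-based proof of Lemma \ref{circle} permits). The remaining difficulties are only minor geometric bookkeeping: perturbing the circles $\gamma$ away from the limit divisors $D_i^\infty$ and transporting the Euclidean curvature computation across the near-isometry $\phi$.
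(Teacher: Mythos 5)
Your proposal is correct and follows essentially the same route as the paper's (very terse) proof: identify the bundle map with a unit section of $L_\infty^\vee\otimes\phi^*L_{Euc}$, verify the holonomy hypothesis of Lemma \ref{circle} via Lemma \ref{stokes} together with the Euclidean curvature bound, and take $\delta=\min\{\delta_1,\delta_2\}$ after shrinking the circle radius so the quadratic holonomy bound falls below $\tilde\eta$. You merely spell out the details (the $L_{Euc}$ holonomy estimate and the constant bookkeeping) that the paper leaves implicit.
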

\begin{proof}
We can choose $\epsilon_1$ in Lemma \ref{circle} such that $\tilde\epsilon_1^2\leq \tilde \eta$. Now by Lemma \ref{circle} and Lemma \ref{stokes}, we only need to take $\delta=\min\{\delta_1, \delta_2\}$.
\end{proof}
Now we are going to prove Proposition \ref{map}.
\begin{proof} Lemma \ref{holo} gives a bundle map $\psi: L_\infty\mapsto L_{Euc}$
over $\tilde B_{\frac{1}{2}}(p_\infty)\setminus T_{\epsilon_1}(W^\infty)$ lifting $\phi$ such that $|D\psi|\,\leq\,\eta$.
Since $m(T^\infty)\leq A$, by using a well-known property of capacity, we get a cut-off function $\gamma_{\bar\epsilon}$ for any given $\bar\epsilon>0$ satisfying:
Its support is contained in $T_{\epsilon_1}(W^\infty)$ for $\epsilon_1=\epsilon_1(\bar\epsilon,A)$ and its gradient has $L^2$ norm smaller than $\bar\epsilon$.
We choose $\delta_0=\delta_0(\epsilon_1, \eta)$ such that \eqref{eq:t1} holds. As in the proof of the partial $C^0$ estimate in Section 5 in \cite{Ti15}, choosing
$\bar\epsilon$ and $\eta=\eta(\bar\epsilon)$ (also see the proof in Lemma \ref{cut-off}), we can find a multiple $\ell=\ell(A)$ and holomorphic sections $s_j, s^{\alpha}_j$ of $L^\ell$ from approximate holomorphic sections
$$\tau\,=\,\psi^{-1}(\gamma_{\bar\epsilon} e),~~~ \tau^{\alpha}\,=\,\psi^{-1}(\gamma_{\bar\epsilon} z^\alpha ),~~~\alpha \,=\,1,\cdots,n,$$
where $e$ is the trivial section and $z^\alpha$ are the coordinates of $\CC^n$. Then we define $\rho=\frac{1}{\ell}$ and the map $F_j$ by
$$F_j\,=\,\left(\frac{s^{1}_j}{s_j},..., \frac{s^{n}_j}{s_j}\right).$$
\end{proof}

Now we use induction on the volume $\bar A=\sum_{i=1}^l vol(D_i\cap \tilde B_1(p_j))$ to prove Proposition \ref{lr}. Define the volume density function by
\begin{equation}\label{eq:t2}
V(x,s)\,=\,r^{2-2n}\sum_{i=1}^l \,vol(D_i\cap B_r(x,\tilde \omega_j)).
\end{equation}
Note that it depends on $\omega_j$ though we do not make it explicitly for simplicity.

We start with the following lemma:
\begin{lem}\label{induc}
There are $\delta_1=\delta_1(n,\bar\beta), \kappa=\kappa(n)$ such that if
$$\text{vol }(\tilde B_1(p_j))\,\geq\, (1-\delta_1)\,V_{2n}, $$
then $V(q,\bar r)\geq \kappa$ for all $\bar r$ and $q\in \bigcup_{i=1}^l D_i$
such that $B_{\bar r}(q,\tilde\omega_j)\subset \tilde B_{\frac{1}{2}}(p_j)$.
\end{lem}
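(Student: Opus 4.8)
The plan is to deduce the statement from a density lower bound for the individual divisors, obtained by monotonicity. Since $V(q,\bar r)$ is a sum of nonnegative terms, it suffices to bound from below the contribution of a single $D_i$ through $q$, that is to prove $vol(D_i\cap B_{\bar r}(q,\tilde\omega_j))\ge \kappa\,\bar r^{2n-2}$ for some $\kappa=\kappa(n)$. First I would extract the geometric consequences of the hypothesis $vol(\tilde B_1(p_j))\ge(1-\delta_1)V_{2n}$. By Colding's volume convergence and Cheeger--Colding almost-rigidity, $\tilde B_{1/2}(p_j)$ is almost Euclidean at every scale and base point, so by the volume-comparison remark preceding the lemma it meets none of the small-angle divisors $D_i$ with $i>l$; moreover the ambient density at a point $q\in D_i$ equals the cone angle factor $\beta^i_j$ (the tangent cone there being $C_{\beta^i_j}\times\mathbb{C}^{n-1}$), so the almost-maximal volume forces $\beta^i_j$ itself to be close to $1$, with closeness controlled by $\delta_1$. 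Finally, off $\bigcup_{i\le l}D_i$ the metric is K\"ahler--Einstein, so Anderson's $\epsilon$-regularity gives a curvature bound there, tending to $0$ as $\delta_1\to 0$.

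With this in hand I would set up the monotonicity formula. Because $\tilde\omega_j$ is K\"ahler, the closed form $\tilde\omega_j^{\,n-1}/(n-1)!$ calibrates $D_i$, so $D_i$ is homologically volume-minimizing and in particular stationary; the usual first-variation computation then applies to $r\mapsto r^{2-2n}vol(D_i\cap B_r(q))$. Its curvature error terms are supported away from $D_i$, where the Einstein bound above controls them, while the contribution of the conic locus is governed by the angle deficit $1-\beta^i_j$, which the previous paragraph has forced to be small. Hence $r\mapsto e^{Cr}r^{2-2n}vol(D_i\cap B_r(q))$ is non-decreasing, with $C=C(n)\to 0$ as $\delta_1\to 0$.

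It remains to compute the small-radius limit. At a point $q$ on $D_i$ and on no other divisor, the tangent cone of $(M,\tilde\omega_j)$ is $C_{\beta^i_j}\times\mathbb{C}^{n-1}$ with $D_i$ passing to $\{0\}\times\mathbb{C}^{n-1}$; since the two-dimensional cone factor contributes no length at its vertex, this limit divisor carries the flat metric of $\mathbb{C}^{n-1}$ and
$$\lim_{r\to 0}r^{2-2n}vol(D_i\cap B_r(q))\,=\,V_{2n-2},$$
independently of $\beta^i_j$ (at a crossing of several $D_i$ the limit is a larger multiple of $V_{2n-2}$, which only helps). Combined with the almost-monotone quantity this yields $V(q,\bar r)\ge \tfrac12 V_{2n-2}=:\kappa(n)$ for all $\bar r$ with $B_{\bar r}(q)\subset\tilde B_{1/2}(p_j)$, once $\delta_1=\delta_1(n,\bar\beta)$ is small enough.

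The main obstacle I anticipate is making the monotonicity rigorous despite the conic singularity of $\tilde\omega_j$ along $D_i$, which blocks a direct appeal to the classical monotonicity formula for minimal submanifolds. I would circumvent this by running the argument on the smooth $K$-approximations $\omega_{j,\delta}$ of Theorem \ref{ricci}, whose Ricci curvature is bounded below and which converge to $\tilde\omega_j$ in the Gromov--Hausdorff topology and smoothly off $\bigcup D_i$: there $D_i$ is an honest minimal submanifold, the almost-monotone bound is classical, and one passes to the limit using the smooth convergence of induced volumes on compact subsets of $D_i\setminus\bigcup_{i'\ne i}D_{i'}$. Keeping the curvature and angle-deficit errors uniform in the approximation parameter is the only delicate point, and it is exactly here that forcing $\beta^i_j$ close to $1$ pays off.
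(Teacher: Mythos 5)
Your route (calibration, stationarity, and the Riemannian monotonicity formula for $r\mapsto r^{2-2n}\mathrm{vol}(D_i\cap B_r(q))$, with the conic singularity handled by passing to the smooth $K$-approximations) is genuinely different from the paper's, and it has a gap at exactly the point you flag as delicate. The almost-monotonicity formula for a stationary $(2n-2)$-current in a Riemannian manifold is obtained from the first variation tested against (a cut-off of) $\nabla\rho^2$, and its error term is $\int_{D_i\cap B_r}\bigl(\Delta_{D_i}\tfrac{\rho^2}{2}-(2n-2)\bigr)$; controlling it requires Hessian comparison for $\rho^2$, hence two-sided sectional curvature bounds, \emph{in a neighborhood of $D_i$ itself}. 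Your claim that ``the curvature error terms are supported away from $D_i$'' is not correct: they are integrals over $D_i\cap B_r$. Anderson's $\epsilon$-regularity gives curvature bounds only off $\bigcup D_i$, which is useless here, and the smooth $K$-approximations of Theorem \ref{ricci} only have Ricci bounded below --- their sectional curvatures necessarily blow up near $D_i$ since they converge to a conic metric --- so running the classical monotonicity on the approximations does not make the error terms uniform. Forcing $\beta^i_j$ close to $1$ (which you correctly extract from almost-maximal volume via Cheeger--Colding) does not repair this, because ``angle deficit small'' is not a curvature bound at any fixed $j$.

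The paper circumvents precisely this obstruction by a complex-analytic device: it picks the pair $(x_j,s_j)$ \emph{minimizing} $V(x,s)$ over $\tilde B_{1/2}(p_j)$, rescales to $\bar\omega_j=s_j^{-2}\tilde\omega_j$, observes that minimality forces the Minkowski bound $m\bigl(B_1(x_j,\bar\omega_j)\cap\bigcup_{i\le l}D_i\bigr)\le 2^{2n}$ by a Vitali covering argument, and then invokes Proposition \ref{map} to produce a holomorphic homeomorphism $F_j$ onto a bounded open subset of $\CC^n$ with $|\nabla F_j|\le C(n)$ and a bounded K\"ahler potential. The divisors push forward to analytic subvarieties of $\CC^n$, where the \emph{Euclidean} Lelong--Bishop monotonicity gives density $\ge V_{2n-2}$ at every scale with no curvature hypotheses at all, and the Lipschitz bounds on $F_j$ transfer this back to $\kappa(n)\le V(x_j,s_j)=\min V$. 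If you try to salvage your calibration argument by writing $\mathrm{vol}(D_i\cap B_r)=\frac{1}{(n-1)!}\int_{D_i\cap B_r}\omega^{n-1}$ and running a Lelong--Jensen argument, you will find you need a local holomorphic chart in which the K\"ahler potential is comparable to the squared Euclidean distance --- which is exactly the content of Proposition \ref{map}, i.e., you are led back to the paper's proof. The missing idea in your proposal is therefore the reduction to the Euclidean monotonicity via the partial-$C^0$ chart (together with the minimizing-point trick that supplies the Minkowski hypothesis needed to build that chart at the worst scale).
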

\begin{proof}
Let $\delta=\delta(2^{2n},\bar\beta)$ as in Proposition \ref{map}. Let $(x_j,s_j)$ be the point in $\tilde B_{\frac{1}{2}}(p_j)$ attaining the minimum of $V(x,s)$ an $\bar\omega_j= s_j^{-2}\,\tilde\omega_j$. Clearly, for $B_j=B_1(x_j,\bar\omega_j)$, the geodesic ball with radius $1$ of $\bar\omega_j$,
we have
$$m(B_j\bigcap \bigcup_{i=1}^l\,D_i)\,\leq\, 2^{2n}.$$
Let $B_\infty$ be the limit of $B_j$ and $W^\infty$ be $B_\infty \bigcap\bigcup_{i=1}^l D_i^\infty$,
then $\bar \omega_j$ converges to $\bar \omega_\infty$ smoothly in $B_1(x_\infty,\bar\omega_\infty)\setminus W^\infty$, where $x_\infty = \lim x_j$.
Applying Proposition \ref{map}, we get a holomorphic map $F_j$ with bounded image. Using the monotonicity of corresponding $V(x,s)$ in the Euclidean space, we have
$V(x_j,s_j)\geq \kappa(n)$. The lemma is proved.
\end{proof}

The following is a known consequence of the well-established regularity theory.
\begin{lem}\label{chart}
For any $C, A>0$, suppose that
$$\sum_{i=1}^l \,vol(D_i\bigcap \tilde B_1(p_j))\,\leq\, A$$
and there is a holomorphic map $F_j:\tilde B_1(p_j)\rightarrow \mathbb{C}^n$ which is a homeomorphism to its image
and satisfies
$$(F_j)_*\omega_j\,=\,\sqrt{-1}\,\partial \bar\partial \,\varphi_j\,\,\,\,{\rm with}\,\,\,|\varphi_j|\,\leq\, C \,\,\,{\rm  and}\,\,\,|\nabla F_j|\,\leq\, C.$$
Then the limiting metric $\tilde d$ on $\tilde B_{\frac{1}{2}}(p_\infty)$ is induced by a smooth K\"ahler-Einstein metric $\omega_\infty$ and
$D_i^\infty$ are divisors in $\tilde B_{\frac{1}{2}}(p_\infty)$.
\end{lem}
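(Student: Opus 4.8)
The plan is to transplant the whole problem to a fixed Euclidean domain by means of the holomorphic charts $F_j$ and then run the standard pluripotential-theoretic regularity machinery for the complex Monge-Amp\`ere equation. Concretely, since each $F_j$ is a homeomorphism onto its image with $|\nabla F_j|\leq C$ and since, by Proposition \ref{map}, $F_j(\tilde B_1(p_j))$ contains a fixed ball, the push-forward metrics $(F_j)_*\tilde\omega_j=\sqrt{-1}\,\partial\bar\partial\,\varphi_j$ are all defined on a common domain $\Omega\subset\CC^n$ with $|\varphi_j|\leq C$. Writing the conic K\"ahler-Einstein equation in this chart, $\varphi_j$ satisfies a complex Monge-Amp\`ere equation of the form
$$(\sqrt{-1}\,\partial\bar\partial\,\varphi_j)^n\,=\,e^{h_j}\,\prod_{i=1}^l\,|\sigma_i^j|^{-2(1-\beta^i_j)}\,\omega_{Euc}^n,$$
where $\sigma_i^j=(F_j)_*S_i$ are the push-forwards of the defining sections of $D_i$ and $h_j$ is uniformly bounded, collecting the term $-t_j\varphi_j$ together with smooth background data.

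First I would establish a uniform $L^p$ bound, for every fixed $p<\infty$, on the right-hand side; this is exactly where the two hypotheses enter. The volume bound $\sum_{i=1}^l \mathrm{vol}(D_i\cap\tilde B_1(p_j))\leq A$ controls the size and multiplicity of $\{\sigma_i^j=0\}$, while in the regime under consideration the cone angles satisfy $\beta^i_j\to 1$, so the exponents $2(1-\beta^i_j)$ tend to $0$. Since $|z|^{-2(1-\beta)}$ is locally in $L^p$ precisely when $p(1-\beta)<1$, for $j$ large each factor lies in $L^p$ with norm controlled by the divisor volume, uniformly in $j$. With this uniform control in hand, Kolodziej's $L^\infty$ and stability estimates give a uniform modulus of continuity for $\varphi_j$, and the standard interior regularity for the Monge-Amp\`ere equation away from $\bigcup_i\{\sigma_i^j=0\}$ then yields, after passing to a subsequence, $C^\infty_{loc}$ convergence $\varphi_j\to\varphi_\infty$ on $\Omega\setminus\bigcup_i D_i^\infty$, where $\varphi_\infty$ solves $(\sqrt{-1}\,\partial\bar\partial\,\varphi_\infty)^n=e^{h_\infty}\,\omega_{Euc}^n$ with $h_\infty$ smooth, the conic weights having disappeared because $1-\beta^i_j\to 0$. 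Thus $\omega_\infty=\sqrt{-1}\,\partial\bar\partial\,\varphi_\infty$ is a smooth K\"ahler-Einstein metric on $\Omega\setminus\bigcup_i D_i^\infty$.

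Next I would upgrade this to smoothness across the limiting divisors and identify it with the Gromov-Hausdorff limit. Because $\bigcup_i D_i^\infty$ has real codimension two and $\varphi_\infty$ is a bounded $\omega_{Euc}$-plurisubharmonic function, the Monge-Amp\`ere equation continues to hold across it in the sense of currents with smooth positive right-hand side; a removable-singularity argument together with interior elliptic regularity then shows that $\varphi_\infty$ is smooth on all of $\Omega$ and that $\omega_\infty$ is a genuine smooth K\"ahler-Einstein metric. To identify $\omega_\infty$ with the limiting distance $\tilde d$ on $\tilde B_{\frac{1}{2}}(p_\infty)$, I would invoke Anderson's theorem (Lemma \ref{angle} and Lemma \ref{beta1}), which gives smooth convergence of $\tilde\omega_j$ on the regular part; since $F_j$ transports this regular part into $\Omega$ and $\varphi_j\to\varphi_\infty$ smoothly there, the metric induced by $\omega_\infty$ agrees with $\tilde d$ on a dense set of full $\mathcal{H}^{2n}$-measure, hence everywhere.

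Finally, to show that each $D_i^\infty$ is a genuine divisor, I would use the uniform gradient estimate of Lemma \ref{gradient} for the defining sections $S_i$: after the holomorphic change of charts $F_j$, the functions $\sigma_i^j$ form a normal family of holomorphic functions with uniform sup- and gradient bounds, so a subsequence converges in $C^\infty_{loc}$ to a holomorphic $\sigma_i^\infty$, and $D_i^\infty=\{\sigma_i^\infty=0\}$. The volume lower bound from Lemma \ref{induc} guarantees $\sigma_i^\infty\not\equiv 0$, so $D_i^\infty$ is an honest analytic hypersurface rather than empty or all of the ball. I expect the \emph{main obstacle} to be precisely the passage across the limiting divisor set in the third step: reconciling the purely analytic $C^\infty_{loc}$ convergence of the potentials off $\bigcup_i D_i^\infty$ with the metric convergence, and ruling out any surviving conic singularity in the limit despite each $\omega_j$ being genuinely conic along $D_i$. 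This is where the quantitative vanishing of $1-\beta^i_j$ and the uniform $L^p$ bound must be combined carefully to exclude residual curvature concentration along $D_i^\infty$.
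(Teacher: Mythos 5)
Your overall strategy --- push everything forward by $F_j$, read off the complex Monge--Amp\`ere equation for $\varphi_j$ in the chart, let the conic weights disappear as $\beta^i_j\to 1$, and get the divisors $D^\infty_i$ from a normal family of defining functions with the volume bound --- is the same as the paper's. But there is a genuine gap at exactly the point you flag as the main obstacle, and your proposed fix does not close it. You never use the hypothesis $|\nabla F_j|\le C$ except to say the images lie in a common domain, whereas in the paper this is the decisive input: $|\nabla F_j|_{\tilde\omega_j}\le C$ means $F_j^*\omega_{Euc}\le C^2\tilde\omega_j$, i.e.\ $\omega_{Euc}\le C^2\,\sqrt{-1}\,\partial\bar\partial\varphi_j$ uniformly in $j$. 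Combined with the fact that the Monge--Amp\`ere density $\prod_i|f_i|^{2(\beta^i_j-1)}e^{-\varphi_j}|U|^2$ tends to a bounded function a.e.\ as $\beta^i_j\to 1$, the strict lower bound on the Hessian forces a matching upper bound on its largest eigenvalue, so the limit satisfies $C^{-1}\omega_{Euc}\le\sqrt{-1}\,\partial\bar\partial\varphi_\infty\le C\,\omega_{Euc}$ \emph{across} $\bigcup_iD^\infty_i$. That two-sided (i.e.\ $C^{1,1}$-type) bound is what feeds into Blocki's regularity theorem \cite{Bl} and gives smoothness of $\omega_\infty$ everywhere.

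Your substitute for this step --- Kolodziej stability for a modulus of continuity (unnecessary anyway, since $|\varphi_j|\le C$ is hypothesized), followed by ``the equation holds across a codimension-two set in the sense of currents with smooth positive right-hand side, so a removable-singularity argument plus interior elliptic regularity gives smoothness'' --- is not a valid argument. The complex Monge--Amp\`ere operator has no unconditional interior regularity: there exist bounded, even continuous, plurisubharmonic solutions of $(\sqrt{-1}\,\partial\bar\partial u)^n=f\,\omega_{Euc}^n$ with $f$ smooth and strictly positive that are not $C^2$. Removability of a codimension-two set does not manufacture the missing second-order a priori estimate; some uniform control of $\Delta\varphi_j$ (or equivalently the two-sided Hessian bound above) is required before Evans--Krylov/Blocki can be invoked, and that control is exactly what the unused hypothesis $|\nabla F_j|\le C$ provides. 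Once you insert that ingredient, the rest of your outline (smooth convergence off the divisors, identification with $\tilde d$ via the regular part, and $D^\infty_i$ as limits of the zero divisors) matches the paper's proof.
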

\begin{proof}
Using the holomorphic map $F_j$, we can write the following equations:
$$(\sqrt{-1}\partial\bar\partial \,\varphi_j)^n\,=\,e^{-\varphi_i}\,\Pi_{i=1}^l\,|f_i|^{2(\beta_j^i-1)}\,|U|^2,$$
where $f_i$ is the defining function of $D_i$ and $U$ is a nonvanishing holomorphic function. Our assumptions say that for some constant $C$,
$$|\varphi_j|\,\leq \,C, \,\,\,\,\,\omega_{Euc}\,\leq\, C\,\sqrt{-1}\,\partial\bar\partial \,\varphi_j.$$
Because the volume of $D_i$ is bounded with respect to any $\tilde \omega_j$,
for each $i$, there is a limit $D_i^\infty$ of $D_i$. Furthermore, since $\beta_j^i$ tend to $1$, by taking a subsequence if necessary, $\varphi_j$ converge to
a bounded $\varphi_\infty$ such that
$$ C^{-1}\, \omega_{Euc}\,\leq\, \sqrt{-1}\,\partial\bar\partial \,\varphi_\infty\,\le \,C\, \omega_{Euc}.$$
Then, it follows from the standard regularity theory for complex Monge-Ampere equations (see \cite{Bl}) that
the limit $\omega_\infty$ is a smooth K\"ahler-Einstein metric.
\end{proof}

\begin{proof}[Proof of Proposition \ref{lr}]
Choose $\delta$ to be $\delta_1$ in Lemma \ref{induc}. If $\bar A\leq \kappa$, the proposition follows from Lemma \ref{induc} and Anderson's result.
Now we assume that the proposition holds for $\bar A\leq 2^m \kappa$, we want to prove it still holds for $\bar A\leq 2^{m+1} \kappa$.
Denote
$$Z^j_s\,=\,\left \{q\in \tilde B_{\frac{1}{2}}(p_j)\,|\, V(q,s)\,\geq\, \frac{\bar A}{2}\right \}$$
and
$$W_s^\infty\,= \,\lim_{j\rightarrow\infty}Z^j_s\,\,\,\,\,{\rm and}\,\,\,\,W^\infty\,=\,\bigcap_{s\leq 1}\,W_s^\infty.$$
It follows from a standard covering argument that $m(W^\infty)\leq 2^{2n}$.
By induction, $\omega_\infty$ is smooth in $\tilde B_{\frac{1}{2}}(p_\infty)\setminus W^\infty$ and $D_i^\infty$ ($1\leq i\leq l$) are divisors.
Now by Proposition \ref{map}, there is a holomorphic map $F_j$ from
$\tilde B_\rho(p_j)$ into $\mathbb{C}^n$ needed in Lemma \ref{chart}, consequently, $\omega_\infty$ is smooth and $D_i^\infty $($1\leq i\leq l$)
are divisors in $\tilde B_{\frac{\rho}{2}}(p_\infty)$. The proposition is proved.
\end{proof}

As a direct consequence of Proposition \ref{lr}, we have the following regularity for the limit space $(X,d)$.
\begin{lem}\label{smtangent}
The regular part $\mathcal{R}$ is open, and the distance function $d$ is induced by a smooth metric $\omega_\infty$ on $\mathcal{R}$ and $D_i^\infty$ are closures of divisors in $\mathcal{R}$. Moreover for any compact set $K\subset\mathcal{R}\setminus \bigcup_{i=1}^l D_i^\infty$, there are $K_j\subset (M,\omega_j)$, such that $(K_j,\omega_j)$ converges to $(K,\omega_\infty)$ in the smooth topology.
\end{lem}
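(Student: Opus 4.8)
The plan is to obtain Lemma \ref{smtangent} by globalizing the local regularity statement of Proposition \ref{lr} over the whole limit space $(X,d)$. The only input I need beyond Proposition \ref{lr} is that every regular point satisfies, at all sufficiently small scales, the almost-Euclidean volume hypothesis of that proposition. So first I would fix $x\in\mathcal{R}$. By definition some tangent cone $C_x$ at $x$ is isometric to $\RR^{2n}$, which has volume ratio exactly $V_{2n}$ at every scale. Since $(X,d)$ is a non-collapsed limit of metrics with a uniform Ricci lower bound, Bishop--Gromov monotonicity (in its curvature-corrected form) applies, and the renormalized volume ratio $\mathrm{vol}(B_r(x,d))/(V_{2n}r^{2n})$ is monotone up to a factor tending to $1$ as $r\to 0$; its limit equals the volume density of the tangent cone, which is $1$. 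Hence for the constant $\delta=\delta(n,\bar\beta)$ furnished by Proposition \ref{lr}, there is $r_0=r_0(x)>0$ with $\mathrm{vol}(B_r(x,d))\ge (1-\delta)V_{2n}r^{2n}$ for all $r\le r_0$.

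Applying Proposition \ref{lr} with such an $r$ then shows that $B_{\rho r}(x,d)$ is biholomorphic to an open subset of $\CC^n$, that $d$ on this ball is induced by a smooth K\"ahler--Einstein metric, and that the $D_i^\infty$ ($1\le i\le l$) are genuine divisors there. Because the cone angles $\beta_j^i$ along $D_i$ tend to $1$ for $i\le l$, the limiting metric is smooth \emph{across} these divisors, so every point of $B_{\rho r}(x,d)$---including those on $D_i^\infty$---has tangent cone $\RR^{2n}$ and therefore lies in $\mathcal{R}$. This proves that $\mathcal{R}$ is open. The limits $D_i^\infty$ for $i>l$, where the angles stay bounded away from $1$, produce genuinely conical, hence singular, points and so belong to $\mathcal{S}$; this is why only the first $l$ divisors appear in the statement.

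Next I would patch the local data. The charts from Proposition \ref{lr} give $\mathcal{R}$ the structure of a complex manifold, since on overlaps the two biholomorphisms differ by a holomorphic transition, both realizing the same germ of the limit. The local smooth K\"ahler--Einstein metrics all induce the single distance $d$, and a smooth Riemannian metric is determined by its distance function, so they agree on overlaps and glue to a global smooth K\"ahler--Einstein metric $\omega_\infty$ on $\mathcal{R}$ inducing $d$. Likewise each $D_i^\infty$ is a codimension-one analytic subset in every chart, and the pieces agree on overlaps, so $D_i^\infty\cap\mathcal{R}$ is a divisor whose closure in $\mathcal{R}$ is the object named in the statement.

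Finally, for the smooth convergence, let $K\subset\mathcal{R}\setminus\bigcup_{i=1}^l D_i^\infty$ be compact and cover it by finitely many balls $B_{\rho r_m}(x_m,d)$ as above. On each such ball, away from the divisors and the lower-dimensional singular set, the metrics $\omega_j$ have uniformly bounded geometry and almost-Euclidean volume, so Anderson's theorem---exactly as used after Proposition \ref{map} and in Lemmas \ref{angle} and \ref{beta1}---gives smooth convergence $\omega_j\to\omega_\infty$; composing with the Gromov--Hausdorff almost-isometries produces the required $K_j\subset(M,\omega_j)$ with $(K_j,\omega_j)\to(K,\omega_\infty)$ smoothly. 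The removal of $\bigcup_{i=1}^l D_i^\infty$ is essential: although $\omega_\infty$ is smooth there, each $\omega_j$ still carries a conic singularity along $D_i$ with angle only close to $2\pi$, so convergence cannot be smooth up to $D_i^\infty$. I expect the only genuine subtlety to be the verification that a regular point has volume density exactly $1$---so that Proposition \ref{lr} becomes applicable---together with the bookkeeping that the two classes of divisors, $i\le l$ and $i>l$, play opposite roles (smooth limit versus conical limit); once these are in place the statement follows formally from Proposition \ref{lr} and Anderson's convergence.
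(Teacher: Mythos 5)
Your proposal is correct and follows exactly the route the paper intends: the paper states this lemma as a ``direct consequence'' of Proposition \ref{lr}, and your argument supplies precisely the standard missing steps (volume density one at regular points via Colding's volume convergence and almost-monotonicity, so the hypothesis of Proposition \ref{lr} holds at small scales; then openness, patching of the local K\"ahler--Einstein charts, and Anderson-type smooth convergence away from all divisors). The two bookkeeping points you flag --- that the divisors with $i\le l$ become smooth points of the limit while those with $i>l$ have density at most $\bar\beta<1$ and hence lie in $\mathcal{S}$ --- are indeed the only non-formal ingredients, and you handle both correctly.
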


Similarly for the tangent cones, we have
\begin{lem}\label{smtangent2}
Let $C_x$ be any tangent cone of $(X,d)$ at $x$ given by
$$(C_x,d_x)\,=\,\lim_{j\to \infty}(X,r_j^{-1}\,d,x),$$
where $\{r_j\}$ is a sequence with $\lim r_j=0$.
Then the regular part $\mathcal{R}(C_x)$ is open, and the distance $d_x$ is induced by a smooth metric $\omega_x$ on $\mathcal{R}(C_x)$. Also, $(X,r_j^{-1}\,d,x)$ converges smoothly to $(C_x,d_x)$ over $\mathcal{R}(C_x)$.
\end{lem}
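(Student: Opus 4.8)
The plan is to reduce Lemma \ref{smtangent2} to the local regularity already established in Proposition \ref{lr}, exactly as Lemma \ref{smtangent} was deduced from it, the only new ingredient being a diagonal argument that exhibits the tangent cone as a pointed Gromov-Hausdorff limit of rescaled conic K\"ahler-Einstein metrics rather than of the fixed sequence $\omega_j$. Since $(X,d)$ is the Gromov-Hausdorff limit of $(M,\omega_m,D)$ and $C_x=\lim_j(X,r_j^{-1}d,x)$, a standard diagonal argument produces indices $m(j)\to\infty$ and points $p_{m(j)}\to x$ such that the rescaled metrics $\hat\omega_j=r_j^{-2}\omega_{m(j)}$ satisfy
$$(M,\hat\omega_j,p_{m(j)})\longrightarrow(C_x,d_x,o)$$
in the pointed Gromov-Hausdorff topology, where $o$ is the vertex. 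Each $\hat\omega_j$ is again conic K\"ahler-Einstein, now solving ${\rm Ric}(\hat\omega_j)=\hat t_j\,\hat\omega_j+2\pi\sum_i(1-\beta^i_{m(j)})D_i$ with $\hat t_j=r_j^2\,t_{m(j)}\to 0$; in particular its Ricci curvature stays bounded below (nonnegative in the limit) and the surviving cone angles still tend to $2\pi$. The essential point is that the constants $\delta=\delta(n,\bar\beta)$ and $\rho=\rho(n)$ in Propositions \ref{map} and \ref{lr} depend only on $n$ and $\bar\beta$, not on the Einstein constant, so the local regularity machinery applies verbatim to the family $\{\hat\omega_j\}$ even though $\hat t_j$ degenerates.

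Next I would run the volume-density argument. A point $y\in\mathcal{R}(C_x)$ is by definition one whose tangent cone is $\RR^{2n}$, so by Colding's volume convergence the balls $B_s(y,d_x)$ have volume density tending to $V_{2n}$ as $s\to 0$; fix $s$ with ${\rm vol}(B_s(y,d_x))\ge(1-\delta)V_{2n}s^{2n}$. Through the diagonal identification and the continuity of volume under Gromov-Hausdorff limits with Ricci bounded below, $B_s(y,d_x)$ is the limit of balls $B_s(y_j,\hat\omega_j)$ of almost Euclidean volume, where only the divisors with angle approaching $2\pi$ can meet these balls, the others being excluded by the volume gap exactly as in the paragraph preceding Proposition \ref{lr}. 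Applying Proposition \ref{lr} to the sequence $\{\hat\omega_j\}$ then shows that $B_{\rho s}(y,d_x)$ is biholomorphic to an open set in $\CC^n$ and carries a smooth K\"ahler metric $\omega_x$ inducing $d_x$. This simultaneously yields the openness of $\mathcal{R}(C_x)$ and the smoothness of $\omega_x$ there.

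Finally, the smooth convergence $(X,r_j^{-1}d,x)\to(C_x,d_x)$ over $\mathcal{R}(C_x)$ follows once the regular part is known to be smooth: combining the smooth convergence of $\hat\omega_j$ away from the singular set (Anderson's $\epsilon$-regularity, as used in Lemmas \ref{angle} and \ref{beta1}) with the smooth convergence $\omega_m\to\omega_\infty$ on $\mathcal{R}$ and the diagonal choice of $m(j)$ upgrades the Gromov-Hausdorff convergence to smooth convergence on compact subsets of $\mathcal{R}(C_x)$. I expect the main obstacle to be the bookkeeping in the diagonal argument, namely verifying that the rescaled metrics $\hat\omega_j$ genuinely fall under the hypotheses used in the proofs of Propositions \ref{map} and \ref{lr} despite the degeneration $\hat t_j\to 0$; once one checks that those proofs use only the Ricci lower bound, the non-collapsing, and the scale-invariant form of the partial $C^0$-estimate, the remainder is a faithful repetition of the deduction of Lemma \ref{smtangent}.
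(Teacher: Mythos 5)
Your proposal is correct and is essentially the argument the paper intends: the paper states Lemma \ref{smtangent2} with no written proof, prefacing it only with ``Similarly for the tangent cones,'' and the implicit argument is exactly your reduction to Proposition \ref{lr} via a diagonal sequence of rescaled conic K\"ahler--Einstein metrics $r_j^{-2}\omega_{m(j)}$, using that the constants in Propositions \ref{map} and \ref{lr} depend only on $n$ and $\bar\beta$ and not on the Einstein constant. Your treatment of the degenerating Einstein constant $\hat t_j\to 0$ and of the volume-gap exclusion of the divisors with angle bounded away from $2\pi$ matches the setup preceding Proposition \ref{lr}, so no further changes are needed.
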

The basic strategy to prove the partial $C^0$ estimate is to pull back a peak section from a tangent cone and then perturb it to a holomorphic section by solving the $\bar\partial$-equation.

On the regular part $\mathcal{R}$ of the limit $(X,d)$, we have a limit line bundle $L_{\mathcal{R}}$. On the regular part $\mathcal{R}(C_x)$ of any tangent cone $C_x$, we have
$$\omega_x\,=\,\sqrt{-1}\,\partial\bar\partial\, \rho_x^2,$$
where $\rho_x$ is the distance to the vertex of $C_x$. Let $L_0$ be the trivial line bundle on $C_x$ equipped with the hermitian metric $e^{-{\rho_x^2}}$.
We have the following lemma (Lemma 5.7 in \cite{Ti15}).
\begin{lem}\label{trivial}
Let $C_x=\lim_{a\to\infty}(X,\sqrt{\ell_a}\,d,x)$ for $x\in (X,d)$, where $\ell_a\in \ZZ$ and $\ell_a\to \infty$, and
$K$ be any compact subset $\mathcal{R}(C_x)$. Then for any $\eta>0$, there exist positive integers $N=N(K,\eta)$ and $a_0$ such that for $a\ge a_0$,
there exist $\ell=\ell(j)\le N$ and a sequence of ${K}_a\subseteq (\mathcal{R},\omega_\infty)$
with property: There are diffeomorphisms $\phi_a: K\mapsto K_a$ and bundle homomorphisms $\psi_a$ lifting $\phi_a$ satisfying:
\begin{align}\label{bundle}
\CD
L_0@>{\psi}_a\,>> \,L_{\mathcal{R}}^{\ell\ell_a}\big|_{{K}_a}\\
  @V  VV @V  VV  \\
  K @>{\phi}_a\,>>\,{K}_a,
\endCD
\end{align}
and
$$\lim_{a\to\infty}\,{\phi}_a^*(\ell\ell_a \omega_\infty)\,=\, \omega_x,\,\,\,\,| D \psi_a|_{\omega_x}\,\le\, \eta   \,\,\,{\rm in}\,\,K,$$
where $|\cdot|_{\omega_x}$ is defined by $\omega_\infty$, $\omega_x$ and corresponding hermitian metrics on $L_0, L_{\mathcal R}$.
\end{lem}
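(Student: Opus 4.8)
The plan is to transport the geometry of the tangent cone onto the regular part $\mathcal{R}$ of $X$ using the smooth convergence of Lemma~\ref{smtangent2}, and then to promote the resulting diffeomorphism to an approximate identification of hermitian line bundles by a holonomy argument modeled on Lemmas~\ref{circle}, \ref{stokes} and \ref{holo}.

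First I would apply Lemma~\ref{smtangent2} to $C_x=\lim_{a\to\infty}(X,\sqrt{\ell_a}\,d,x)$. Since $K\subset\mathcal{R}(C_x)$ is a compact subset of the open regular part, the smooth convergence there furnishes, for all large $a$, compact sets $K_a\subset\mathcal{R}$ and diffeomorphisms $\phi_a:K\to K_a$ along which the rescaled K\"ahler forms converge smoothly on $K$. Because $L$ is only a $\mathbb{Q}$-line bundle, I would then introduce an auxiliary integer $\ell$, bounded by some $N=N(K,\eta)$, so that $L^{\ell}$ is a genuine line bundle with a hermitian metric of curvature $\omega_\infty$ and so that the total rescaling $\ell\ell_a$ matches the normalization of the cone metric; passing to a subsequence on which $\ell$ is constant, this yields $\phi_a^*(\ell\ell_a\,\omega_\infty)\to\omega_x$ in $C^\infty(K)$, the metric statement in the conclusion. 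Only finitely many residues of $\ell_a$ modulo the index of $L$ occur, which is the source of the bound $N$.

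Next I would construct the bundle homomorphism $\psi_a$. The model bundle $L_0$ on $C_x$ is trivial with hermitian metric $e^{-\rho_x^2}$, so its Chern connection has curvature $\omega_x=\sqrt{-1}\,\partial\bar\partial\,\rho_x^2$, which is globally exact with potential $\rho_x^2$. On the other side, $\phi_a^*L_{\mathcal{R}}^{\ell\ell_a}$ carries the pulled-back hermitian metric, whose Chern connection has curvature $\phi_a^*(\ell\ell_a\,\omega_\infty)\to\omega_x$. Hence the comparison bundle $E_a:=\phi_a^*L_{\mathcal{R}}^{\ell\ell_a}\otimes L_0^\vee$ carries a hermitian connection whose curvature $\phi_a^*(\ell\ell_a\,\omega_\infty)-\omega_x$ tends to $0$ in $C^\infty(K)$; that is, $E_a$ is asymptotically flat on $K$. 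Constructing $\psi_a$ with $|D\psi_a|_{\omega_x}\le\eta$ is then equivalent to producing a norm-one, nearly parallel global section $e$ of $E_a$, for then $\psi_a$ is the induced unitary isomorphism $L_0\to\phi_a^*L_{\mathcal{R}}^{\ell\ell_a}$, which after pushing forward by $\phi_a$ fits into the diagram \eqref{bundle}.

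The main obstacle is precisely this last step, which is the holonomy problem already solved in Lemmas~\ref{circle}, \ref{stokes} and \ref{holo}. The hard part will be to show that the holonomy of $E_a$ around every loop in $K$ is $\eta$-close to the identity, uniformly for $a\ge a_0$. Here I would use that $\mathcal{R}(C_x)$ is the cone over the regular part of the link, so that $K$ has finitely generated fundamental group represented by loops bounding surfaces of bounded area inside $\mathcal{R}(C_x)$; capping off such a loop by a disk and applying Stokes as in Lemma~\ref{stokes} expresses the holonomy of $E_a$ as the exponential of the integral of its curvature over the disk, which tends to $0$ because $E_a$ is nearly flat. This Stokes step is transparent here, in contrast to the general limit space $X$, precisely because $\omega_x$ is exact. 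Once the holonomy is uniformly small, the patching argument of Lemma~\ref{circle} produces the desired norm-one section $e$ with $|De|\le\eta$, completing the construction of $\psi_a$ and the proof.
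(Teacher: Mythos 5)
The paper itself offers no proof of this lemma --- it is quoted as Lemma 5.7 of \cite{Ti15} --- so the comparison is with the standard argument there. Your skeleton is the right one: smooth convergence on the regular part (Lemma \ref{smtangent2}) produces $\phi_a$, the comparison bundle $E_a=\phi_a^*L_{\mathcal R}^{\ell\ell_a}\otimes L_0^\vee$ is asymptotically flat, and a nearly parallel unit section of $E_a$ yields $\psi_a$. But the holonomy step has a genuine gap.

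You assert that every loop in $K$ bounds a surface of bounded area inside $\mathcal{R}(C_x)$ because $\mathcal{R}(C_x)$ is a cone. This is false: a metric cone minus its vertex retracts radially onto its link, so $\pi_1(\mathcal{R}(C_x))$ is essentially $\pi_1$ of the regular part of the link and is typically nontrivial --- already for $C_x=\mathbb{C}_\beta\times\mathbb{C}^{n-1}$ the loop around the singular axis generates $\pi_1\cong\mathbb{Z}$ and bounds nothing in the regular part. For such a loop $\gamma$ Stokes is unavailable, and near-flatness of $E_a$ gives no control: the holonomy of $\phi_a^*L_{\mathcal R}^{\ell_a}$ around $\gamma$ is essentially $\exp\bigl(i\ell_a\int_\Delta\omega_\infty\bigr)$ for a small disk $\Delta$ crossing the singular set, an arbitrary element of $U(1)$ as $a$ varies. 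This is precisely why the exponent $\ell\le N$ appears in the statement: for each $a$ the flat limit of the comparison bundle has holonomy character $\chi_a\in\mathrm{Hom}(H_1(K),U(1))$, which lives in a torus of bounded rank since $H_1(K)$ is finitely generated, and by pigeonhole (Dirichlet simultaneous approximation) there is $\ell=\ell(a)\le N(K,\eta)$ with $\chi_a^{\ell}$ uniformly $\eta$-close to the trivial character; only for that power does $E_a$ admit a unit section with $|De|\le\eta$. Your proposal instead attributes $N$ to the residues of $\ell_a$ modulo the index of the $\mathbb{Q}$-line bundle, which is a separate bookkeeping issue, and your appeal to the patching argument of Lemma \ref{circle} does not transfer either, since its proof uses that the ambient space is an almost-Euclidean ball $B_1^{2n}(0)$ in which every loop avoiding a codimension-two set is homologous to small circles --- there is no such simply connected ambient space here. (A minor further point: to get $\phi_a^*(\ell\ell_a\omega_\infty)\to\omega_x$ rather than $\ell\,\omega_x$ one must compose $\phi_a$ with the dilation by $\ell^{-1/2}$ on $C_x$, using scale-invariance of the cone; this should be said explicitly.)
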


Let ${\mathcal S}_x$ denote the singular set of $C_x$, then we have
\begin{lem}\label{cut-off}
Assume that for any $\bar{\epsilon}>0$, there is a smooth
function $\gamma_{\bar\epsilon}$ on $B_{\bar\epsilon^{-1}}(o,\omega_x)\subset C_x$ satisfying:

\vskip 0.1in
\noindent
1) $\gamma_{\bar\epsilon}\equiv  0$ near ${\mathcal S}_x$ and $\gamma_{\bar\epsilon}(y)\equiv 1$ if ${\rm dist} (y, {\mathcal S}_x) >\bar\epsilon$;

\vskip 0.1in
\noindent
2) $0\le \gamma_{\bar\epsilon}\le 1$ and $|\nabla \gamma_{\bar\epsilon}|\le C = C(\bar\epsilon)$;
\vskip 0.1in
\noindent
3) The integral bound:
$$\int_{  B_{\bar\epsilon^{-1}}(o)} \,|\nabla \gamma_{\bar\epsilon}|^2\,\omega_x^n\,\le \,\bar\epsilon.$$
\vskip 0.1in
\noindent
Then the partial $C^0$ holds at $x$, that is, there exists some large integer $\ell=\ell_x$ and some positive number $c_x$ such that for $j$ sufficiently large and $p\in M$ sufficiently close to $x$,
$$\rho_\ell(M,\omega_j)(p)\,\geq\, c_x.$$
\end{lem}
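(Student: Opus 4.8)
The plan is to use the extremal characterization of the Bergman kernel,
$$\rho_\ell(M,\omega_j)(p)\,=\,\sup\left\{\,\frac{|s|^2_{h^{\otimes\ell}}(p)}{\int_M|s|^2\,\omega_j^n}\,\,:\,\,s\in H^0(M,L^\ell)\setminus\{0\}\,\right\},$$
so that it suffices, for $j$ large, to exhibit a single holomorphic section $s$ of a \emph{fixed} power whose normalized value near $x$ is bounded below independently of $j$. I would obtain such an $s$ by transplanting a model ``peak section'' from the tangent cone and then correcting it. On $C_x$ the trivial holomorphic section $e$ of $L_0$ has $|e|^2=e^{-\rho_x^2}$, a Gaussian concentrated at the vertex. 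Using Lemma \ref{trivial} I would pick $\ell_a\to\infty$ and, for the compact set $K=\{\rho_x\le 2\}\cap\big(\mathcal R(C_x)\setminus T_{\bar\epsilon}(\mathcal S_x)\big)$, transport $\gamma_{\bar\epsilon}e$ through the bundle map $\psi_a$ to a section $\tau_\infty=\psi_a(\gamma_{\bar\epsilon}e)$ of $L_{\mathcal R}^{\ell\ell_a}$ over $K_a\subset(\mathcal R,\omega_\infty)$; the smooth convergence $(M,\omega_j)\to(\mathcal R,\omega_\infty)$ on compacta (Lemma \ref{smtangent}) then gives a smooth section $\tau_j$ of $L^{\ell\ell_a}$ on the corresponding region of $M$, extended by zero to a global section.

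The second step is to estimate $\bar\partial\tau_j$ and correct it. Since $e$ is holomorphic and $\psi_a$ is almost holomorphic, the Leibniz rule yields
$$\bar\partial\tau_\infty\,=\,(\bar\partial\psi_a)(\gamma_{\bar\epsilon}e)\,+\,\psi_a\big((\bar\partial\gamma_{\bar\epsilon})e\big),$$
whose $L^2$ norm is bounded by $C\eta+\sqrt{\bar\epsilon}$ using $|D\psi_a|_{\omega_x}\le\eta$, the bound $|e|\le 1$, and property 3) of the cut-off; passing to $\tau_j$ adds only an $o(1)$ error as $j\to\infty$. Because the Chern curvature of $L_0$ equals $\omega_x$, at the cone scale the underlying bundle is positive with curvature of unit size, so the rescaled form of Lemma \ref{partial-bar} produces $\sigma$ with $\bar\partial\sigma=\bar\partial\tau_j$ and $\|\sigma\|_{L^2}\le C(\eta+\sqrt{\bar\epsilon}+o(1))$. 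Then $s=\tau_j-\sigma$ is a genuine holomorphic section of $L^{\ell\ell_a}$ with $\|s\|_{L^2}$ bounded above by a fixed constant. The divisors $D_i^\infty$ ($i\le l$), whose metric cone angles tend to $2\pi$, lie in the metric regular part and are removable both for the $L^2$ integrals and for the extension of holomorphic sections, so crossing them is harmless, exactly as in Lemma \ref{chart}.

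For the lower bound I would fix a point $p$ in the regular cross-section $\{\rho_x=\tfrac12\}\setminus\mathcal S_x$; since $\mathcal S_x$ has real codimension at least two, such $p$ exists and lies at a fixed positive distance from $\mathcal S_x$. There $|\tau_j|^2(p)\approx e^{-1/4}$, while $\bar\partial\sigma=\bar\partial\tau_j$ is supported in the shrinking tube $\{0<\gamma_{\bar\epsilon}<1\}\subset T_{\bar\epsilon}(\mathcal S_x)$; hence for $\bar\epsilon$ small $\sigma$ is holomorphic on a ball of definite radius about $p$, and the mean value inequality underlying Lemma \ref{gradient} gives $|\sigma|^2(p)\le C\|\sigma\|_{L^2}^2\le C(\eta+\sqrt{\bar\epsilon}+o(1))$. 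Choosing first $\bar\epsilon$, then $\eta=\eta(\bar\epsilon)$, and finally $a$ and $j$ large, I obtain $|s|^2(p)\ge\tfrac12 e^{-1/4}$, hence $\rho_{\ell\ell_a}(p_j)\ge c_x>0$ at the point $p_j\in M$ corresponding to $p$, which is close to $x$ at the original scale. The gradient estimate of Lemma \ref{gradient} then shows $|s|^2$ cannot drop by more than a controlled factor over the scale $(\ell\ell_a)^{-1/2}$, so the bound propagates to all points sufficiently close to $x$, giving the conclusion with $\ell=\ell\ell_a$.

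I expect the main obstacle to be the simultaneous control of the three nested approximations — the cut-off parameter $\bar\epsilon$, the bundle-map error $\eta$, and the two geometric convergences (tangent cone as $a\to\infty$ and Gromov--Hausdorff/smooth convergence as $j\to\infty$) — which must be arranged by a diagonal argument in the order $\bar\epsilon\to\eta(\bar\epsilon)\to a\to j$ so that each error term is dominated before the next limit is taken. A secondary technical point is justifying the $\bar\partial$-solution and the pointwise estimates uniformly up to the conic divisors $D_i$, which I would handle by the same cut-off and integration-by-parts device already used in Lemmas \ref{partial-bar} and \ref{gradient}.
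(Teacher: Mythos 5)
Your proposal is essentially the paper's own argument: transplant the peak section $\gamma_{\bar\epsilon}e$ of $(L_0,e^{-\rho_x^2})$ from the tangent cone via Lemma \ref{trivial} and the smooth convergence of Lemma \ref{smtangent}, correct it by solving $\bar\partial\sigma_j=\bar\partial v_j$ with Lemma \ref{partial-bar}, control $\sigma_j$ pointwise on a region away from $\mathcal S_x$ by a local elliptic (mean-value) estimate, and propagate the lower bound to points near $x$ by the gradient estimate of Lemma \ref{gradient}, with the parameters ordered $\bar\epsilon\to\eta(\bar\epsilon)\to a\to j$.

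Two steps need tightening. First, the final propagation does not close as written: with $\|s\|_{L^2}^2\sim(\ell\ell_a)^{-n}$, Lemma \ref{gradient} gives $|\nabla s|\le C(n,C_S)(\ell\ell_a)^{1/2}$, so moving from your evaluation point on $\{\rho_x=\tfrac12\}$ to the vertex (cone-distance $\tfrac12$) costs up to $C(n,C_S)/2$ in $|s|$, which need not be smaller than $e^{-1/8}$. The paper avoids this by fixing the evaluation region at cone-radius $O(\delta)$ with $\delta\le\frac{1}{8C(n,C_S)}$ chosen \emph{before} $\bar\epsilon$; you should do the same (your ``controlled factor'' is controlled only if the radius is tuned to the a priori constant). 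Second, the crossing of the divisors $D_i^\infty$ ($1\le i\le l$) is not automatic: the smooth convergence used to push $\tau_a$ down to $M$ holds only on compacta of $\mathcal R\setminus\bigcup_{i=1}^l D_i^\infty$ (Lemma \ref{smtangent}), so one must insert an additional cut-off $\tilde\gamma_{\tilde\epsilon}$ supported off these divisors, contributing a further small $\bar\partial$-error; asserting removability ``as in Lemma \ref{chart}'' does not by itself produce the smooth section on $M$. Both repairs are exactly the devices the paper uses and do not change your overall scheme.
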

\begin{proof}
Let $C=C(n,C_s)$ be the constant in (\ref{section}). Choose a small $\delta\leq \frac{1}{8C}$. Put
$$V(x,\delta)\,=\,\{y\in C_x\,|\, \delta\,\leq\, {\rm dist} (y,x)\,\leq\,\delta^{-1},\,{\rm dist} (y, {\mathcal S}_x\,\geq\, \delta\}.$$
We can choose $\delta$ smaller such that $V(x,2\delta)$ is not empty.
Now in $V(x;\delta)$, there exists $C_1$, which may depend on this region, such that if
\begin{align}
\bar{\partial}\sigma\,=\,f,
\end{align}
we have the local estimate for $\sigma$:

\begin{align}\label{elliptic}
|\sigma|_{C^0(V(x;2\delta))}\,\leq\, C_1\,\left(|f|_{C^0(V(x;\delta))}\,+\,\delta^{-2n}\,\int_{V(x;\delta)}\,|\sigma|^2\,\omega_x^n\right).
\end{align}
Choose $\bar\epsilon$ such that $100 C_1\bar\epsilon \leq \delta^{2n+1}$, then by the assumption, we have $\gamma_{\bar\epsilon}$ with property: $\gamma_{\bar\epsilon}\equiv  1$ on $V(x,\delta)$. Let $\zeta$ be a cut-off function $\zeta$ such that $\zeta(t)=0$ for $t\ge 1$ and $\zeta(t)=1$ for $t\le \frac{1}{2}$, then we set
$$\beta(y)\,=\,\zeta(\frac{\bar\epsilon}{d(y,x)})\,\zeta(\bar\epsilon d(y,x))\,\gamma_{\bar\epsilon}(y),\,\,\,\forall y\,\in\, C_x.$$
Then, replacing $\gamma_{\bar\epsilon}$ by $\gamma_{\bar\epsilon/2}$ if necessary, we can have
$$\int_{C_x}\,|\nabla\beta|^2\,e^{-{d^2(x,\cdot)}}\,\omega_x^n\,\leq \,\bar\epsilon.$$

Denote by $K$ the support of $\beta$. By Lemma \ref{trivial}, there are $K_a\subset\mathcal{R}$ converges to $K$ smoothly. Denoting by $e$ the constant section $1$ of $L_0$, then by Lemma \ref{trivial},
we get a smooth section $\tau_a=\psi_a(\beta e)$ of $L_{\mathcal R}^{\ell\ell_a}$ over $\mathcal{R}$. For any $0<\,\eta\leq\,{C_1}^{-1} \delta$, if $a$ is sufficiently large, $\tau_a$ has the following properties:
\begin{align}\label{norm}
\begin{cases}
\,\,\,\|\tau_a\|^2_{C^0(\phi_a(V(x;\delta)\bigcap B_{3\delta}(x)))}\,\geq \,(1-\eta)\,e^{-3\delta^2};&\\
\,\,\,\int_{X}\,|\tau_a|^2\,\omega^n_\infty\,\leq\, (1+\eta)\,\left(\frac{2 \pi}{\ell\ell_a}\right)^n;&\\
\,\,\,|\bar{\partial} \tau_a|\,\leq\,\eta\,\,\,{\rm in}\,\,V(x;\delta);&\\
\,\,\,\int_{X}\,|\bar{\partial}\tau_a|^2 \,\omega^n_\infty\,\leq\, (1+\eta)\,\frac{\bar\epsilon}{(\ell\ell_a)^{n-1}}.
\end{cases}
\end{align}
By Lemma \ref{smtangent}, $D_i^\infty\,(1\leq i\leq l)$ are divisors in $\mathcal{R}$. Then we can choose a suitable cut-off function $\tilde{\gamma}_{\tilde \epsilon}$ supported outside $\bigcup_{i=1}^l D_i^\infty$ such that $\tilde\tau_a=\tau_a\tilde{\gamma}_{\tilde \epsilon}$ satisfies the above estimates with a slightly bigger $\eta$.
By Lemma \ref{smtangent}, there are
$$\phi_j: K_a\,\cap \,supp (\tilde{\gamma}_{\tilde \epsilon})\, \rightarrow \,(M,\omega_j)$$
such that $\phi_j^*\omega_j\rightarrow \omega_\infty$, furthermore, these $\phi_j$ can be lifted to isomorphisms $\psi_j$:
\begin{align}\label{bundle2}
\CD
L_{\mathcal R}^{\ell\ell_a}@>{\psi}_j\,>> \,L^{\ell\ell_a}\\
  @V  VV @V  VV  \\
  K_a\cap \,supp (\tilde{\gamma}_{\tilde \epsilon}) @>{\phi}_j\,>>\,M.
\endCD
\end{align}
Then $v_j=\psi_j(\tilde\tau_a)$ is a smooth section of $L^{\ell\ell_a}$. For $j$ large enough, $v_j$ will also satisfy \eqref{norm}.
By Lemma \ref{partial-bar}, we have a solution of
$$\bar\partial \sigma_j\,=\,\bar\partial v_j,$$satisfying
$$\int _M\,|\sigma_j|^2_{h_j}\,\omega_j^n\, \leq\, (1+\eta)\,\frac{\bar\epsilon}{(\ell\ell_a)^{n}},$$
where $h_j$ is the hermitian metric on $L^{\ell\ell_a}$ induced by $\omega_j$.
Then $s_j=v_j-\sigma_j$ is a holomorphic section of $L^{\ell\ell_a}$.
From (\ref{elliptic}), we have
\begin{align}\label{C0}
|\sigma_j|_{h_j}(q)\,\leq\, C_1\,\left(\eta\,+\,\frac{\bar\epsilon}{\delta^{2n}}\right )\,\leq\, C_1\,\eta+\delta,\,\,\,\forall q\,\in\, (\phi_j\circ \phi_{a})(V(x;2\delta)).
\end{align}
So $$|s_j|_{h_j}(q)\,\geq\, 1\,-\,C_1\,\eta\,-\,\delta\,\geq\, 1\,-\,2\delta.$$
By the gradient estimate, we have
$$|\nabla s_j|^2_{h_j}\,\leq\, C(n,C_s)\,(\ell\ell_a)^{n-1}\,\int_M\,|s|^2\,\omega_j^n\,\leq\, C(n,C_s)\,(1+\eta)\,\frac{1}{\ell\ell_a}.$$
It follows that if $p$ is sufficiently close to $x$,
$$|s_j|_{h_j}(p)\,\geq\, 1\,-\,2 \delta\,-\,3\, C(n,C_s)\,\delta\,\geq\, \frac{1}{2}.$$
The lemma is proved.
\end{proof}
Now we have to show that for each tangent cone $C_x$, there is a cut-off function satisfying the conditions in Lemma \ref{cut-off}.
For the flat cone $\mathbb{C}_\beta\times \mathbb{C}^{n-1}$ with $\beta \le \bar\beta$,\footnote{$\bar\beta$ is the one given in Lemma \ref{beta1}.}
we have the following cut-off function. Let $\bar\delta \in (0,\frac{1}{3})$
and $\zeta$ be a cut-off function on $\mathbb R$ which satisfies
$0\le\zeta\le 1$,  $|\zeta'|\le 1$ and
\begin{align}\label{eta}
\zeta\,=\,\begin{cases}
0,\,\,\, t\,>\, \log(-\log \bar\delta^3) &\\
1,\,\,\, t\,\le\, \log(-\log \bar\delta). &
\end{cases}
\end{align}
We define a cut-off function $\gamma_{\bar\epsilon}$ on $\mathbb{C}_\beta\times \mathbb{C}^{n-1}$  by

\begin{align}
\gamma_{\bar\epsilon}\,=\,
\begin{cases}  1, \,\,\,{\rm if}\,\, \rho(y)\,=\,{\rm dist} (y, S_x) \,\ge\, \bar\epsilon
 &\\
\zeta\left( \log\left(-\log \left(\frac{\rho(y)}{\bar\epsilon}\right)\right)\right),\,\,\,{\rm if}\,\,\rho(y)\,\le \, \bar\epsilon. &
\end{cases}
\end{align}
Clearly, $\gamma_{\bar\epsilon}\equiv 1$ if $\rho(y)\ge \bar\delta \bar\epsilon$ and $\gamma_{\bar\epsilon}\equiv 0$ if $\rho(y)\le \bar\delta^3  \bar\epsilon$. Moreover, we have
$${\rm supp}\,|\nabla  \gamma_{\bar\epsilon} | \,\subset\subset\,\{y\,|\, \bar\delta^3 \bar\epsilon\,\le\,\rho(y)\,\le\,\bar\delta \bar\epsilon\,\}.$$
Thus we have
\begin{align}\label{cone}\int_{B_{\bar\epsilon^{-1}}(o)} \,|\nabla \gamma_{\bar\epsilon}|^2\,\omega_x^n &\le\,
\left(\int_{ \bar\delta^3 \bar\epsilon\,\le\,\rho(y)\,\le\,
\bar\delta \bar\epsilon}\,\left[\rho\left (-\log \left(\frac{\rho}{\bar\epsilon}\right)\right)\right]^{-2}\, \bar\beta\, \rho d\rho \right)\,\left( \int_{B_{\bar\epsilon^{-1}}(o)'}\,\omega_{\rm flat}^{n-1}\right)\notag\\
&\le\, \frac{2\pi a_{n-1} \bar\beta}{\bar\epsilon^{2(n-1)}} \,\int_{\bar\delta^3}^{\bar\delta}\, \frac{dr}{r(\log r)^2}\notag\\
&=\, \frac{2\pi a_{n-1}\bar\beta}{\bar\epsilon^{2(n-1)}\,(-\log\bar\delta)},
\end{align}
where $B_{\bar\epsilon^{-1}}(o)'$ denotes the $\bar\epsilon^{-1}$-ball in $\mathbb C^{n-1}$ centered at the origin and
$$a_{n-1}\,=\,\frac{1}{(n-1)!}{\rm vol}(B_1(o,\omega_\beta)).$$

We also require the smooth convergence in the support of $\gamma_{\bar\epsilon}$. It will be done in the following lemma.

Now we can prove the local version of the partial $C^0$ estimate.
\begin{lem}\label{limitdivisor}
Assume that for some sequence $\ell_a \to \infty$,
$$C_x\,=\,(C_x,d_x)\,=\,\lim_{a\to \infty}\,(X, \sqrt{\ell_a}\, d,x)\,=\,\mathbb{C}_\beta\times\mathbb{C}^{n-1}.$$
For any $\delta>0$, there exists $r_x>0$ and a Lipschitz map $F_\infty$:
$$F_\infty: B_1(x, {r_x}^{-1} d)\rightarrow B_2(o,\omega_\beta)$$
which is a $\delta$-Gromov-Hausdorff approximation. Moreover,
$F_\infty(B_1(x,r_x^{-1} d)\cap \mathcal{S})$ is a divisor.
\end{lem}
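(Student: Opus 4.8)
The plan is to carry out the peak-section construction of Lemma~\ref{cut-off} at the point $x$ and upgrade it to an explicit holomorphic map, then to track the zero locus of the coordinate section along the cone factor $\mathbb{C}_\beta$. Since $C_x=\mathbb{C}_\beta\times\mathbb{C}^{n-1}$ is a flat product cone with $\beta\le\bar\beta$, the cut-off function built in \eqref{cone} satisfies conditions 1)--3) of Lemma~\ref{cut-off}, so the hypotheses of that lemma hold at $x$. Its proof then supplies, for a suitable multiple $\ell=\ell_x$ and the scale $r_x=\ell_a^{-1/2}$ with $a$ large, a holomorphic peak section $s_0^j$ of $L^{\ell\ell_a}$ near $x$ with $|s_0^j|_{h_j}\ge\frac12$, obtained by transplanting the trivial section of $L_0$ through Lemma~\ref{trivial} and correcting it with the $\bar\partial$-estimate of Lemma~\ref{partial-bar}. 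Repeating this construction verbatim, but transplanting the holomorphic coordinate functions $z_1,\dots,z_n$ of $\mathbb{C}_\beta\times\mathbb{C}^{n-1}$ (in the form $\gamma_{\bar\epsilon}z^\alpha$) in place of the trivial section, produces holomorphic sections $s_1^j,\dots,s_n^j$ of the same power; here $z_1$ is the coordinate on the cone factor $\mathbb{C}_\beta$, and constructing the section attached to it, whose growth is governed by $\beta<1$ rather than being linear, is the one place where the conic geometry enters in an essential way.

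With the sections in hand, I would set $F_j=(s_1^j/s_0^j,\dots,s_n^j/s_0^j)$ on the locus $\{s_0^j\ne0\}$, which contains a fixed ball around $x$. The gradient estimate \eqref{section} of Lemma~\ref{gradient} bounds $|\nabla F_j|$ uniformly, so the $F_j$ are uniformly Lipschitz; combined with the $L^2$-control of the $\bar\partial$-corrections and the smooth convergence on the regular part (Lemma~\ref{smtangent}), the ratios $s_i^j/s_0^j$ are $C^0$-close, away from the cone edge, to the coordinates pulled back from $\mathbb{C}_\beta\times\mathbb{C}^{n-1}$. Exactly as in the construction of the map in Proposition~\ref{map}, this shows that for $j$ large $F_j$ is a $\delta$-Gromov--Hausdorff approximation onto its image. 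Passing to a subsequential limit in $j$ yields the Lipschitz map $F_\infty\colon B_1(x,r_x^{-1}d)\to B_2(o,\omega_\beta)$ with the required approximation property, holomorphic on the regular part by Lemma~\ref{smtangent}.

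For the divisor statement I would analyze the zero set of the section $s_1^j$ attached to the cone coordinate $z_1$: on $C_x$ the hypersurface $\{z_1=0\}$ is precisely the singular edge $\{o\}\times\mathbb{C}^{n-1}=\mathcal{S}_x$, and $s_1^j$ is built to vanish there, so $F_\infty$ carries $B_1(x,r_x^{-1}d)\cap\mathcal{S}$ into $\{z_1=0\}$. To see that the image is an honest divisor rather than merely a closed set of the right dimension, I would pass the conic divisor $D_i$ producing the angle $\beta$ (one of the indices $l+1\le i\le k$) to its limit as a positive closed current; its mass is controlled cohomologically, so it converges to a limit cycle whose support is $F_\infty^{-1}(\{z_1=0\})$, and matching this with the zero divisors $\{s_1^j=0\}$ identifies $F_\infty(\mathcal{S}\cap B_1)$ with $\{z_1=0\}\cap B_2(o,\omega_\beta)$, a divisor. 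I expect this final identification to be the main obstacle: one must show that the Gromov--Hausdorff image of the singular set is genuinely cut out holomorphically, i.e. that the current limit of the $D_i$ and of $\{s_1^j=0\}$ is the integration current over a complex divisor and that no drop of dimension occurs across the limit.
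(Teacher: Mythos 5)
Your proposal follows essentially the same route as the paper: the peak section from Lemma \ref{cut-off} together with $\bar\partial$-corrected transplants of the coordinate functions gives $F_j=(s_j^1/s_j,\dots,s_j^n/s_j)$, the gradient estimate of Lemma \ref{gradient} makes it uniformly Lipschitz and hence a $\delta$-Gromov--Hausdorff approximation on the whole ball once it is one on $V(x,2\delta)$, and the divisor statement is obtained by passing the holomorphically pushed-forward divisors $F_j(D_i\cap B_1)$, $l+1\le i\le k$, to the limit. The one caveat is your closing identification of $F_\infty(B_1\cap\mathcal{S})$ with $\{z_1=0\}$ exactly: at the fixed scale $r_x$ the singular set is only Gromov--Hausdorff close to the cone edge, so one can (and the paper does) conclude only that the image is \emph{a} divisor, which is all the lemma asserts.
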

\begin{proof}
Let $p_j\in M$ converge to $x$ in the Gromov-Hausdorff topology. By Lemma \ref{cut-off}, for small $\delta>0$, there is an integer $\ell=\ell_x$ such that for $j$ large enough, we have holomorphic section $s_j$ satisfying
$$|s_j|_{h_j}(p_j)\,\geq\, 1-\delta, \,\,\,\,|\nabla s_j|^2_{h_j}\,\leq\, C(n,C_s)\,(1+\eta)\,\frac{1}{\ell\ell_a}.$$
Putting $r_x=\frac{1}{2 C(n,C_S)\,\ell_x}$, we have
$$|s_j|_{h_j}(q)\,\geq\, \frac{1}{2}\,-\,\delta\,\,\,\forall q\in B_1(p_j,r_x^{-1} \omega_j).$$
Now from smooth sections $\psi_j(\psi_a(z_\alpha))$ ($\alpha=1,\cdots,n$),
we can solve
$$\bar\partial \sigma_j^\alpha\,=\,\bar\partial \psi_j(\psi_a(\gamma_{\bar\epsilon}\, z_\alpha\, e)).$$
Putting $s_j^\alpha=\phi_j(\psi_a(\gamma_{\bar\epsilon}\,z_\alpha\,e))-\sigma_j^\alpha$, we define
$$F_j\,=\,\left(\frac{s_j^1}{s_j},...,\frac{s_j^n}{s_j}\right).$$
With respect to $r_x^{-1}\,\omega_j$, we have
\begin{align}\label{Lip}
|\nabla F_j|_{r_x^{-1}\,\omega_j}\,\leq\, C(n,C_S).
\end{align}
Similar to (\ref{C0}), for any $q\in (\phi_{j}\circ \phi_{a})(V(x;2\delta))$ and $\eta, \bar\epsilon$ sufficiently small, we have
\begin{align}\label{close}
\left |\frac{s_j^1}{s_j}\,-\,z_\alpha\circ(\phi_{j}\circ \phi_{a})^{-1}\right|(q)\,\leq \,C_1(\eta\,+\,\frac{\bar\epsilon}{\delta^{2n}})\,\leq\, 2\delta.
\end{align}
This says that $F_j$ is a $2\delta$-Gromov-Hausdorff approximation over $V(x,2\delta)$. By (\ref{Lip}), we know that $F_j$ is a $6C(n,C_s)\delta$-Gromov-Hausdorff approximation over $B_1(p_j,r_x^{-1}\,\omega_j)$ since $T_{2\delta}(V(x,2\delta))\supseteq B_2(o,\omega_\beta)$.
Now $\bigcup_{i=l+1}^k F_j(D_i\bigcap B_1(p_j,r_x^{-1}\,\omega_j))$ are divisors in $B_2(o,\omega_\beta)$ and
converges to $F_\infty(B_1(x, r_x^{-1}\,d)\bigcap \mathcal{S})$, where $F_\infty$ is the limit of $F_j$. Then the lemma is proved.
\end{proof}
Similarly, for the iterated tangent, we have
\begin{lem}\label{local}
Let $C_x\,=\,\lim_{a\to \infty}\,(X, \sqrt{\ell_a}\, d,x)$ be a tangent cone. For any $y\in \mathcal{S}_{2n-2}(C_x)\setminus \mathcal{S}_{2n-4}(C_x)$ and any $\delta>0$,
there is a small $s_y >0$ and a Lipschitz map $F_\infty$ from $B_1(x,s_y^{-2}\,\omega_x)$ to $B_2(o, \omega_\beta)$ which is a $\delta$-Gromov-Hausdorff approximation.
Moreover, $F_\infty(B_1(y, s_y^{-2}\,\omega_x)\bigcap \mathcal S_x)$ is a divisor.
\end{lem}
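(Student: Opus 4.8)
The plan is to mirror the proof of Lemma \ref{limitdivisor}, carrying the same construction out one level deeper, inside the tangent cone $C_x$ in place of the limit space $(X,d)$ itself. First I would pin down the relevant model at $y$. Since $y\in\mathcal S_{2n-2}(C_x)\setminus\mathcal S_{2n-4}(C_x)$, the stratification tells us that at least one tangent cone of $C_x$ at $y$ splits off a factor $\mathbb{C}^{n-1}$ while none splits off more; the complementary two-real-dimensional factor is a metric cone over a circle, hence a flat cone $\mathbb{C}_\beta$ with $\beta\le\bar\beta$. Thus for a suitable scale $s_y\to0$ one has
$$\lim_{s_y\to0}\,(C_x,\,s_y^{-2}\,\omega_x,\,y)\,=\,\mathbb{C}_\beta\times\mathbb{C}^{n-1},$$
and this flat cone is the model into which $F_\infty$ will map.

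Next I would set up the section machinery at the tangent-cone level. By Lemma \ref{smtangent2} the regular part $\mathcal R(C_x)$ is smooth, $d_x$ is induced by $\omega_x$ there, and $(X,r_j^{-1}d,x)$ converges smoothly to $(C_x,d_x)$ over $\mathcal R(C_x)$; combined with the smooth convergence of $(M,\omega_j)$ over $\mathcal R$ from Lemma \ref{smtangent}, a diagonal argument realizes $\mathbb{C}_\beta\times\mathbb{C}^{n-1}$ as the smooth limit of rescalings of $(M,\omega_j)$ based at points $p_j\to y$, together with compatible bundle homomorphisms $\psi_j$ as in \eqref{bundle2}. On the flat cone I would use the explicit cut-off function $\gamma_{\bar\epsilon}$ constructed earlier, whose Dirichlet energy is controlled by \eqref{cone}, i.e. is $O((-\log\bar\delta)^{-1})$ and hence can be made as small as desired.

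With these in hand, the construction of Lemma \ref{cut-off} and Lemma \ref{limitdivisor} runs essentially verbatim. Starting from the approximate sections $\psi_j(\psi_a(\gamma_{\bar\epsilon}\,e))$ and $\psi_j(\psi_a(\gamma_{\bar\epsilon}\,z_\alpha\,e))$ of $L^{\ell\ell_a}$, the peak estimates \eqref{norm} hold, and solving $\bar\partial\sigma_j=\bar\partial v_j$ by Lemma \ref{partial-bar} corrects them to holomorphic sections $s_j,s_j^\alpha$ with $C^0$-small error, the smallness coming precisely from the energy bound \eqref{cone}. Choosing $s_y$ as $r_x$ was chosen in Lemma \ref{limitdivisor} and setting
$$F_j\,=\,\left(\frac{s_j^1}{s_j},\cdots,\frac{s_j^n}{s_j}\right),$$
the gradient bound \eqref{Lip} and the $C^0$-closeness \eqref{close} force the limit $F_\infty$ to be a $\delta$-Gromov-Hausdorff approximation onto $B_2(o,\omega_\beta)$. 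Finally, since the singular set $\mathcal S_x$ near $y$ corresponds to the axis $\{0\}\times\mathbb{C}^{n-1}$ of the model, the images of the divisors concentrate there and $F_\infty(B_1(y,s_y^{-2}\,\omega_x)\cap\mathcal S_x)$ is a divisor, exactly as in Lemma \ref{limitdivisor}.

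The step I expect to be the main obstacle is the passage to this iterated tangent cone: one must actually produce the approximating sequence $(M,\omega_j)$ and the compatible bundle maps at the \emph{second} level of rescaling, which requires the smooth convergence of Lemma \ref{smtangent2} together with a careful diagonal choice of the scales $\ell_a$, $s_y$ and of the base points $p_j$. Once this diagonal approximation is secured, the peak-section and $\bar\partial$-correction estimates are identical to those already established in Lemma \ref{cut-off} and Lemma \ref{limitdivisor}.
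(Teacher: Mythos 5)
Your proposal is correct and follows the same route as the paper: the paper's own proof is just the observation that the iterated tangent cone $(C_x)_y$ is isomorphic to $\mathbb{C}_\beta\times\mathbb{C}^{n-1}$ followed by the instruction to repeat the arguments of Lemma \ref{limitdivisor} on $(C_x)_y$. Your write-up is simply a more detailed unpacking of that two-sentence argument, including the diagonal rescaling step that the paper leaves implicit.
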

\begin{proof}
Since $y\in \mathcal{S}_{2n-2}(C_x)\setminus \mathcal{S}_{2n-4}(C_x)$, the iterated tangent cone $(C_x)_y$ at $y$ is isomorphic to $\mathbb{C}_\beta\times \mathbb{C}^{n-1}$. So we can do the same arguments on $(C_x)_y$ as we did
in the proof of Lemma \ref{limitdivisor} to get the conclusion.
\end{proof}

\begin{prop}\label{partial}
There exists a positive integer $\ell$ and a positive number $c_\ell$ so that the partial $C^0$-estimate holds, i.e. $\rho_\ell(M,\omega_j)\geq c_\ell.$
\end{prop}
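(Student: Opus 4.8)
The plan is to reduce the global estimate to the local criterion of Lemma \ref{cut-off}. Once we verify, at every point $x\in X$ and for a tangent cone $C_x=\lim_{a}(X,\sqrt{\ell_a}\,d,x)$ as in Lemma \ref{trivial}, the existence of cut-off functions $\gamma_{\bar\epsilon}$ on $B_{\bar\epsilon^{-1}}(o,\omega_x)$ satisfying conditions 1)--3) there, Lemma \ref{cut-off} produces an integer $\ell_x$ and a constant $c_x>0$ with $\rho_{\ell_x}(M,\omega_j)(p)\ge c_x$ for all large $j$ and all $p\in M$ close to $x$. Since $X$ is a Gromov--Hausdorff limit of compact spaces of diameter $\le \tfrac{\pi}{\sqrt{\mu}}$, it is compact, so I would cover it by finitely many such neighborhoods $U_{x_1},\dots,U_{x_m}$. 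Taking $\ell$ to be a common multiple of $\ell_{x_1},\dots,\ell_{x_m}$ and replacing each unit-norm peak section $s$ of $L^{\ell_{x_i}}$ by its power $s^{\ell/\ell_{x_i}}$, whose $L^2$-norm is controlled through the sup bound of Lemma \ref{gradient}, I would pass from the varying $\ell_{x_i}$ to a single $\ell$ and obtain a uniform lower bound $\rho_\ell(M,\omega_j)\ge c_\ell$ on all of $M$. Thus the whole proposition rests on producing the cut-off functions on every tangent cone.

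The construction of $\gamma_{\bar\epsilon}$ on $C_x$ exploits the stratification $\mathcal S_x=\bigcup_{k\ge 1}\mathcal S_{2n-2k}(C_x)$, split into the codimension-two part $\mathcal S_{2n-2}(C_x)$ and the remainder $\Sigma=\bigcup_{k\ge 2}\mathcal S_{2n-2k}(C_x)$, whose closure has Hausdorff dimension at most $2n-4$. For $\Sigma$ I would use a capacity argument: a closed set of real codimension strictly greater than two has vanishing $2$-capacity, hence admits cut-offs equal to $0$ near $\Sigma$, to $1$ away from it, with arbitrarily small $L^2$ gradient energy. For the codimension-two part, by Lemma \ref{local} the iterated tangent cone at any $y\in\mathcal S_{2n-2}(C_x)\setminus\mathcal S_{2n-4}(C_x)$ is the flat product $\mathbb{C}_\beta\times\mathbb{C}^{n-1}$ with $\beta\le\bar\beta<1$; covering the compact region $B_{\bar\epsilon^{-1}}(o)\cap\bigl(\mathcal S_{2n-2}(C_x)\setminus T_\sigma(\Sigma)\bigr)$ by finitely many balls carrying this model and gluing copies of the explicit logarithmic cut-off, I would invoke the computation \eqref{cone}, whose gradient energy is bounded by $2\pi a_{n-1}\bar\beta\,\bar\epsilon^{-2(n-1)}(-\log\bar\delta)^{-1}$ and is therefore made smaller than $\bar\epsilon$ by choosing $\bar\delta$ small. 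Multiplying the capacity cut-off for $\Sigma$ by the product of the local divisor cut-offs yields $\gamma_{\bar\epsilon}$ meeting 1)--3), with support inside the regular part, where by Lemma \ref{smtangent2} the rescaled metrics converge smoothly, so that the section transplantation of Lemma \ref{cut-off} applies. Because the flat model is secured only after passing to the iterated tangent cone, the argument is genuinely inductive on the depth of the stratification (equivalently, on dimension), with base cases the regular cone $\mathbb{R}^{2n}$, needing no cut-off, and $\mathbb{C}_\beta\times\mathbb{C}^{n-1}$, handled by \eqref{cone}; Lemmas \ref{limitdivisor} and \ref{local} feed the divisor structure into the induction.

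I expect the main obstacle to be precisely the codimension-two stratum. In the smooth Kähler--Einstein setting all cone angles degenerate to $2\pi$, so $\mathcal S_{2n-2}$ also has vanishing $2$-capacity and a single capacity cut-off suffices; here the angles $\beta$ stay bounded away from $1$, the codimension-two locus is a genuine conical divisor of positive $2$-capacity, and one is forced to use the delicate logarithmic cut-off \eqref{cone}, to cover the divisor locus uniformly by flat models via Lemma \ref{local}, and to keep this construction compatible both with the smooth convergence on the support of $\gamma_{\bar\epsilon}$ and with the $\bar\partial$-estimate of Lemma \ref{partial-bar} used inside Lemma \ref{cut-off}.
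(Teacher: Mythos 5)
Your proposal is correct and follows essentially the same route as the paper: reduce to constructing, on each tangent cone, a cut-off function satisfying the hypotheses of Lemma \ref{cut-off}, then split the singular set $\mathcal S_x$ into the deeper strata (handled by a covering/capacity argument giving small gradient energy, since their Hausdorff dimension is at most $2n-4$) and the codimension-two stratum, which is covered by finitely many balls where Lemma \ref{local} provides the local divisor structure and the explicit logarithmic cut-off of \eqref{cone} applies, the pieces being glued by a partition of unity. The paper implements exactly this decomposition via the function $\gamma_{\bar\epsilon}=(1-\chi)\bigl(1-\sum_b\xi_b+\sum_b\xi_b\gamma_{\bar\epsilon,b}\bigr)$, so your argument matches it in all essentials.
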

\begin{proof}
By choosing a subsequence, we can assume that $(M,\omega_j)$ converges to a metric space $(X,d)$. To prove the Proposition, we only need to
prove that for any $x\in X$ and $p_j\rightarrow x$, there exists a large integer $\ell=\ell_x$ and a positive number $c_x>0$ satisfying:
$$\rho_{\ell}(M,\omega_j)(x)\,\geq\, c_x.$$
It suffices to construct the required cut-off function on every tangent cone.

At first, by Lemma \ref{local}, for any $y\in \mathcal{S}_{2n-2}(C_x)\setminus \mathcal{S}_{2n-4}(C_x)$, we have a ball $B_{6 s_y}(y)$ with respect to $\omega_x$, such that $\mathcal{S}_x\bigcap B_{6 s_y}(y)$ is a local divisor, that is,
there is a homeomorphism $F_y$ from $B_{6s_y}(y)$ to a tangent cone $(C_x)_y$ such that $F_y(\mathcal{S}_x\bigcap B_{6 s_y}(y))$ is a divisor.
Put
$$\hat{\mathcal S}_x\,=\,\mathcal{S}_x\setminus \bigcup_{y\in \mathcal{S}_{2n-2}(C_x)\setminus \mathcal{S}_{2n-4}(C_x)}B_{s_y}(y).$$
Clearly, it is a closed subset of $\mathcal{S}_{2n-4}(C_x)$. Since the Hausdorff dimension of $\mathcal{S}_{2n-4}(C_x)$ is strictly small than $2n-3$,
there is a finite cover of $\hat{\mathcal S}_x\cap B_{{\bar\epsilon}^{-1}}(o,\omega_x)$ by balls $B_{r_a}(q_a)$ such that
$$\Sigma \,r_\alpha^{2n-3}\,\leq\, \bar\epsilon.$$
So we have a cut-off function $\chi$
such that
$$\chi\equiv1 \text{ in }\bigcup B_{r_a}(q_a), \,\,\,\,\int_{C_x}\,|\nabla \chi|^2\,\omega_x^n\,\leq\, \bar\epsilon.$$
We're going to construct a finite cover of $B_{\bar\epsilon^{-1}}(o,\omega_x)\setminus \bigcup B_{r_a}(q_a)$.

Now we cover $\mathcal{S}_x\bigcap (B_{\bar\epsilon^{-1}}(o,\omega_x)\setminus \bigcup B_{r_\alpha}(q_\alpha))$ by finitely many balls
$B_{s_b}(y_b)$. Let $\zeta$ be the cutoff function in (\ref{eta}). For each ball $B_{s_b}(y_b)$, we abbreviate $F_b = F_{y_b}$ and put
$$\gamma_{\bar\epsilon,b}(x')
\,=\,\zeta\left(\log\left(-\log\left( \frac{|f_b|(F_b(x'))}{\hat\epsilon}\right)\right)\right),$$
where $\hat\epsilon = \hat\epsilon(\bar\epsilon)$ and $f_b$ is the defining function of the divisor $F_y(\mathcal{S}_x\bigcap B_{6 s_b}(y_b))$ in $B_{6 s_b}(y_b)$.
For each $B_{6s_b}(y_b)$, we choose a smooth function $\xi_b$ with support in $ B_{1+\bar\epsilon^{-1}}(o,\omega_x)$ satisfying:

\vskip 0.1in
\noindent
i) $0\le \xi_b\le 1$, $|\nabla \xi_b|\le s_b^{-1}$;
\vskip 0.1in
\noindent
ii) ${\rm supp}(\xi_b)\subset  B_{6s_b}(y_b)$;
\vskip 0.1in
\noindent
iii) $\sum_b\,\xi_b\equiv 1$, ~ on $\mathcal{S}_x\bigcap (B_{\bar\epsilon^{-1}}(o)\setminus \bigcup B_{r_a}(q_a))$.
\vskip 0.1in
Then $\{\xi_b\}$ and $1-\sum_b\xi_b$ form a partition of unit for the cover $\{B_{6 s_b}(y_b),\,\bigcup B_{r_a}(q_a)\}$ on $B_{1+\bar\epsilon^{-1}}(o)$. Then we can patch these cut-off functions together to get $$\gamma_{\bar\epsilon}(x')\,=\,(1-\chi)\,(1-\sum\, \xi_b+\sum \xi_b\,\gamma_{\bar\epsilon,b}(x')).$$
As in the proof of Lemma 5.8 in \cite{Ti15}, if $\hat\epsilon$ is sufficiently small, we have
$$\int_{B_{{\bar\epsilon}^{-1}}(o,\omega_x)} |\nabla\gamma_{\bar\epsilon}|^2\,\leq\, \bar\epsilon.$$
Since the metrics converges smoothly in $B_{\bar\epsilon^{-1}}(o, \omega_x)\setminus \bigcup B_{r_\alpha}(q_\alpha)\bigcup B_{s_b}(y_b)$,
we get the cut-off function required.

\end{proof}
\begin{cor}\label{klt}
The Gromov-Hausodrff limit $(X,d)$ of $(M,\omega_j)$ is homeomorphic to a klt log Fano variety with a weak conic K\"{a}hler-Einstein metric.
\end{cor}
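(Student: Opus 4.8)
The plan is to convert the partial $C^0$-estimate of Proposition \ref{partial} into the stated algebro-geometric conclusion, following the scheme of \cite{Ti15} and \cite{CDS}. First I would use the uniform lower bound $\rho_\ell(M,\omega_j)\ge c_\ell$ to produce Kodaira-type embeddings. Fixing an $L^2$-orthonormal basis $\{s^j_0,\dots,s^j_N\}$ of $H^0(M,L^\ell)$ with respect to $\omega_j$ and the induced Hermitian metric on $L^\ell$, the positivity of the Bergman kernel guarantees that these sections have no common zero, so they define holomorphic maps
$$\Phi_j:M\,\to\,\mathbb{CP}^N,\qquad \Phi_j(x)\,=\,[s^j_0(x):\cdots:s^j_N(x)].$$
By the gradient estimate of Lemma \ref{gradient} together with the Bergman bound, the $\Phi_j$ are uniformly Lipschitz from $(M,\omega_j)$ to $(\mathbb{CP}^N,\omega_{FS})$ and, on the regular part, separate points and tangents at a definite scale. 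Hence the images $\Phi_j(M)$ are projective subvarieties of uniformly bounded degree, and after passing to a subsequence they converge as analytic cycles to a limit $X'\subset\mathbb{CP}^N$, which by Bishop's theorem is again an irreducible projective subvariety.

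Second, I would identify $X'$ with the Gromov-Hausdorff limit $(X,d)$. On the regular part $\mathcal{R}$ the metrics converge smoothly by Lemma \ref{smtangent}, so $\Phi_j$ converge to a holomorphic embedding of $\mathcal{R}$ onto a Zariski-open smooth locus of $X'$; across the singular set the local models $\CC_\beta\times\CC^{n-1}$ furnished by Lemma \ref{limitdivisor} and Lemma \ref{local}, combined with the uniform potential bound $|\varphi_j|\le C(n)\rho^2$ of Proposition \ref{map} and the chart construction of Lemma \ref{chart}, show that the limiting map extends to a homeomorphism $(X,d)\to X'$. In particular $X:=X'$ is a normal variety; the Hermitian metrics on $L$ induced by $\omega_j$ converge to one on a limit $\QQ$-line bundle $L_\infty$ whose curvature is the limit metric, so $L_\infty$ is ample and, with the limit divisors $D_i^\infty$ of Lemma \ref{smtangent}, the pair $(X,\sum_i(1-\beta^i_\infty)D_i^\infty)$ is log Fano.

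Third, I would check the klt property and the weak Einstein equation locally. In the charts of Lemma \ref{chart} the potential $\varphi_\infty$ is bounded with $C^{-1}\omega_{Euc}\le\sqrt{-1}\,\partial\bar\partial\varphi_\infty\le C\,\omega_{Euc}$, and the limiting Monge-Amp\`ere equation takes the form
$$(\sqrt{-1}\,\partial\bar\partial\varphi_\infty)^n\,=\,e^{-\varphi_\infty}\,\prod_i\,|f_i|^{2(\beta^i_\infty-1)}\,|U|^2,$$
with $f_i$ defining $D_i^\infty$ and $U$ non-vanishing holomorphic. Since every $\beta^i_\infty\ge\mu>0$, the exponent $2(\beta^i_\infty-1)>-2$ makes the right-hand density locally $L^p$ for some $p>1$; this is precisely the condition that $(X,\sum_i(1-\beta^i_\infty)D_i^\infty)$ be klt, and the same equation is the weak (pluripotential) conic K\"ahler-Einstein equation. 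The volume convergence of \cite{Co} then identifies the metric completion of this weak solution with $(X,d)$.

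The hard part will be the second step, the matching of the analytic limit $X'$ with the metric limit $(X,d)$: one must exclude collapsing of the Bergman embedding along the singular set and verify that the finitely many local charts patch into a single normal variety. This is exactly where the codimension control of the singular strata from Lemma \ref{angle} and the stratification, together with the point-separation encoded in the partial $C^0$-estimate, enter essentially; the remaining klt and weak-Einstein assertions are local and comparatively routine.
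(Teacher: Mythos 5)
Your proposal follows essentially the same route as the paper: embed via an orthonormal basis of $H^0(M,L^\ell)$ using the partial $C^0$-estimate, pass to the Chow/cycle limit, identify it with the Gromov--Hausdorff limit by the arguments of \cite{DS} and \cite{Ti15}, and then verify the weak conic K\"ahler--Einstein equation and the klt condition locally from the bounded-potential Monge--Amp\`ere equation in the charts of Lemma \ref{limitdivisor}, with the factors for $\beta^i_j\to 1$ dropping out in the limit. The only cosmetic difference is that you phrase the klt condition via local $L^p$-integrability of the density while the paper states it as integrability of $\prod_{i=l+1}^k|f_i|^{2(\beta^i_\infty-1)}|\sigma_\infty|^2$, which is the same point.
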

\begin{proof}
By Proposition \ref{partial}, we can embed $(M,\omega_j)$ into certain $\mathbb{CP}^N$ by using an orthonormal basis of $H^0(M, L^\ell)$:
$$\iota_j: M\,\rightarrow \,\mathbb{CP}^N,$$
moreover, $\iota_j$ is uniformly Lipschitz.
Denote the Chow limit of $\iota_j(M, D_1,...,D_k)$ by $(M_\infty, D_1^\infty,...,D_k^\infty)$. Then we have a map:
$$\iota_\infty: (X,d)\rightarrow M_\infty.$$ As showed in [DS][Ti15], $\iota_\infty$ is a homeomorphism and $M_\infty$ is a normal variety and $D_i^\infty(1\leq i\leq k)$ are Weil divisors in $M_\infty$. Now we consider the metric $(\iota_\infty)_*\omega_\infty$ on $M_\infty$.
Choose a local coordinate chart $(U,z_\alpha)$ around any smooth point $p\in M_\infty$ such that $O(1)$ is a trivial bundle and $D_i^\infty\bigcap U$ is defined by $f_i=0$. We can also regard $U$ as an open subset in ${\mathbb C}^n$. By the arguments in the proof of Lemma \ref{limitdivisor}, we can find holomorphic homeomorphisms
$$F_j: V_j\subset M\,\rightarrow \,U\,\subset \mathbb{C}^n$$
such that $(F_j)_*\omega_j$ converge to $\omega_\infty$ smoothly outside $\bigcup_{i=1}^kD_i^\infty$. Now we can write $(F_j)_*\omega_j=\sqrt{-1}\,\partial\bar{\partial} \varphi_j$ on $U$ such that for some $C>0,$
$$|\varphi_j|\,\leq\, C,\,\,\,\, \sqrt{-1}\,\partial\bar{\partial}\,\varphi_j\,\geq\,C^{-1}\,\omega_{Euc}.$$
Also, $\varphi_j$ satisfies
$$(\sqrt{-1}\,\partial\bar{\partial}\,\varphi_j)^n\,=
\,e^{-t_j\varphi_j}\Pi_{i=1}^k|f^j_i|^{2(\beta_j^i-1)}\,|\sigma_j|^2,$$ where $\sigma_j$ is a non-vanishing holomorphic section in $\Gamma(U, (F_j)_{*}K_{M})$ and $f^j_i$  is a defining function of $F_j(D_i)$.
Since $\beta_j^i\rightarrow 1 (1\leq i\leq l)$, $\varphi_j$ converges to bounded function $\varphi_\infty$ on $U$ satisfying
\begin{align}\label{wke}
(\sqrt{-1}\,\partial\bar{\partial}\,\varphi_\infty)^n\,
=\,e^{-t_\infty\phi_\infty}\Pi_{i=l+1}^k|f_i|^{2(\beta_\infty^i-1)}
|\sigma_\infty|^2,
\end{align}
where $\sigma_\infty$ is a non-vanishing holomorphic section in $\Gamma(U, K_{M_\infty})$.
So $\omega_\infty$ is a weakly conic K\"{a}hler-Einstein metric. For any point $z\in M_\infty$, we can find a Zariski open subset $V$ so that (\ref{wke}) holds in the regular part of $V$.
It is easy to see that
$$\Pi_{i=l+1}^k|f_i|^{2(\beta_\infty^i-1)}|\sigma_\infty|^2$$
is integrable over $V$. Thus, it follows that $(M_\infty, \sum_{i=l+1}^k(1-\beta^i_\infty)D_i^\infty)$ is a klt log pair.
\end{proof}
\section{Existence of conic K\"{a}hler-Einstein metrics}
In this section, we will prove our main theorem. First, we recall the definition of log $K-$polystable.
\begin{defi}
A normal test configuration $(\mathcal{X},\mathcal{D},\mathcal{L})$ for $(M,D,L)$, where $D=\Sigma_{i=1}^k\,(1-\beta^i)D_i$, consists of:

\vskip 0.1in
\noindent
i) a normal log-pair $(\mathcal{X},\mathcal{D})$ with a $C^*$-equivariant morphism $\pi: (\mathcal{X},\mathcal{D})\rightarrow \mathbb{C}$,

\vskip 0.1in
\noindent
ii)$\mathcal{L}$ is an equivariant $\pi$-ample $\mathbb{Q}$-line bundle such that
$$(\mathcal{X}_t,\mathcal{D}_t,\mathcal{L}_t)\cong (M,D, L),\,\,\,\forall t\neq 0.$$
\vskip 0.1in
\noindent
Let $(\bar{\mathcal{X}},\bar{\mathcal{D}},\bar{\mathcal{L}})$ be the natural equivariant compactification of $(\mathcal{X},\mathcal{D},\mathcal{L})$. The log-CM weight is defined by
$$w_{lcm}(\mathcal{X},\mathcal{D},\mathcal{L})\,=\,\frac{n\, \bar{\mathcal{L}}^{n+1}\,+\,(n+1)\,\bar{\mathcal{L}}^n\,(K_{\bar{\mathcal{X}}/\mathbb{CP}^1}\,+\,\bar{\mathcal{D}})}{(n+1)\,L^n}.$$

We say $(M,D,L)$ log K-semistable if
$w_{lcm}(\mathcal{X},\mathcal{D},\mathcal{L})\geq 0$ for any normal test configuration $(\mathcal{X},\mathcal{D},\mathcal{L})$.

We say $(M,D,L)$ log K-polystable if $w_{lcm}(\mathcal{X},\mathcal{D},\mathcal{L})\geq 0$ for any normal test configuration $(\mathcal{X},\mathcal{D},\mathcal{L})$,
and the equality holds if and only if $(\mathcal{X},\mathcal{D},\mathcal{L})\cong (X,D,L)\times \mathbb{C}.$
\end{defi}
If the central fiber is a klt log Fano variety $(M_0,\Sigma_{i=1}^k(1-\beta^i)D^0_i, L_0)$, we have an embedding of $M_0$ into $\mathbb{CP}^N$ using $H^0(M_0,L^\ell)$ for a large $\ell$. Let $\bar\omega_0$ be the restriction of the Fubini-Study metric:
 $$ \bar\omega_0\,=\,\frac{1}{\ell}\,\iota^*\omega_{FS},\,\,\,{\rm where}\,\,\iota: M_0\rightarrow \mathbb{CP}^N.$$
 Then as in \cite{DT}, we have
$$w_{lcm}(\mathcal{X},\mathcal{D},\mathcal{L})\,=\,-\,\int_{M_0} \,\theta_X({\rm Ric}(\bar\omega_0)\,-\,\bar\omega_0)\wedge\bar\omega_0^{n-1}\,+\,\Sigma_{i=1}^k\,(1-\beta^i)\,\int_{D^0_i}\,\theta_X\,\bar\omega_0^{n-1},$$
where $X$ generates the $\mathbb{C}^*$-action on $M_0$ and $\theta_X$ is the potential of $X$. As before, we will
denote $\Sigma_{i=1}^k\,(1-\beta^i)\,D^0_i$ by $(1-\beta)\,D^0$ and the righthand side above by
$Fut_{(M_0, (1-\beta)D^0)}(X).$

Now we prove our main theorem.
\begin{theo}
If $(M,(1-\beta)D)$ is log K-polystable, then there exists a conic K\"{a}hler-Einstein metric.
\end{theo}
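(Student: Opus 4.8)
The plan is to run Aubin's continuity method in the fixed class $2\pi c_1(L)$, solving
$$\mathrm{Ric}(\omega_t)\,=\,t\,\omega_t\,+\,(1-t)\,\omega_0\,+\,2\pi\,\sum_{i=1}^k(1-\beta^i)\,D_i,\qquad \omega_t\,\in\,2\pi\,c_1(L),$$
where $\omega_0\in 2\pi c_1(L)$ is a fixed conic K\"ahler metric, and pushing $t$ from $0$ toward $1$; at $t=1$ the twist term drops out and $\omega_1$ is the desired conic K\"ahler--Einstein metric. Let $I$ be the set of $t\in[0,1]$ for which this equation is solvable. At $t=0$ the equation prescribes the Ricci current and is solvable by the conic Calabi--Yau theory together with the regularity established in Section 2, so $0\in I$. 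For $t<1$ the twist $(1-t)\omega_0>0$ makes the associated twisted Ding functional coercive, which yields uniform $C^0$-bounds and, through the implicit function theorem using the eigenvalue estimate of Lemma \ref{eigenvalue} (which gives invertibility of the linearization $\Delta_{\omega_t}+t$ away from the holomorphic-vector-field locus), both openness and closedness on $[0,1)$; hence $[0,1)\subset I$.

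The whole difficulty is therefore concentrated at the endpoint: I must show that $t_j\nearrow 1$ with solutions $\omega_j$ forces existence at $t=1$. Here I invoke the machinery of Sections 3 and 4. Since $\mathrm{Ric}(\omega_j)\ge t_j\omega_j$, Lemma \ref{eigenvalue} bounds the Sobolev constant uniformly, and the partial $C^0$-estimate of Proposition \ref{partial} provides a fixed $\ell$ with $\rho_\ell(M,\omega_j)\ge c_\ell>0$. By Corollary \ref{klt} the Gromov--Hausdorff limit $(X,d)$ of $(M,\omega_j)$ is homeomorphic to a klt log Fano variety $(M_\infty,\sum_i(1-\beta^i)D_i^\infty)$ equipped with a weak conic K\"ahler--Einstein metric $\omega_\infty$ satisfying $\mathrm{Ric}(\omega_\infty)=\omega_\infty+2\pi\sum_i(1-\beta^i)D_i^\infty$, all of whose cone angles $2\pi\beta^i$ persist because the $\beta^i$ are held fixed.

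It remains to identify $(M_\infty,D^\infty)$ with $(M,D)$. Embedding $M$ into $\mathbb{CP}^N$ by an orthonormal basis of $H^0(M,L^\ell)$ and taking the Chow limit of $\iota_j(M,D_1,\dots,D_k)$, I obtain a normal log test configuration $(\mathcal{X},\mathcal{D},\mathcal{L})$ for $(M,D,L)$ with central fibre $(M_\infty,D^\infty,L_\infty)$, whose one-parameter subgroup determines a holomorphic vector field $X$ on $M_\infty$; the Donaldson--Sun comparison of the metric and algebraic structures guarantees that this degeneration is $\mathbb{C}^*$-equivariant and respects the divisors $D_i^\infty$. Substituting $\mathrm{Ric}(\omega_\infty)-\omega_\infty=2\pi\sum_i(1-\beta^i)D_i^\infty$ into the log-CM weight formula
$$w_{lcm}(\mathcal{X},\mathcal{D},\mathcal{L})\,=\,-\int_{M_\infty}\theta_X\big(\mathrm{Ric}(\omega_\infty)-\omega_\infty\big)\wedge\omega_\infty^{n-1}\,+\,\sum_{i=1}^k(1-\beta^i)\int_{D_i^\infty}\theta_X\,\omega_\infty^{n-1}$$
makes the divisorial contribution cancel the explicit divisor term, so $w_{lcm}=0$; this is the familiar vanishing of the (log) Futaki invariant on a K\"ahler--Einstein fibre. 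Log $K$-polystability then forces $(\mathcal{X},\mathcal{D},\mathcal{L})\cong(M,D,L)\times\mathbb{C}$, hence $(M_\infty,D^\infty)\cong(M,D)$, and $\omega_\infty$ is a genuine conic K\"ahler--Einstein metric on $M$ with the prescribed angles, so $1\in I$. I expect the two hardest points to be the construction of the test configuration from the Gromov--Hausdorff limit---matching the metric and algebraic data and ensuring $\mathbb{C}^*$-equivariance of the log structure, which rests squarely on Proposition \ref{partial}---and the correct invocation of the equality case of polystability; the persistence of genuine cone singularities in $M_\infty$ (so that one works throughout with the log-CM weight rather than its smooth analogue) is the feature that distinguishes this argument from the classical Fano case.
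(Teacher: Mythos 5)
Your overall architecture (continuity method, compactness via partial $C^0$-estimate, identification of the limit via log $K$-polystability) matches the paper's, but there is a genuine gap at the step where you claim $[0,1)\subset I$. You assert that for every $t<1$ the positive twist $(1-t)\omega_0$ makes the twisted Ding functional coercive, so that openness and closedness are automatic on $[0,1)$ and the only difficulty is the endpoint. This is false: coercivity of the twisted functional along Aubin's path is not implied by positivity of the twist form. If it were, every (log) Fano manifold would have greatest Ricci lower bound equal to $1$, which already fails for $\mathbb{CP}^2$ blown up at a point. The paper only obtains coercivity for $t\in(0,t_0)$ with $t_0$ possibly small (Lemma \ref{small}), and the entire closedness argument — Gromov--Hausdorff limits, Corollary \ref{klt}, reductivity of $Aut^0$ of the limit, Luna's slice theorem to produce a test configuration, and the affine-in-$t$ Futaki function $f(t)$ whose sign is controlled by log $K$-polystability — must be run at \emph{every} accumulation point $T\in(0,1]$ of $I$, not just at $T=1$. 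By deferring all difficulty to $t=1$ you have skipped the main body of the proof.

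A second, related problem is your choice of a smooth twist $(1-t)\omega_0$ rather than the paper's conic twist $\frac{1-t}{\lambda}[E]$ with $E\in|\lambda L|$ smooth. The conic twist is not cosmetic: it keeps every $\omega_t$ a genuine conic K\"ahler--Einstein metric (so the compactness and partial $C^0$ machinery of Sections 3--4 applies verbatim along the whole path), and it makes the twisted Futaki invariant
$f(t)=Fut_{M_\infty,(1-\beta)D_\infty}(X)-(1-t)\int_M\theta_X\,\omega^n+\frac{1-t}{\lambda}\int_E\theta_X\,\omega^{n-1}$
an affine function of $t$ whose value at $t=1$ is exactly the log-CM weight; the argument ``$f>0$ for small $t$, $f(T)=0$, hence $f(1)\le 0$, contradicting polystability unless $X=0$'' is what identifies $M_\infty$ with $M$ at an interior $T$. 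With a smooth twist neither feature is available without additional input. Finally, even at $T=1$ your identification step glosses over two necessary ingredients the paper proves in detail: the reductivity of $Aut^0(M_\infty,(1-\beta)D_\infty)$ (a Matsushima-type theorem on the singular limit, needed before Luna's slice lemma can produce the test configuration from the Chow limit), and the justification of the vanishing of the log Futaki invariant on the singular central fiber, which requires the integration-by-parts lemmas for the weak conic K\"ahler--Einstein metric rather than a formal substitution into the log-CM weight formula.
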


\begin{proof}

Choose a large integer $\lambda$ large enough such that there is a smooth divisor $E\in|\lambda L|$. Fix a hermitian metric $h_E$ on $\lambda L$ such that
$$R(h_E)\,=\,\lambda\,\omega_0.$$
Consider the following continuity method through conic K\"ahler-Einstein metrics:
$${\rm Ric}(\omega_t)\,=\,t\omega_t\,+\,\frac{1-t}{\lambda}\,[E]\,+\,(1-\beta)\,[D].$$
It is equivalent to the following complex Monge-Amp\'{e}re equation:
\begin{align}\label{continuity}
(\omega_0\,+\,\sqrt{-1}\,\partial\bar{\partial}\,\phi_t)^n\,=\,e^{-t\phi_t}\,|s_E|^{-2\frac{1-t}{\lambda}}_{h_E}\,|s_D|^{-2(1-\beta)}_{h_D}\,\Omega,
\end{align}
where $\Omega$ is the volume form given by
\begin{equation}\label{tian5.1}
{\rm Ric}(\Omega)\,=\,\omega_0\,+\,(1-\beta)\,R(h_D),\,\,\,\,\int_M\,\Omega \,=\,\int_M\,\omega_0^n\,=\,V.
\end{equation}
The corresponding Lagrangian, also often referred as Ding functional, is
$$F_t\,=\,J(\phi)\,-\,\int_M\,\phi\,\omega_0^n\,-\,\frac{1}{t}\,\log \left(\frac{1}{V}\,\int_M \,e^{-t\phi}\,|s_E|^{-2\frac{1-t}{\lambda}}_{h_E}\,|s_D|^{-2(1-\beta)}_{h_D}\,\Omega\right).$$
\begin{lem}\label{small}
There exists a positive number $t_0=t_0(M,\lambda)$ such that for $t\in (0,t_0)$, $F_t$ is coercive.
\end{lem}
\begin{proof}
For $\phi\in PSH(M,\omega_0)$ with $sup \,\phi=0$, we have
$$\int_M\,e^{-\delta\phi}\,\Omega\,\leq\, A$$
for some $\delta\leq \delta_0=\delta_0(M,\omega_0)$. We may assume that $\lambda\geq 2$, then there is $p>1$ such that
$$\left (\int_M \,\left [|s_E|^{-2\frac{1-t}{\lambda}}_{h_E}\,|s_D|^{-2(1-\beta)}_{h_D}\right]^p\,\Omega\right )^{\frac{1}{p}}\,\leq\, C_p.$$
Taking $t_0=\frac{p-1}{p}\,\delta_0$ and using H\"{o}lder's inequality, we have
$$\int_M \,e^{-t\phi},|s_E|^{-2\frac{1-t}{\lambda}}_{h_E}\,|s_D|^{-2(1-\beta)}_{h_D}\,\Omega\,\leq\, A^{\frac{p-1}{p}}\,C_p.$$
It follows that for $t\in (0,t_0)$,
$$F_t\,\geq\, J\,-\,C.$$
\end{proof}
From this, we know that $Aut^0(M,E+D)=1$. Now put
$$I\,=\,\{\,t\in[0,1]\,|\,(\ref{continuity})\text{ is  solvable}\,\}.$$ By Lemma \ref{small} and Theorem A in \cite{GP}, we know that $[0,t_0)\subset I$.
Since there is no non-trivial holomorphic vector field tangential to $E$ and $D$, $I$ is open.
We are going to show that $I$ is closed using log K-polystability.

Assuming that $(M,\omega_{t_i}, \frac{1-t_i}{\lambda}E +(1-\beta)D)$ converges to a metric space $(X,d)$ in the Gromov-Huasdroff topology as $t_i\in I \rightarrow T$. From Corollary \ref{klt}, for some large integer $\ell$, we can embed $M$ into $\mathbb{CP}^n$ using orthonormal bases of $H^0(M,L^\ell)$ with respect to $\omega_{t_i}$, and we can also show that $(X,d)$ is homeomorphic to the Chow limit $(M_\infty,\omega_\infty, \frac{1-T}{\lambda}E_\infty+(1-\beta)D_\infty)$ which is a klt log Fano variety with weak conic K\"{a}hler-Einstein metric: .
\begin{lem}\label{reductive}
$Aut^0(M_\infty, \frac{1-T}{\lambda}E_\infty+(1-\beta)D_\infty)$ is reductive.
\end{lem}
If $T=1$, $Aut^0(M_\infty, \frac{1-T}{\lambda}E_\infty+(1-\beta)D_\infty)$ is understood as
$Aut^0(M_\infty, (1-\beta)D_\infty)$.
\begin{proof}
We will use the method for proving Lemma 6.9 in \cite{Ti15}. We will prove that any holomorphic
field in the Lie algebra $\eta_\infty$ of $Aut^0(M_\infty, \frac{1-T}{\lambda}E_\infty+(1-\beta)D_\infty)$ is a complexification of a Killing field on $(M_\infty,\omega_\infty).$

We only consider $T<1$, since the proof for $T=1$ is identical.
Now $M_\infty$ is embedded in $\mathbb{CP}^N$ by $H^0(M_\infty, L^\ell)$. If $X\in \eta_\infty$ is a holomorphic vector field in $\mathbb{CP}^N$, then there is a bounded function $\theta_\infty$ satisfying
$$i_X\omega_\infty\,=\,\sqrt{-1}\,\bar\partial \,\theta_\infty,\,\,\,\, \Delta \,\theta_\infty\,=\, -\,T\,\theta_\infty,\,\,
\text{ in } M_\infty\setminus \mathcal{S}.$$
This is exactly the same as Lemma 6.9 in \cite{Ti15}.
Putting $\theta_\infty=u+\sqrt{-1}v$, we are going to show that $\nabla u$ corresponds to a Killing vector field. At first,
we show that $u$ is the limit of eigenfunctions $u_j$ on $(M,\omega_j)$ such that $\Delta \,u_j\,=\,-\,\lambda_j u_j$ with
$\lambda_j\rightarrow T$.
Denote the set of such limit eigenfunctions by $\tilde\Lambda_T$ which is a subset of $\Lambda_T$ consisting of all bounded eigenfunctions with eigenvalue $T$. If $\tilde\Lambda_T\neq \Lambda_T$, we can find $u\in \Lambda_T $ such that
$$\int_{M_\infty}\,u^2\,\omega_\infty^n\,=\,1,\,\,\,\, \int_{M_\infty}\,u u_a\,\omega_\infty^n\,=\,0,$$
where $\{u_a\}_{1\leq a\leq k}$ is an orthonormal basis of $\tilde\Lambda_T$.
Because $\mathcal{S}$ is a subvariety which is contained in a divisor, as in Lemma \ref{moser}, we have a cut-off function in $M_\infty$ satisfying
$$\int_{M_\infty}\,|\nabla \gamma_\epsilon|^2\,\omega_\infty^n\,\leq\, \epsilon.$$
On the support $K_\epsilon$ of $\gamma_\epsilon$, by Lemma \ref{angle}, we have $\phi_j: K_\epsilon \rightarrow (M,\omega_j)$ such that $\phi_j^*\omega_j\rightarrow\omega_\infty$ smoothly. So by taking a subsequence if necessary, we can take $\epsilon_j\rightarrow 0$ such that
$u_j\,=\,(\gamma_{\epsilon_j}\,u)\circ \phi^{-1}_{j}$ satisfy
$$\lim_{j\rightarrow\infty}\,\int_M\, |\nabla u_j|^2\,\omega_{j}^n\,=\,T, \,\,\,\,\lim_{j\rightarrow\infty}\,\int_M\, u_j^2\,\omega_{j}^n\,=\,1.$$
For each $a$, there are eigenfunctions $u_{a,j}$ of $(M,\omega_j)$ which converge to $u_a$. So if $j$ large enough,
$u_j,u_{1,j},...,u_{k,j}$ is a $k+1$ dimensional subspace. We can find an eigenfunction $u_{0,j}$ orthogonal to $u_{a,j}(1\leq a\leq k)$ with eigenvalue not bigger than $T+\nu_j$ with $\nu_j\rightarrow 0$.
By Lemma \ref{eigenvalue}, we know that the eigenvalue is not less than $t_{j}$. So $u_{0,j}$ will converge to an element in $\tilde\Lambda_T$. It leads to
a contradiction. Now we know that $u$ is the limit of eigenfunctions $u_j$. By (\ref{bochner}), we have
$$\int_M \,|\bar\nabla\bar\partial u_j|^2\,\omega_{j}^n\, \leq\, (\lambda_j-t_{j})\,\int_M \,|\nabla u_j|^2\,\omega_{j}^n\,\rightarrow\, 0.$$
It follows that $\bar\nabla\bar\partial u\,=\,0$ which means that $Y=\nabla u$ is a Killing vector field. As showed in Lemma 6.9 in \cite{Ti15}, $Y$ can be extended to a holomorphic vector field on $\mathbb{CP}^n$. So we have proved that $\eta_\infty$ is the complexification of the Killing vector fields.
\end{proof}

There is another approach to establishing the reductivity by using Theorem 5.1 in \cite{BBEGZ}.\footnote{In its new version posted in 2016,
an appendix was added to fill in necessary arguments for a complete proof.}

By Luna's slice lemma (cf. \cite{D}), there is a test-configuration of $(M, (1-\beta)D)$ with $(M_\infty, (1-\beta)D_\infty)$ as the central fiber. On $(M_\infty,\omega_\infty)$, the following Futaki invariant vanishes:
$$Fut_{M_\infty,(1-\beta)D_\infty}(X)\,-\,(1-T)\,\int_M\,\theta_X\,\omega^n\,+\,\frac{1-T}{\lambda}\,\int_E\,\theta_X\,\omega^{n-1}\,=\,0.$$
We will adapt the argument in \cite{DT} to prove this. Using the embedding of $H^0(M,L^\ell)$, we know that
$$\omega_j\,=\,\frac{1}{\ell}\,\omega_{FS}\,-\,\frac{1}{\ell}\,\sqrt{-1}\,\partial\bar\partial\,\log \rho_\ell(M,\omega_j).$$
By Proposition \ref{partial} and gradient estimate, we know that $\rho_\ell(M,\omega_j)$ is uniformly Lipschitz.
Denoting the limit of $\rho_\ell(M,\omega_j)$ by $\rho_\ell(M_\infty,\omega_\infty)$, we can also choose a bounded hermitian metric $h_\infty$ on the limit $\mathbb{Q}$-line $L_\infty=\frac{1}{\ell}\textsl{O}(1)$ with $R(h_\infty)=\omega_\infty$ in the regular part.
Then we have
\begin{align}
\rho_\ell(M_\infty,\omega_\infty)\,=\,\sum_{i=0}^N\, |s_i|_{h_\infty^{\otimes l}}^2 \,\,\text{ for  an orthonormal basis } \{s_i\}. \notag
\end{align}
For simplicity, denoting
$$\bar\omega_0\,=\,\frac{1}{\ell}\,\omega_{FS}, \,\,\,\,\varphi_\infty\,=\,-\,\frac{1}{\ell}\,\log \rho_\ell(M_\infty,\omega_\infty),$$
we define
$$\omega_s\,=\,\bar\omega_0\,+\,s\,\sqrt{-1}\,\partial \bar\partial\, \varphi_\infty\,=\,s\,\omega_\infty\,+\,(1-s)\,\bar\omega_0.$$
The log Ricci potential $f_0$ of $\bar\omega_0$ is given by
$$\sqrt{-1}\,\partial\bar\partial \,f_0\,=\,{\rm Ric}(\bar\omega_0)\,-\,T\,\bar\omega_0\,-\,\frac{1-T}{\lambda}\,E_\infty\,-\,(1-\beta)\,D_\infty.$$
We know that
$$Fut_{M_\infty,(1-\beta)D_\infty}(X)\,-\,(1-T)\,\int_M\,\theta_X\,\omega^n\,+\,
\frac{1-T}{\lambda}\,\int_E\,\theta_X\,\omega^{n-1}\,=\,\int_{M_\infty}\,X(f_0)\,\omega_0^n.$$
Put
$$f_s\,=\,-\,\log\left(\frac{\omega_s^n}{\bar\omega_0^n}\right)\,-\,T s\,\varphi_\infty\,+\,f_0$$
which is the log Ricci potential of $\omega_s$. Denoting by $\upsilon$ the potential of $X$ with respect to $\bar\omega_0$:
$$i_X\bar\omega_0\,=\,\sqrt{-1}\,\bar\partial\, \upsilon, $$
then we have
$$i_X\omega_s\,=\,\sqrt{-1}\,\bar\partial\,\theta_s,\,\, \text{ for }
\theta_s\,=\,(1-s)\,\upsilon\,+\,s\,\theta_\infty\,=\,\upsilon\,+\,s\,X(\varphi_\infty).$$
From Lemma \ref{reductive}, we know that $|X|_{\omega_\infty}=|\nabla \theta_\infty|$ is bounded.
Since
$$\omega_0\,\leq\, C \omega_\infty,\,\,\, s\,\omega_\infty\,\leq\,\omega_s\,\leq \,C\,\omega_\infty,$$
we know that $|X|_{\omega_s}\leq C$ and $|\Delta_\infty\,\upsilon|\leq C$. Consequently, we have
$|\Delta_\infty\theta_s|\leq C$ and $|\Delta_s\theta_s|\leq \frac{C}{s}.$
Now $$X\left(\log\left(\frac{\omega_s^n}{\bar\omega_0^n}\right)\right)\,=\,div_s(X)\,-\,div_0(X)\,=\,\Delta_s \theta_s\,-\,\Delta_0 \upsilon$$ is bounded,
so we have
\begin{align}
\int_{M_\infty}\,X(f_s)\,\omega_s^n&=\,
\int_{M_\infty}\,(\Delta_0\,\upsilon\,-\,\Delta_s\,\theta_s)\,\omega_s^n\,-\,Ts\,\int_{M_\infty}\,X(\varphi_\infty)\,\omega_s^n\,+\,
\int_{M_\infty}\,X(f_0)\,\omega_s^n\notag \\
&=\,\int_{M_\infty}\,\Delta_0\,\upsilon\,\omega_s^n\,-\,Ts\,\int_{M_\infty}\,X(\varphi_\infty)\,\omega_s^n\,+\,
\int_{M_\infty}\,X(f_0)\,\omega_s^n \notag.
\end{align}
Taking $s\rightarrow 0$, we have
$$\lim_{s\rightarrow 0}\,\int_{M_\infty}\,X(f_s)\,\omega_s^n\,=\,\int_{M_\infty}\,X(f_0)\,\omega_0^n.$$

We are going to show that the integral
$$\int_{M_\infty}\,X(f_s)\,\omega_s^n$$
is independent of $s$. From $\frac{d}{ds}f_s \,=\,- \Delta_s \varphi_\infty\,-\,T\varphi_\infty$, we have
\begin{align}\label{invariant}
\frac{d}{ds}\int_{M_\infty}\,X(f_s)\,\omega_s^n&=\,\int_{M_\infty}\,[X(-\Delta_s\,\varphi_\infty\,-\,T\varphi_\infty)
\,+\,X(f_s)\,\Delta_s\,\varphi_\infty]\,\omega_s^n\notag\\
&=\,\int_{M_\infty}\,[(\Delta_s\,\theta_s\,+\,X(f_s))\,\Delta_s\,\varphi_\infty\,-\,T\,X(\varphi_\infty)]\,\omega_s^n\notag\\
&=\int_{M_\infty}\,[\Delta_s\,\theta_s\,+\,\theta_s\,+\,T\,X(f_s)]\,\Delta_s\,\varphi_\infty\,\omega_s^n.
\end{align}
The integration by parts is guaranteed by the following two lemmas.
\begin{lem}
$|\nabla \Delta_s \,\varphi_\infty|_{\omega_s}$ is bounded.
\end{lem}
\begin{proof}
Putting $h_s=h_\infty e^{(1-s)\varphi_\infty}$,
we have
$$-\,s\varphi_\infty\,=\,\frac{1}{\ell}\,\log (\sum_{i=0}^N\, |s_i|^2_{h_s}),$$
we also have
$$\sqrt{-1}\,\partial\bar\partial\, \log (\sum_{i=0}^N \,|s_i|^2_{h_s})\,=\,\sqrt{-1}\,\frac{\sum_{i=0}^N\,\langle \nabla s_i,\nabla s_i\rangle}{\sum_{i=0}^N \,|s_i|^2_{h_s}}\,-\,\sqrt{-1}\,\frac{\sum_{i=0}^N\,\langle \nabla s_i,s_i\rangle\sum_{i=0}^N\,\langle s_i,\nabla s_i\rangle}{(\sum_{i=0}^N\, |s_i|^2_{h_s})^2}\,-\,\omega_s.$$
From these, we can deduce that $|\nabla \Delta_s \,\varphi_\infty|_{\omega_s}$ can be bounded by the gradient of $s_i$. The lemma is proved.
\end{proof}
\begin{lem}
If $|\nabla u|_{\omega_s}$, $|\nabla v|_{\omega_s}$ and $|\Delta_s \,u|$ are all bounded, then
$$\int_{M_\infty}\,\langle \nabla u,\nabla v\,\rangle\,\omega_s^n\,=\,-\,\int_{M_\infty}\,v\, \Delta u\, \omega_s^n.$$
\end{lem}
\begin{proof}
Let $\gamma_\epsilon$ be the cut-off function on $M_\infty$ as in Lemma \ref{reductive}. We have
$$0=\int_{M_\infty}\,div(\gamma_\epsilon v \,\nabla u)\,\omega_s^n\,=\,\int_{M_\infty}\,\gamma_\epsilon\,\langle \nabla u,\nabla v\rangle\,\omega_s^n+
\int_{M_\infty}v\,\langle \nabla\gamma_\epsilon ,\nabla u\rangle\,\omega_s^n+
\int_{M_\infty}\,\gamma_\epsilon v\,\Delta_s u\,\omega_s^n.$$
Since $$\int_{M_\infty}\,v\,\langle \nabla\gamma_\epsilon ,\nabla u\rangle\,\omega_s^n\,\leq\,
C\,\int_{M_\infty}\,|\nabla \gamma_\epsilon|^2\,\omega_s^n,$$
the lemma is proved by taking $\epsilon\rightarrow 0.$
\end{proof}
A direct computation shows that $\bar\partial(\Delta_s\theta_s+\theta_s+T X(f_s))=0$, so
$\Delta_s\theta_s+\theta_s+TX(f_s)$ is a bounded holomorphic function which must be constant. From (\ref{invariant}),
we know that
$$\frac{d}{ds}\left(\int_{M_\infty}\,X(f_s)\,\omega_s^n\right)\,=\,0.$$
Since $\omega_\infty$ is a weakly conic K\"{a}hler Einstein metric, $f_1=0$ and we have $$Fut_{M_\infty,(1-\beta)D_\infty}(X)-(1-T)\int_M\,\theta_X\,\omega^n+\frac{1-T}{\lambda}\,\int_E\,\theta_X\,\omega^{n-1}\,=\,0.$$
The vanishing of log Futaki invariant can be also obtained from the log $K$-polystability of singular varieties admitting weakly conic K\"{a}hler Einstein metric (Theorem 4.8 in \cite{Ber}).

Denote $$f(t)\,=\,Fut_{X_\infty,(1-\beta)D_\infty}-(1-t)\,\int_M\,\theta_X\,\omega^n+\frac{1-t}{\lambda}\,\int_E\,\theta_X\,\omega^{n-1},$$
then from Lemma \ref{small}, we know that
 $f(t)>0$ for $t$ small. Since $f(T)$=0, we must have $f(1)\leq 0.$ Because $(M,(1-\beta)D)$ is log K-polystable, $f(1)$ must be equal to $0$ and $X_\infty$ is bi-holomorphic to $M$. It follows that $T\in I$ which means that $I$ is closed. So $I=[0,1]$ and we get the existence of conic K\"{a}hler-Einstein metrics.
\end{proof}

\end{document}